\newtheorem{thm}{Theorem}[section]
\newtheorem{defi}{Definition}[section]
\newtheorem{lem}{Lemma}[section]
\newtheorem{cor}{Corollary}[section]
\newtheorem{rem}{Remark}[section]
\newcommand{\R}{\mathbb{R}}
\numberwithin{equation}{section}
\newcommand{\eps}{\epsilon}
\newcommand{\wto}{\rightharpoonup}
\makeatletter \@addtoreset{equation}{section} \makeatother
\newcounter{const}
\author[V. Georgiev]{Vladimir Georgiev}
\address[V. Georgiev]{
\centerline{Dipartimento di Matematica Universit\'a di Pisa Largo B,}
\centerline{Pontecorvo 5, 56100 Pisa, Italy}
\centerline{Faculty of Science and Engineering, Waseda University 3-4-1,}
\centerline{Okubo, Shinjuku-ku, Tokyo 169-8555 Japan}
\centerline{Institute of Mathematics and Informatics at Bulgarian Academy of Sciences,}
\centerline{Acad. Georgi Bonchev Str., Block 8, 1113 Sofia}}
\author[T. Gou]{Tianxiang Gou}
\address[T. Gou]{
\centerline{School of Mathematics and Statistics, Xi’an Jiaotong University,}
\centerline{710049, Xi’an, Shaanxi, China}}
\subjclass[2010]{35A15, 35Q55}
\keywords{Anisotropic nonlinear Schr\"odinger equations, Well-posedness, Standing waves, Orbital stability/instability}
\email{georgiev@dm.unipi.it}
\email{tianxiang.gou@xjtu.edu.cn}
\title[Anisotropic 4NLS]{Solutions for fourth order anisotropic nonlinear Schr\"odinger equations in $\R^2$}
\thanks{{\it Acknowledgments}. V. Georgiev was partially supported by Gruppo Nazionale per l’Analisi Matematica project (2023/2024) "Modelli Nonlineari in presenza di interazioni puntuali", by the project PRIN 2020XB3EFL with the Italian Ministry of Universities and Research, by Institute of Mathematics and Informatics, Bulgarian Academy of Sciences, by Top Global University Project, Waseda University and the Project PRA 2022 85 of University of Pisa. T. Gou was supported by the National Natural Science Foundation of China (No. 12101483) and the Postdoctoral Science Foundation of China (No. 2021M702620). Part of work has been carried out when T. Gou visited Department of Mathematics, University of Pisa, whose hospitality he gratefully acknowledges.}
\thanks{{\it Conflict of interest statement}. The authors declare that there is no conflict of interests.}
\begin{document}

\begin{abstract}
In this paper, we consider solutions to the following fourth order anisotropic nonlinear Schr\"odinger equation in $\R \times \R^2$,
$$
\left\{
\begin{aligned}
&\textnormal{i}\partial_t\psi+\partial_{xx} \psi-\partial_{yyyy} \psi +|\psi|^{p-2} \psi=0, \\
&\psi(0)=\psi_0 \in H^{1,2}(\R^2),
\end{aligned}
\right.
$$
where $p>2$. First we prove the local/global well-posedness and blowup of solutions to the Cauchy problem for the anisotropic nonlinear Schr\"odinger equation. Then we establish the existence, axial symmetry, exponential decay and orbital stability/instability of standing waves to the anisotropic nonlinear Schr\"odinger equation. The pictures are considerably different from the ones for the isotropic nonlinear Schr\"odinger equations. The results are easily extendable to the higher dimensional case. 
\end{abstract}

\maketitle

\thispagestyle{empty}

\section{Introduction}

In this paper, we are concerned with solutions to the following fourth order anisotropic nonlinear Schr\"odinger equation in $\R \times \R^2$,
\begin{align}\label{equt}
\left\{
\begin{aligned}
&\textnormal{i}\partial_t\psi+\partial_{xx} \psi-\partial_{yyyy} \psi +|\psi|^{p-2} \psi=0, \\
&\psi(0, x)=\psi_0(x) \in H^{1,2}(\R^2),
\end{aligned}
\right.
\end{align}
where $p>2$. We denote by $H^{1,2}(\R^2)$ the underlying energy space, which is defined by the completion of $C^{\infty}_0(\R^2)$ under the norm
$$
\|u\|_{H^{1,2}}:=\|\partial_x u\|_2+\|\partial_{yy} u\|_2 +\|u\|_2.
$$
Higher order derivative modifications in nonlinear Schr\"odinger equations have been interpreted as higher order dispersion and they appear in the study of deep water waves, see for example \cite{D1, TKDV2000}. Equation  \eqref{equt} provides a canonical model for nonlinear Schr\"odinger equations with higher order anisotropic dispersion, which can be regarded as a variant of the models introduced in \cite{K91, K, KS} to describe stabilization of soliton instabilities of nonlinear Schr\"odinger equations with respect to small perturbations. It arises in nonlinear optics to describe the propagation of ultrashort laser pulses in medium with anomalous time-dispersion and it also arises in the study of propagation in fiber arrays and deep water wave dynamics, see for example \cite{ADRT, B, D, KH93, WF} and references therein. 

%On  the other hand, optical pulses in fibres are modeled also by NLS and \cite{T92}, \cite{KH93} suggest the study of third order dispersion effects.  The temporal modulation instabilities have been modelled and studied further in \cite{WF}.

%{\color{blue} This seems not convincing arises in nonlinear optics to model the propagation of ultrashort laser pulses in medium with anomalous time-dispersion and it also arises in the study of propagation in fiber arrays and deep water wave dynamics, see \cite{ADRT, B, D, WF} and references therein. }

Following the early works \cite{FIP, K, KS}, there are many researchers devoted to the study of solutions to the following fourth order nonlinear Schr\"odinger equation in $\R \times \R^n$,
\begin{align}\label{equt1}
\left\{
\begin{aligned}
&\textnormal{i}\partial_t\psi+\Delta \psi-\gamma \Delta^2 \psi +|\psi|^{p-2} \psi=0, \\
&\psi(0, x)=\psi_0(x) \in H^2(\R^n),
\end{aligned}
\right.
\end{align}
where $n \geq 1$, $\gamma \in \R$ and $p>2$.
%has been studied by many researchers in recent years.
%Equation \eqref{equt1} provides a canonical model for nonlinear Hamiltonian equations with dispersion of superquadratic order. 
For example, Ben-Artzi et al. in \cite{BKS}, Pausader in \cite{P2, P1, PS} and Pausader and Xia in \cite{PX} considered the well-posedness and scattering of solutions to \eqref{equt1}, see also \cite{D, Guo, MXZ} and references therein. Successively, Boulenger and Lenzmann in \cite{BL} proved the existence of blowup solutions to \eqref{equt1} for radially symmeytric data, which in turn confirmed a series of numerical studies conducted by Baruch et al. in \cite{BF, BFM, BFM1}. The blowup result in \cite{BL} has been recently extended to cylindrically symmetric data in \cite{Gou}. Furthermore, in \cite{BCdN, BCGJ1, BCGJ2}, Bonheure et al. investigated the existence and quantitative properties of standing waves to \eqref{equt1}. Here stand waves are solutions of the form 
\begin{align} \label{defsw}
\psi(t, x)=e^{\textnormal{i} \omega t} u(x), \quad w \in \R.
\end{align}
This clearly implies that the function $u$ solves the following fourth order equation,
\begin{align} \label{equ0}
\gamma \Delta^2 u- \Delta u +\omega u=|u|^{p-2} u \quad \mbox{in} \,\, \R^n.
\end{align}

It is well-known that, when $\gamma=0$, then solutions to \eqref{equt1} exist globally in time for $p<p_c$ or $p=p_c$ and $\|\psi_0\|_2<N_c$, where $N_c>0$ is a constant depending only on $n$ and
$
p_c=2+\frac 4n.
$ 
However, solutions to \eqref{equt1} may blow up for $p>p_c$ or $p=p_c$ and $\|\psi_0\|_2 \geq N_c$, see for example \cite{C, S}. While $\gamma \neq 0$, then solutions to \eqref{equt1} exist globally in time for $p<\widetilde{p}_c$ or $p=\widetilde{p}_c$ and $\|\psi_0\|_2<\widetilde{N}_c$, where $\widetilde{N}_c>0$ is a constant depending only on $n$ and
$
\widetilde{p}_c=2+\frac 8n.
$ 
However, solutions to \eqref{equt1} may blow up for $p>\widetilde{p}_c$ or $p=\widetilde{p}_c$ and $\|\psi_0\|_2 \geq \widetilde{N}_c$, see for example \cite{FIP}. This obviously shows that the higher order dispersive term helps to stabilize solutions, which in turn confirms the numerical results obtained in \cite{BF, BFM, BFM1, FIP}. It would be interesting to know whether higher order anisotropic dispersive term still has the same effect on stabilization of solutions. 
%To our knowledge, the consideration of solutions to fourth order anisotropic nonlinear Schr\"odinger equations as \eqref{equt} is open. 
As we shall see, when the dispersion is truly anisotropic in each direction, then the pictures to be presented below become considerably different.  %The anisotropy of \eqref{equ} will bring out new ingredients in the forthcoming study. 

The first aim of this paper is to consider the local well-posdeness of solutions to \eqref{equt}. In this direction, we have the following result.

\begin{thm} \label{lw}

Let $p>2$. Then, for any $R >0$,
%and $s \leq 1$, $s<(p-1)/2$, 
there exists a constant $T=T(R)>0$ such that 
%the integral equation
%\begin{equation}\label{equt07}
%      \psi(t) = e^{\textnormal{i} \left(-\partial_{xx} +\partial_{yyyy}\right) t}\psi_0  -\textnormal{i} \int_0^t e^{\textnormal{i}\left(-\partial_{xx} +\partial_{yyyy}\right)(t-\tau)} |\psi(\tau)|^{p-2} \psi(\tau) \,d \tau , 
%   \end{equation}
%associated with 
\eqref{equt} has a unique solution 
$$
\psi \in C([0,T); H^{1,2}(\mathbb{R}^2)) \cap C^1([0,T); H^{-1,-2}(\mathbb{R}^2)) 
$$
%$ \psi \in C([0,T), H^{1,2s}(\mathbb{R}^2))$ 
provided initial datum $\|\psi_0\|_{H^{1,2}(\mathbb{R}^2)} \leq R$, where $H^{-1,-2}(\R^2)$ is the dual space of $H^{1,2}(\R^2)$.
Moreover, the following properties hold true,
\begin{itemize}
%\item[(i)] If $p \in ((2,3],$ then $$\psi(t) \in C([0,T), H^{1,2}(\mathbb{R}^2)) \cap C^1([0,T), H^{-1,-2}(\mathbb{R}^2)) $$
%is a weak solution to \eqref{equt}.
\item[(i)] The mass and energy are conserved, i.e. for any $t \in [0, T)$,
$$
M(\psi(t))=M(\psi_0), \quad E(\psi(t))=E(\psi_0),
$$
where
$$
M(u):=\int_{\R^2} |u|^2 \, dxdy,
$$
$$
E(u):=\frac 12 \int_{\R^2} |\partial_x u|^2 \, dxdy + \frac 12 \int_{\R^2} |\partial_{yy} u|^2 \, dxdy -\frac 1p \int_{\R^2} |u|^p \, dxdy.
$$
%\item[(ii)] 
%The solution map $\psi_0 \in H^{1,2}(\mathbb{R}^2) \mapsto \psi  \in C([0,T); H^{1,2}(\mathbb{R}^2))$ 
%Lipschitz continuous in the ball of radius $R$ in $H^{1,2s}(\mathbb{R}^2)$. 
%is continuous in the ball of radius $R$ in $H^{1,2}(\mathbb{R}^2)$. 
%If $p>3$, then the solution map $\psi_0 \in H^{1,2}(\mathbb{R}^2) \mapsto \psi  \in C([0,T), H^{1,2}(\mathbb{R}^2))$ is Lipschitz continuous in the ball of radius $R$ in $H^{1,2}(\mathbb{R}^2)$.
%If $p \in (2,3],$ then the solution map $\psi_0 \in H^{1,2}(\mathbb{R}^2) \mapsto \psi  \in C([0,T), H^{1,2}(\mathbb{R}^2)) $is continuous in the ball of radius $R$ in $H^{1,2}(\mathbb{R}^2).$
\item[(ii)] Blow-up alternative holds: either $T= +\infty$ or $T< +\infty$ and $\|\psi(t)\|_{H^{1,2}}\to +\infty$ as $t \to T^-$.
\end{itemize}
\end{thm}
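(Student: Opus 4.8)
The natural strategy is a contraction-mapping argument applied to the Duhamel formulation of \eqref{equt}. Writing $S(t):=e^{\textnormal{i}t(\partial_{xx}-\partial_{yyyy})}$ for the free propagator, whose Fourier symbol is $e^{-\textnormal{i}t(\xi^2+\eta^4)}$, a function $\psi$ solves \eqref{equt} on $[0,T)$ if and only if
\begin{align}\label{duhamel-plan}
\psi(t)=S(t)\psi_0+\textnormal{i}\int_0^t S(t-s)\big(|\psi(s)|^{p-2}\psi(s)\big)\,ds.
\end{align}
Since $|e^{-\textnormal{i}t(\xi^2+\eta^4)}|=1$ and this multiplier commutes with multiplication by $(1+\xi^2+\eta^4)^{1/2}$, the group $\{S(t)\}_{t\in\R}$ is unitary on $H^{1,2}(\R^2)$, on $L^2(\R^2)$ and on $H^{-1,-2}(\R^2)$. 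I would therefore seek the solution as a fixed point of the map $\Phi(\psi)$ given by the right-hand side of \eqref{duhamel-plan}, in the complete metric space $X_{T,\rho}:=\{\psi\in C([0,T];H^{1,2}):\sup_{[0,T]}\|\psi(t)\|_{H^{1,2}}\le \rho\}$ with $\rho\sim R$, equipped with the $C([0,T];H^{1,2})$ distance.

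The decisive structural fact — and the point where this anisotropic problem departs sharply from the usual two-dimensional Schr\"odinger setting — is an anisotropic Sobolev embedding. Counting frequencies, the sublevel set $\{\xi^2+\eta^4\le R^2\}$ has measure $\sim R^{3/2}$, so $H^{1,2}(\R^2)$ behaves like $H^1$ in effective dimension $d=\tfrac32$; since $1>\tfrac{d}{2}=\tfrac34$, one obtains
\begin{align}\label{emb-plan}
H^{1,2}(\R^2)\hookrightarrow L^q(\R^2)\quad\text{for every } 2\le q\le\infty,\qquad H^{1,2}(\R^2)\hookrightarrow C_0(\R^2),
\end{align}
together with the Gagliardo--Nirenberg bounds $\|u\|_\infty\lesssim\|u\|_{H^{1,2}}$ and $\|\partial_y u\|_4\lesssim\|u\|_{H^{1,2}}$ (both provable via the anisotropic scaling $(x,y)\mapsto(\beta^2x,\beta y)$). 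In particular $H^{1,2}(\R^2)$ is a Banach algebra, in complete analogy with $H^s(\R^n)$ for $s>n/2$; note that \eqref{emb-plan} also guarantees that the energy $E$ is well defined and finite on $H^{1,2}$ for every $p>2$. Using \eqref{emb-plan} I would then prove that $g(u):=|u|^{p-2}u$ maps $H^{1,2}$ into itself and is Lipschitz on bounded sets: the $L^2$ and $\partial_x$ parts are immediate from $\|g(u)\|_2\le\|u\|_\infty^{p-2}\|u\|_2$ and $|\partial_x g(u)|\lesssim|u|^{p-2}|\partial_x u|$, while differentiating $\partial_{yy}g(u)$ produces a good term bounded by $|u|^{p-2}|\partial_{yy}u|\in L^2$ and a cross term controlled by $|u|^{p-3}|\partial_y u|^2$, which lies in $L^2$ thanks to $\partial_y u\in L^4$.

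With these estimates in hand the argument is routine: unitarity of $S(t)$ gives $\|\Phi(\psi)(t)\|_{H^{1,2}}\le R+T\,C(\rho)$ and $\|\Phi(\psi_1)-\Phi(\psi_2)\|_{C([0,T];H^{1,2})}\le T\,L(\rho)\|\psi_1-\psi_2\|_{C([0,T];H^{1,2})}$, so choosing $\rho=2R$ and $T=T(R)$ small makes $\Phi$ a contraction of $X_{T,\rho}$ into itself, yielding a unique fixed point; continuity in $t$ and the $C^1([0,T];H^{-1,-2})$ regularity follow from \eqref{duhamel-plan} and the boundedness of $\partial_{xx}-\partial_{yyyy}\colon H^{1,2}\to H^{-1,-2}$. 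Conservation of mass follows by pairing the equation with $\psi$ in the $H^{-1,-2}$--$H^{1,2}$ duality and taking imaginary parts; conservation of energy is obtained similarly after a standard regularisation justifying the pairing with $\partial_t\psi$, and the blow-up alternative (ii) is the usual consequence of the fact that the existence time $T$ depends only on $\|\psi_0\|_{H^{1,2}}$, so a solution remaining bounded in $H^{1,2}$ can always be continued.

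I expect the main obstacle to be the nonlinear estimate for $\partial_{yy}g(u)$. The cross term $|u|^{p-3}|\partial_y u|^2$ is harmless for $p\ge 3$, where $|u|^{p-3}\le\|u\|_\infty^{p-3}$, but for $2<p<3$ the map $z\mapsto|z|^{p-2}z$ fails to be $C^2$ at the origin and this term is not controllable pointwise; the clean way around this is to replace the nonlinearity by the smooth approximations $g_\varepsilon(u):=(|u|^2+\varepsilon)^{(p-2)/2}u$, run the contraction with uniform-in-$\varepsilon$ bounds, and pass to the limit $\varepsilon\to0^+$ using the conservation laws to retain the $H^{1,2}$ bounds. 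Verifying the two anisotropic Gagliardo--Nirenberg inequalities underlying \eqref{emb-plan} rigorously, rather than by scaling heuristics, is the other technical point demanding care.
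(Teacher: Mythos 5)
Your free-propagator facts and the anisotropic embedding $H^{1,2}(\R^2)\hookrightarrow L^\infty(\R^2)$ are correct, and your scheme does prove the theorem for $p\ge 3$ (modulo the standard Kato two-norm device: for $3\le p<4$ the map $u\mapsto|u|^{p-2}u$ is in general only H\"older, not Lipschitz, from $H^{1,2}$ to $H^{1,2}$, so one should contract in the $C([0,T];L^2)$ metric on a closed ball of $L^\infty([0,T];H^{1,2})$ rather than in the $C([0,T];H^{1,2})$ distance you specify). But the theorem is stated for all $p>2$, and for $2<p<3$ your argument has a genuine gap exactly where you suspect it, and the patch you propose does not close it. The self-mapping estimate requires $\|\partial_{yy}(|u|^{p-2}u)\|_2\le C(\|u\|_{H^{1,2}})$, and the obstructing term $|u|^{p-3}|\partial_y u|^2$ is simply not controlled by the $H^{1,2}$ norm when $p<3$. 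Your regularisation $g_\varepsilon(u)=(|u|^2+\varepsilon)^{(p-2)/2}u$ replaces this term by one bounded by $(|u|^2+\varepsilon)^{(p-3)/2}|\partial_y u|^2$, whose coefficient has a negative exponent and equals $\varepsilon^{(p-3)/2}$ at zeros of $u$; hence the self-map and Lipschitz constants of the fixed-point map for $g_\varepsilon$ blow up as $\varepsilon\to 0^+$, and the existence time produced by the contraction shrinks to $0$ with $\varepsilon$. The claimed ``uniform-in-$\varepsilon$ bounds'' are therefore not available. Nor can the conservation laws substitute for them: mass and energy conservation for the regularised flow control $\|\psi_\varepsilon(t)\|_{H^{1,2}}$ uniformly only in the mass-subcritical range $p<\frac{14}{3}$ (the energy is not coercive for $p\ge\frac{14}{3}$, so there is no common existence interval in that range), they never control $\|g_\varepsilon(\psi_\varepsilon)\|_{H^{1,2}}$, and passing to the limit $\varepsilon\to 0^+$ would in any case require an a priori compactness argument, identification of the weak limit of the nonlinearity, and an upgrade from $L^\infty_t H^{1,2}$ weak solutions to $C_tH^{1,2}$ solutions with conserved energy --- none of which is sketched.

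This is precisely why the paper does not run a contraction in the energy space at all. It constructs weak solutions by the abstract energy/compactness method of Okazawa--Suzuki--Yokota, verifying their conditions (G1)--(G5), which only require $u\mapsto -|u|^{p-2}u$ to be Lipschitz on bounded sets from $H^{1,2}(\R^2)$ into the \emph{dual} space $H^{-1,-2}(\R^2)$ --- an estimate that needs no derivatives of the nonlinearity (it already follows from $\|\,|u|^{p-2}u-|v|^{p-2}v\,\|_2\lesssim(\|u\|_\infty+\|v\|_\infty)^{p-2}\|u-v\|_2$) and is therefore valid for every $p>2$. Unconditional uniqueness is then proved separately via the anisotropic Strichartz estimate \eqref{eq.NLS1211}, and the abstract theory is invoked again to obtain the $C([0,T);H^{1,2}(\R^2))\cap C^1([0,T);H^{-1,-2}(\R^2))$ regularity, energy conservation and the blow-up alternative. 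To repair your proof you must either restrict to $p\ge 3$ or replace the limiting step by such a compactness framework; as written, the case $2<p<3$ is not proved.
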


Since the problem under our consideration is anisotropic in two dimensions, then the standard ways based on the energy and Strichartz type estimates jointly with the contraction principle are not available to get the existence of solutions for $2<p<3$. Therefore, to prove Theorem \ref{lw}, 
%when $p>3$, we shall make use of the Strichartz estimates for the anisotropic operator and the contraction mapping principle to obtain the desired results. While $2<p<3$, 
we shall adapt \cite[Theorem 2.2]{OSY12} due to Okazawa, Suzuki and Yokota to demonstrate the existence of weak solutions to \eqref{equt}, which actually relies on the compactness arguments. Then, as an application of the Strichartz type estimates, we are able to establish the uniqueness of the solutions with the desirable regularity. 
%In view of \cite[Theorem 2.3]{OSY12}, then the proof is completed.
%The proof of Theorem \ref{lw} is mainly based on smoothing estimates obtained in \cite[Theorem 3.1]{KPV} and contraction mapping principle.

%{\color{red} 
%\begin{rem}
%The classical NLS is locally well posed in $L^2$ provided $p$ is mass cub critical. The mass critical value of $p$ in our case is $14/3$ (see Remark \ref{rem68} below). In the case $p \in (1, 14/3)$ one can show the local well posedness in $L^2$ too.  
%\end{rem}
%}

Next we are going to investigate standing waves to \eqref{equt}, which are solutions given by \eqref{defsw}.
%, which are solutions of the form 
%$$
%\psi(t, x)=e^{\textnormal{i} \omega t} u(x), \quad w \in \R.
%$$
It then follows that the function $u$ satisfies the following fourth order anisotropic equation,
\begin{align} \label{equ}
-\partial_{xx} u+ \partial_{yyyy} u +\omega u=|u|^{p-2} u \quad \mbox{in} \,\, \R^2.
\end{align}
%The problem under consideration arises from the study of standing waves of the time dependent nonlinear Schr\"odinger equation
%\begin{align}\label{equt}
%\textnormal{i}\partial_t \phi +\partial_{xx} \phi- \partial_{yyyy} \phi+|\phi|^{p-2} \phi=0 \quad \mbox{in} \,\, \R\times \R^2,
%\end{align}
%where $p>2$.
%Obviously, $H^{1,2}(\R^2)$ is a Hilbert space equipped with inner product
%$$
%(u ,v)_{H^{1,2}}:=(\partial_x u, \partial_v)_2 + (\partial_{yy} u, \partial_{yy} v)_2 + (u ,v)_2.
%$$
From variational perspectives, any solution to \eqref{equ} corresponds to a critical point of the underlying energy functional $J_{\omega}: H^{1,2}(\R^2) \to \R$ defined by
$$
J_{\omega}(u):=\frac 12 \int_{\R^2} |\partial_x u|^2 \, dxdy + \frac {\omega}{2} \int_{\R^2} |\partial_{yy} u|^2 \, dxdy+\frac 12 \int_{\R^2} |u|^2 \, dxdy -\frac 1p \int_{\R^2} |u|^p \, dxdy.
$$
Here we are only interested in ground states to \eqref{equ}, which are solutions minimizing the functional $J_{\omega}$ among all nontrivial solutions to \eqref{equ}. The existence of the solutions is addressed as follows.

%The first result in this direction reads as follows.

\begin{thm}\label{groundstate}
Let $p>2$ and $\omega>0$. Then there exist ground states to \eqref{equ}.
\end{thm}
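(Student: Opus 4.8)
The plan is to obtain a ground state as a minimizer of a constrained variational problem, the only real difficulty being the loss of compactness caused by the invariance of \eqref{equ} under translations in $\R^2$. Throughout set $\|u\|_*^2:=\int_{\R^2}|\partial_x u|^2\,dxdy+\omega\int_{\R^2}|\partial_{yy}u|^2\,dxdy+\int_{\R^2}|u|^2\,dxdy$, the quadratic part of $J_\omega$; since $\omega>0$ this is a norm on $H^{1,2}(\R^2)$ equivalent to $\|\cdot\|_{H^{1,2}}$. A useful preliminary observation is that the integration-by-parts bound $\|\partial_y u\|_2\le\|u\|_2^{1/2}\|\partial_{yy}u\|_2^{1/2}$ produces a continuous inclusion $H^{1,2}(\R^2)\hookrightarrow H^1(\R^2)$; hence $H^{1,2}(\R^2)$ inherits the two-dimensional Sobolev embedding $H^1(\R^2)\hookrightarrow L^q(\R^2)$ for every $q\in[2,\infty)$, so that $J_\omega$ is well defined and of class $C^1$ for every $p>2$, with no upper restriction on $p$.

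First I would set up the Nehari manifold $\mathcal N:=\{u\in H^{1,2}(\R^2)\setminus\{0\}:\ \|u\|_*^2=\int_{\R^2}|u|^p\,dxdy\}$. For each $u\ne0$ the fibering map $t\mapsto J_\omega(tu)=\tfrac{t^2}{2}\|u\|_*^2-\tfrac{t^p}{p}\int_{\R^2}|u|^p\,dxdy$ has a unique maximizer $t_u>0$ with $t_u u\in\mathcal N$, so $\mathcal N\ne\emptyset$; on $\mathcal N$ one has $J_\omega(u)=(\tfrac12-\tfrac1p)\|u\|_*^2$, and the Sobolev embedding yields $\|u\|_*\ge c>0$ uniformly on $\mathcal N$, whence $m:=\inf_{\mathcal N}J_\omega>0$. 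A short computation shows that this constrained minimization is equivalent to minimizing the scale-invariant quotient $S:=\inf_{u\ne0}\|u\|_*^2/\|u\|_p^2$, with $m=(\tfrac12-\tfrac1p)S^{p/(p-2)}$; I would therefore work with a sequence $(u_n)$ normalized by $\|u_n\|_p=1$ and $\|u_n\|_*^2\to S$, which is bounded in $H^{1,2}(\R^2)$.

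The compactness step is the crux. Since the only obstruction is translation, I would first rule out vanishing by the version of Lions' lemma inherited from the $H^1(\R^2)$ structure: if $\sup_{z\in\R^2}\int_{B_R(z)}|u_n|^2\,dxdy\to0$ for some $R>0$, then $u_n\to0$ in $L^p(\R^2)$, contradicting $\|u_n\|_p=1$. Hence there are $z_n\in\R^2$ and $\de>0$ with $\int_{B_R(z_n)}|u_n|^2\,dxdy\ge\de$; setting $\tilde u_n:=u_n(\cdot-z_n)$, translation invariance preserves $\|\tilde u_n\|_p=1$ and $\|\tilde u_n\|_*^2\to S$, and up to a subsequence $\tilde u_n\wto\tilde u$ in $H^{1,2}(\R^2)$. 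The local compact embedding $H^{1,2}(B_R)\hookrightarrow\hookrightarrow L^2(B_R)$ (again inherited from $H^1$) then gives $\int_{B_R(0)}|\tilde u|^2\,dxdy\ge\de$, so $\tilde u\ne0$.

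It remains to exclude dichotomy and show $\tilde u$ is a minimizer. Writing $\tilde u_n=\tilde u+w_n$ with $w_n\wto0$, the Brezis--Lieb lemma gives $\|\tilde u_n\|_p^p=\|\tilde u\|_p^p+\|w_n\|_p^p+o(1)$, while $\langle\tilde u,w_n\rangle_*\to0$ gives $\|\tilde u_n\|_*^2=\|\tilde u\|_*^2+\|w_n\|_*^2+o(1)$. Putting $a:=\|\tilde u\|_p^p\in(0,1]$ and $b:=\lim_n\|w_n\|_p^p=1-a$ (along a further subsequence), the definition of $S$ yields $\|\tilde u\|_*^2\ge Sa^{2/p}$ and $\lim_n\|w_n\|_*^2\ge Sb^{2/p}$, so that $S=\|\tilde u\|_*^2+\lim_n\|w_n\|_*^2\ge S(a^{2/p}+b^{2/p})$, i.e. $1\ge a^{2/p}+b^{2/p}$. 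Because $p>2$, the exponent $2/p\in(0,1)$ and $t\mapsto t^{2/p}$ is strictly superadditive, so $a^{2/p}+b^{2/p}>1$ whenever $a,b>0$; since $a>0$ we must have $b=0$, i.e. $w_n\to0$ in $L^p(\R^2)$. Therefore $\|\tilde u\|_p=1$, and weak lower semicontinuity forces $\|\tilde u\|_*^2=S$, so $\tilde u$ attains $S$. A Lagrange-multiplier argument shows that $\tilde u$ solves the Euler--Lagrange equation up to a positive constant, which is absorbed by the scaling $u\mapsto\la u$ to produce a nontrivial solution $u_0$ of \eqref{equ} with $J_\omega(u_0)=m$. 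As every nontrivial solution of \eqref{equ} lies on $\mathcal N$ and hence has energy at least $m$, the solution $u_0$ is a ground state. The main obstacle is concentrated in the compactness step; once the vanishing and local-compactness lemmas are available---here cheaply, via $H^{1,2}(\R^2)\hookrightarrow H^1(\R^2)$---the remaining variational scheme is routine.
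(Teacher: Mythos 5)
Your proof is correct, but its compactness mechanism is genuinely different from the paper's. The paper also minimizes $J_\omega$ over the Nehari manifold $N$, but it works with a Palais--Smale sequence (using that $N$ is a natural constraint), applies its anisotropic concentration-compactness lemma (Lemma \ref{ccl}, whose embedding hypothesis is supplied by the anisotropic Gagliardo--Nirenberg inequality of Lemma \ref{inequality}) to produce a translated weak limit $u\neq 0$, observes that this limit is automatically a solution of \eqref{equ} because $J_\omega'(u_n)\to 0$, and then closes with Fatou's lemma to get $J_\omega(u)\le m_\omega$. You instead minimize the scale-invariant quotient $S$ directly: you rule out vanishing with the classical Lions lemma, imported through the elementary embedding $H^{1,2}(\R^2)\hookrightarrow H^1(\R^2)$ (from $\|\partial_y u\|_2^2=-\int_{\R^2} u\,\partial_{yy}u\,dxdy\le \|u\|_2\|\partial_{yy}u\|_2$), and you rule out dichotomy by Brezis--Lieb splitting plus strict subadditivity of $t\mapsto t^{2/p}$, so that the weak limit attains $S$ and the Euler--Lagrange equation is invoked only at the very end. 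Your route is more elementary and self-contained for this particular theorem: it never needs the sharp anisotropic inequality of Lemma \ref{inequality}, only the crude embedding through $H^1$. The paper's route avoids the Brezis--Lieb bookkeeping, because the weak limit of a Palais--Smale sequence is already a solution, and it reuses machinery that the paper must develop anyway for the stability and global-existence theorems. Two small blemishes, neither fatal: the function $t\mapsto t^{2/p}$ with $0<2/p<1$ is strictly \emph{subadditive} (which is exactly what gives $a^{2/p}+b^{2/p}>(a+b)^{2/p}$ for $a,b>0$), not superadditive; and your $\|\cdot\|_*$ copies the paper's misprint in the definition of $J_\omega$ by placing $\omega$ on the $\partial_{yy}$ term --- to land exactly on \eqref{equ} after the Lagrange-multiplier and scaling step, the quadratic form must be $\|\partial_x u\|_2^2+\|\partial_{yy}u\|_2^2+\omega\|u\|_2^2$, as in $I_\omega$ and as used in all of the paper's computations; with that substitution every step of your argument goes through verbatim.
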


To demonstrate the existence of ground states to \eqref{equ}, we shall introduce the following minimization problem with the help of the associated Nehari manifold,
\begin{align} \label{min1}
m_{\omega}:=\inf_{u \in N} J_{\omega}(u),
\end{align}
where 
$$
\quad N:=\left\{u \in H^{1,2}(\R^2) \backslash \{0\} : I_{\omega}(u)=0\right\},
$$
$$
I_{\omega}(u):=\int_{\R^2} |\partial_x u|^2 \,dxdy +\int_{\R^2} |\partial_{yy} u|^2 \,dxdy + \omega \int_{\R^2} |u|^2 \,dxdy -\int_{\R^2} |u|^p \, dxdy.
$$
Using the anisotropic Gagliardo-Nirenberg inequality and the associated concentration compactness result, see Lemmas \ref{ccl} and \ref{inequality}, we then have the existence of minimizers to \eqref{min1}. This in turn leads to the existence of ground states to \eqref{equ}.

\begin{rem}
Let $p>2$ and $\omega \leq 0$. Then there exists no solutions in $H^{1,2}(\R^2)$ to \eqref{equ}, see Corollary \ref{nonexistence}.
\end{rem}

In what follows, we are going to reveal some quantitative properties of ground states to \eqref{equ}. Due to the anisotropy of \eqref{equ}, then we cannot expect the radial symmetry of ground states. In fact, we have axial symmetry of the solutions.

\begin{thm} \label{symmetry}
Let $p>2$ and $\omega>0$. Then any ground state to \eqref{equ} is axially symmetric with respect to $x$-axis up to translations. Moreover, if $p>2$ and $p \in \mathbb{N}$, then any ground state to \eqref{equ} is axially symmetric with respect to $x$-axis and $y$-axis up to translations.
\end{thm}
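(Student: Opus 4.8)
The plan is to phrase everything in terms of the scaling–invariant reformulation of the minimization \eqref{min1}. Since any ground state is, up to the Lagrange multiplier, a minimizer of $J_\omega$ on the Nehari manifold $N$, and since on $N$ one has $J_\omega(u)=\left(\tfrac12-\tfrac1p\right)Q(u)$ with the coercive quadratic form $Q(u):=\|\partial_x u\|_2^2+\|\partial_{yy}u\|_2^2+\omega\|u\|_2^2$, minimizing $J_\omega$ over $N$ is equivalent to minimizing the scale–invariant quotient $\mathcal R(u):=Q(u)/\|u\|_p^{2}$ over $H^{1,2}(\R^2)\setminus\{0\}$. The functional $\mathcal R$ is invariant under translations and under the two reflections $\sigma_x\colon(x,y)\mapsto(-x,y)$ and $\sigma_y\colon(x,y)\mapsto(x,-y)$, and the two axial symmetries in the statement are precisely invariance under $\sigma_x$ (the $x$-axis symmetry) and under $\sigma_y$ (the $y$-axis symmetry). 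The decisive structural point is that $x$ is the variable of a \emph{second} order operator while $y$ is the variable of a \emph{fourth} order one, so the two reflections must be handled by different devices.

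First I would prove the $\sigma_x$-symmetry for every $p>2$ by a reflection in physical space. Let $u$ be a minimizer of $\mathcal R$; it is a smooth solution of \eqref{equ} by hypoellipticity and bootstrap. Choose the vertical line $\{x=a\}$ bisecting the $L^p$–mass, i.e. $\int_{x>a}|u|^p\,dxdy=\int_{x<a}|u|^p\,dxdy$, and let $u_1,u_2$ be the even reflections across $\{x=a\}$ of $u|_{\{x>a\}}$ and $u|_{\{x<a\}}$. Because only a first order $x$–derivative enters $Q$, the resulting corner at $\{x=a\}$ is harmless and $u_1,u_2\in H^{1,2}(\R^2)$; crucially the $\partial_{yy}$–regularity is untouched, so $\|\partial_{yy}u_j\|_2<\infty$. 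As $Q$ and $\|\cdot\|_p^p$ are additive over $\{x>a\}\sqcup\{x<a\}$ and the plane bisects the $L^p$–mass, one gets $\mathcal R(u_1)+\mathcal R(u_2)=2\mathcal R(u)=2m$ with $m=\inf\mathcal R$; since $\mathcal R(u_j)\ge m$, both $u_1,u_2$ are minimizers, hence ground states. Being smooth and even about $\{x=a\}$, $u_1$ satisfies $\partial_x u(a,\cdot)\equiv0$. Then $u$ and $\tilde u(x,y):=u(2a-x,y)$ are two $H^{1,2}$ solutions of \eqref{equ} with identical Cauchy data $(u(a,\cdot),0)$ along $\{x=a\}$, and since the top $x$–derivative $-\partial_{xx}$ has nonvanishing coefficient, a log–convexity (Agmon–Nirenberg) unique continuation argument in the $x$–variable forces $u\equiv\tilde u$, i.e. $u$ is $\sigma_x$–symmetric about $\{x=a\}$. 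No restriction on $p$ appears because $\|\cdot\|_p$ is preserved exactly.

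For the $\sigma_y$–symmetry the same reflection is \emph{unavailable}: an even reflection across a horizontal line $\{y=b\}$ produces a corner in $\partial_y$, hence a Dirac mass in $\partial_{yy}$ and $\|\partial_{yy}\cdot\|_2=+\infty$. This is exactly why the fourth order direction is harder, and here I would rearrange in Fourier space. For each fixed $\xi$, define $u^\sharp$ by $\widehat{u^\sharp}(\xi,\cdot)=(|\hat u(\xi,\cdot)|)^*$, the one–dimensional symmetric–decreasing rearrangement in $\eta$. Then $u^\sharp$ is even in $y$; from $\|\partial_{yy}u\|_2^2=\int_{\R^2}\eta^4|\hat u|^2\,d\xi d\eta$ and the monotonicity of $\eta^4$ one gets $\|\partial_{yy}u^\sharp\|_2\le\|\partial_{yy}u\|_2$, while rearranging in $\eta$ preserves $\int_{\R}|\hat u(\xi,\eta)|^2\,d\eta$ for every $\xi$, so (the weight $\xi^2$ being independent of $\eta$) $\|\partial_x u^\sharp\|_2=\|\partial_x u\|_2$ and $\|u^\sharp\|_2=\|u\|_2$. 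Granting $\|u^\sharp\|_p\ge\|u\|_p$ one obtains $\mathcal R(u^\sharp)\le m$, hence $u^\sharp$ is a minimizer, and the equality case of the rearrangement inequalities yields that $u$ is $\sigma_y$–symmetric (up to a $y$–translation).

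The hard part, and the sole place where the hypothesis $p\in\N$ enters, is the inequality $\|u^\sharp\|_p\ge\|u\|_p$. I would prove it through the Brascamp–Lieb–Luttinger rearrangement inequality: when $p$ is an integer, $\|u\|_p^p$ can be written as a multilinear convolution integral of $\hat u$ (for even $p=2m$, $\|u\|_{2m}^{2m}=\|(\hat u)^{*m}\|_2^2$ with $*m$ the $m$–fold convolution), and passing to the symmetric–decreasing rearrangement in $\eta$ can only increase such an integral. For non–integer $p$ this algebraic structure is lost, which is precisely why only the weaker $\sigma_x$–symmetry persists for general $p$. The remaining obstacle is the equality analysis: one must show that equality propagates back to $u$ itself, not merely to $u^\sharp$, which requires the strict forms of the rearrangement and convolution inequalities together with a short argument excluding that the phase of $\hat u$ keeps every quantity constant while $u$ stays non–symmetric. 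By contrast the $\sigma_x$–part is soft, its only non–elementary ingredients being smoothness of minimizers and the unique continuation in $x$; the genuine difficulty of the theorem is concentrated entirely in the Fourier–space $L^p$–monotonicity for the fourth order variable.
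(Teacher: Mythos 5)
Your proposal follows essentially the same two-step strategy as the paper: for the $x$-direction symmetry (all $p>2$) you use the Lopes-type reflection across a vertical line bisecting the $L^p$-mass — valid precisely because only the first-order derivative $\partial_x$ appears, so reflection stays in $H^{1,2}$ — followed by a unique-continuation step (the paper cites de Bouard--Saut's Theorem A.1 where you invoke an Agmon--Nirenberg log-convexity argument, the same family of tools), and for the $y$-direction symmetry with integer $p$ you use Fourier--Steiner rearrangement with the Brascamp--Lieb--Luttinger inequality and the Lenzmann--Sok equality analysis, exactly as the paper does. The only deviations are cosmetic: you minimize the scale-invariant quotient rather than argue by contradiction on the Nehari manifold, and you rearrange only in the $y$-frequency variable and import the $x$-symmetry from the first step, whereas the paper composes Steiner rearrangements in both frequency directions; the key lemmas, the role of the hypothesis $p\in\mathbb{N}$, and the structural reason the two directions require different devices are identical to the paper's.
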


Noting that the equation \eqref{equ} is anisotropic and has the higher-order dispersive term, to achieve the symmetry of the solutions with respect to $x$-axis, then we shall take advantage of the reflection techniques in \cite{L} and the unique continuation principle \cite[Theorem A.1]{dS}. When $p>2$ and $p \in \mathbb{N}$, to show the symmetry of the solutions with respect to $x$-axis and $y$-axis, we shall make use of the Fourier rearrangement arguments in \cite{BL, LS}.

\begin{thm} \label{decay}
Let $p>2$ and $\omega>0$. Let $u \in H^{1,2}(\R^2)$ be a solution to \eqref{equ}. Then $u \in L^{\infty}(\R^2) \cap C^{\infty}(\R^2)$. Moreover, there exist $C>0$ and $\sigma_0>0$ such that, for any $0<\sigma<\sigma_0$, 
$$
|u(x, y)| \leq C |y|^{-\frac 13} e^{-\sigma \left(|x|+|y|^{\frac 23}\right)}, \quad (x, y) \in \R^2.
$$
\end{thm}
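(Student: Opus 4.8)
The statement splits into a regularity part and a decay part, and I would treat them in that order. For regularity the key observation is that the energy space already embeds into $L^\infty$: since $\int_{\R^2}(1+\xi^2+\eta^4)^{-1}\,d\xi\,d\eta<\infty$ (integrate in $\xi$ first, leaving $\int(1+\eta^4)^{-1/2}\,d\eta<\infty$), Cauchy--Schwarz gives $\|\widehat u\|_{L^1}\lesssim\|(1+\xi^2+\eta^4)^{1/2}\widehat u\|_{L^2}\lesssim\|u\|_{H^{1,2}}$, so $u\in L^\infty$ and, by Riemann--Lebesgue, $u\in C_0(\R^2)$. Consequently $f:=|u|^{p-2}u\in L^2\cap L^\infty$ vanishes at infinity, and a standard bootstrap using the anisotropic elliptic regularity of $L:=-\partial_{xx}+\partial_{yyyy}+\omega$ (whose symbol satisfies $\xi^2+\eta^4+\omega\gtrsim 1+\xi^2+\eta^4$) promotes $u$ to $C^\infty$; where $p$ is non-integer this is carried out on the open set $\{u\neq0\}$.

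For the decay I would pass to the integral equation $u=G*f$, where $G$ is the kernel of $L^{-1}$, i.e. $\widehat G(\xi,\eta)=(\xi^2+\eta^4+\omega)^{-1}$. The cleanest route to pointwise bounds is the subordination identity $L^{-1}=\int_0^\infty e^{-tL}\,dt$ together with the commuting factorization $e^{-tL}=e^{-\omega t}e^{t\partial_{xx}}e^{-t\partial_{yyyy}}$, which yields
\[ G(x,y)=\int_0^\infty e^{-\omega t}\,\frac{e^{-x^2/(4t)}}{\sqrt{4\pi t}}\,t^{-1/4}P\!\left(y\,t^{-1/4}\right)dt, \]
where $P$ is the one-dimensional fourth-order heat profile, $\widehat P(\eta)=e^{-\eta^4}$, with the classical asymptotics $P(z)\sim c\,|z|^{-1/3}e^{-b|z|^{4/3}}$ as $|z|\to\infty$, $b>0$. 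Estimating the $t$-integral by the Laplace method, the Gaussian factor together with $e^{-\omega t}$ produces exponential decay in $x$ at rate $\sqrt\omega$, while the profile $P$ contributes the algebraic prefactor $|y|^{-1/3}$ and an exponential gain in $y$; collecting the two directions gives
\[ |G(x,y)|\le C\,|y|^{-1/3}e^{-\sigma_0(|x|+|y|^{2/3})}, \]
with $\sigma_0$ dictated by the $x$-rate $\sqrt\omega$. I would record here that the sharp $y$-decay is in fact linear-exponential, so the stated $|y|^{2/3}$ bound is a robust, and for the bootstrap more convenient, consequence.

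To transfer the decay to $u$ I would exploit $p>2$ together with the submultiplicativity of the weight. Because $0<\tfrac23<1$, the power $|y|^{2/3}$ is subadditive, so $w(x,y):=e^{\sigma(|x|+|y|^{2/3})}$ satisfies $w(z)\le w(z-z')\,w(z')$ and $w\ge1$; hence from $u=G*f$ one obtains
\[ w(z)\,|u(z)|\le\int (w|G|)(z-z')\,(w|f|)(z')\,dz'. \]
Here $w|f|=w\,|u|^{p-1}\le\big(\sup_{|z'|\ge r}w|u|\big)^{p-2}\,w|u|$ on $\{|z'|\ge r\}$, while $w|f|$ is integrable on $\{|z'|<r\}$. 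A standard near/far decomposition then closes a bootstrap: for $\sigma<\sigma_0$ one has $\|w|G|\|_{L^1}<\infty$ (Young's inequality, using that $|y|^{-1/3}$ is integrable near $y=0$ and $e^{-(\sigma_0-\sigma)|y|^{2/3}}$ beats every polynomial), the far part is controlled by this $L^1$ norm, and the near part carries the smallness $\big(\sup_{|z'|\ge r}|u|\big)^{p-2}$ coming from $u\in C_0$. This yields $\sup_z w(z)|u(z)|<\infty$, which is the stated bound.

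The main obstacle is the pointwise anisotropic kernel estimate: carrying out the Laplace analysis of the subordination integral uniformly in $(x,y)$---in particular across the transition between the small-frequency (mass-dominated) and large-frequency (fourth-order-dominated) regimes---and extracting the correct prefactor $|y|^{-1/3}$ requires care with the profile $P$ and its asymptotics. The secondary difficulty is to seed and close the nonlinear bootstrap uniformly down to every $\sigma<\sigma_0$: since $L$ admits no maximum principle, the initial quantitative decay must be produced by the convolution itself, via the integrability of $G$ against $w$ together with the decay of $f$ inherited from $u\in C_0$, with the subadditivity of $|y|^{2/3}$ the structural feature that makes the weighted convolution inequality available.
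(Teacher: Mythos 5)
Your proposal is correct and follows essentially the same route as the paper: the paper also represents $u=\mathcal{K}\ast(|u|^{p-2}u)$ with $\mathcal{K}$ given by the subordination formula $\int_0^\infty e^{-t}\mathcal{H}(\cdot,t)\,dt$, factors $\mathcal{H}$ into the Gaussian in $x$ times the quartic heat profile in $y$, invokes the same $|z|^{-1/3}e^{-b|z|^{4/3}}$ asymptotics (citing Boyd) to get $|\mathcal{K}(x,y)|\le C|y|^{-1/3}e^{-C_2(|x|+|y|^{2/3})}$, and then transfers the decay to $u$ by exactly the weighted-convolution bootstrap you describe, which the paper outsources to Bona--Li \cite{BoLi}. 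The only cosmetic differences are that the paper obtains $u\in L^\infty$ from $\mathcal{K}\in L^q$ for all $q\ge 1$ plus H\"older rather than from the Fourier embedding $H^{1,2}(\R^2)\hookrightarrow L^\infty$, and that it derives the kernel bound by an explicit splitting of the $t$-integral rather than a Laplace-method discussion.
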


\begin{rem}
Since \eqref{equ} is anisotropic, from Theorem \ref{decay}, then we see that solutions admit different decay rates at infinity in the $x$-axis and $y$-axis directions.
\end{rem}

We are now ready to discuss orbital stability and instability of ground states to \eqref{equ} in the following sense.

\begin{defi}
We say that a set $\mathcal{G} \subset H^{1,2}(\R^2)$ is orbitally stable if for any $\eps>0$, there exists $\delta>0$ such that if $\psi_0 \in H^{1,2}(\R^2)$ satisfies 
$$
\inf_{u \in \mathcal{G}}\|\psi_0-u\|_{H^{1,2}}<\delta,
$$ 
then the solution $\psi \in C(\R, H^{1,2}(\R^2))$ to \eqref{equt} with initial datum $\psi_0$ satisfies
$$
\sup_{t \in \R} \inf_{u \in \mathcal{G}} \|\psi(t) -u\|_{H^{1,2}}<\eps.
$$
Otherwise, we say that $\mathcal{G}$ is orbitally unstbale.
\end{defi}

\begin{defi}
For $u_{\omega} \in H^{1,2}(\R^2)$, we say that standing wave $e^{\textnormal{i} \omega t} u_{\omega}$ is orbitally stable if its orbit
$$
\mathcal{G}_{\omega} :=\left\{e^{\textnormal{i} \theta} u_{\omega}(\cdot + \xi_1. \cdot + \xi_2) : \theta \in \R, (\xi_1, \xi_2) \in \R^2\right\}
$$
is orbitally stable. Otherwise, we say that $e^{\textnormal{i} \omega t} u_{\omega}$ is orbitally unstable.
\end{defi}

\begin{thm} \label{stability}
Let $2<p<\frac{14}{3}$ and $\omega>0$. Then the set of ground states to \eqref{equ} is orbitally stable.
\end{thm}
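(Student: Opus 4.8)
The plan is to recast orbital stability as a compactness property of a constrained minimization problem built from the conserved quantities $M$ and $E$, following the Cazenave--Lions scheme. For $c>0$ I would set
$$
\Gamma(c):=\inf\left\{E(u):u\in H^{1,2}(\R^2),\ M(u)=c\right\},
$$
and first check that the range $2<p<\tfrac{14}{3}$ is exactly what makes this problem well behaved. Feeding the anisotropic Gagliardo--Nirenberg inequality of Lemma \ref{inequality} into $E$ bounds $\tfrac1p\|u\|_p^p$ by the mass times $\big(\|\partial_x u\|_2^2+\|\partial_{yy}u\|_2^2\big)^{3(p-2)/8}$, and the condition $3(p-2)/8<1$ is precisely $p<\tfrac{14}{3}$. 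Hence $E$ is bounded below and coercive on each mass sphere, every minimizing sequence is bounded in $H^{1,2}(\R^2)$, and testing with an anisotropically rescaled profile gives $-\infty<\Gamma(c)<0$.

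Next I would prove compactness of minimizing sequences modulo translations. Using the dilation $v=\sqrt{\theta}\,u$, which multiplies the mass by $\theta$ but decreases $E$ superlinearly since $p>2$, I would first obtain $\Gamma(\theta c)<\theta\,\Gamma(c)$ for $\theta>1$ and deduce from it the strict subadditivity $\Gamma(c)<\Gamma(c_1)+\Gamma(c-c_1)$ for $0<c_1<c$. Then I would run the concentration--compactness alternative of Lemma \ref{ccl} on a minimizing sequence: strict negativity $\Gamma(c)<0$ excludes vanishing, strict subadditivity excludes dichotomy, and what remains is compactness, so after suitable translations the sequence converges strongly in $H^{1,2}(\R^2)$ to a minimizer. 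In particular the minimizing set $\mathcal M_c$ is nonempty.

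I would then identify $\mathcal M_c$ with the set $\mathcal G$ of ground states. Any minimizer $u$ of $\Gamma(c)$ solves \eqref{equ} with some Lagrange multiplier $\omega>0$; writing the two Pohozaev identities produced by the scalings $u(\lambda x,y)$ and $u(x,\nu y)$ together with the Nehari identity, I would solve for $\|\partial_x u\|_2^2$, $\|\partial_{yy}u\|_2^2$ and $\omega\|u\|_2^2$ in terms of $\|u\|_p^p$, which forces all solutions at a fixed $\omega$ to carry the same mass. Combining this with the exact rescaling $u(x,y)=\omega^{1/(p-2)}v(\omega^{1/2}x,\omega^{1/4}y)$ yields $c(\omega)=\omega^{\frac{2}{p-2}-\frac34}c(1)$, which is strictly monotone in $\omega$ exactly because $p\neq\tfrac{14}{3}$, hence injective. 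Using $J_\omega=E+\tfrac{\omega}{2}M$ on the sphere $M=c(\omega)$, I would conclude that the Nehari ground states for $\omega$ are precisely the minimizers of $\Gamma(c(\omega))$, so $\mathcal G=\mathcal M_{c(\omega)}$.

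Orbital stability would then follow by the usual contradiction argument. Coercivity of $E$ on the constraint together with the blow-up alternative in Theorem \ref{lw} makes every solution with datum near $\mathcal G$ global. If $\mathcal G$ were unstable, there would be data $\psi_0^n$ with $\mathrm{dist}(\psi_0^n,\mathcal G)\to0$ and times $t_n$ with $\mathrm{dist}(\psi^n(t_n),\mathcal G)\ge\varepsilon_0$; by conservation of $M$ and $E$ and continuity of these functionals, $\psi^n(t_n)$ is, after a harmless mass normalization, a minimizing sequence for $\Gamma(c(\omega))$, hence relatively compact up to translation with limit in $\mathcal M_{c(\omega)}=\mathcal G$, contradicting the lower bound $\varepsilon_0$. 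I expect the genuine difficulty to sit in the compactness step: ruling out dichotomy in the truly anisotropic setting, where the only available invariances are the two translations and the splitting of a minimizing sequence must be compatible with the anisotropic scaling of the energy, which is exactly the work carried by Lemma \ref{ccl}.
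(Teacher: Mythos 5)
Your overall strategy coincides with the paper's: recast stability as compactness for the mass-constrained problem $\Gamma(c)=\inf\{E(u):\|u\|_2^2=c\}$ (the paper's $m(c)$), get $-\infty<\Gamma(c)<0$ and coercivity from the anisotropic Gagliardo--Nirenberg inequality of Lemma \ref{inequality} (this is exactly where $2<p<\tfrac{14}{3}$ enters), prove strict subadditivity by scaling, exclude vanishing and dichotomy via Lemma \ref{ccl}, and conclude with the Cazenave--Lions contradiction argument. All of these steps are sound and match the paper's proof.

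The genuine gap is in your identification $\mathcal{G}=\mathcal{M}_{c(\omega)}$. You claim that the Nehari identity together with the two Pohozaev identities ``forces all solutions at a fixed $\omega$ to carry the same mass.'' It does not: those identities give $\omega\|u\|_2^2=\tfrac{p+6}{4p}\|u\|_p^p$ and $\|\partial_x u\|_2^2+\|\partial_{yy}u\|_2^2=\tfrac{3(p-2)}{4p}\|u\|_p^p$, so the mass is proportional to $\|u\|_p^p$, which is \emph{not} the same for all solutions at a given $\omega$; it is the same only for all \emph{ground states}, because they share the action level $m_\omega=\tfrac{p-2}{2p}\|u\|_p^p$. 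This false intermediate claim is what your injectivity argument rests on: given a minimizer $v$ of $\Gamma(c)$ with Lagrange multiplier $\omega_c$, you need $\|v\|_2^2=c(\omega_c)$ in order to conclude $c(\omega_c)=c(\omega)$ and hence $\omega_c=\omega$; but the mass of $v$ equals $c(\omega_c)$ only if $v$ is already known to be a \emph{ground state} of its own equation, which you have not shown and which cannot be obtained by comparing actions on the sphere $S(c)$, since the ground state at $\omega_c$ lives on the (a priori different) sphere of mass $c(\omega_c)$ --- the argument is circular. The paper escapes this circularity with an $\omega$-independent comparison: the functional $Q$ contains no $\omega$, every constrained minimizer satisfies $Q(v)=0$ by Lemma \ref{ph} applied to its own equation (whatever its multiplier), and the characterization \eqref{mp}, namely $m_\omega=\inf\{J_\omega(u):u\neq 0,\ Q(u)=0\}$, then gives $J_\omega(v)\ge m_\omega$ directly; the chain $m(c)\le E(u_\omega)=J_\omega(u_\omega)-\tfrac{\omega}{2}c\le J_\omega(v)-\tfrac{\omega}{2}c=E(v)=m(c)$ closes the identification in both directions. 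Without this (or an equivalent $\omega$-free variational characterization of $m_\omega$), your argument proves orbital stability of the set of constrained minimizers, but not of the set of ground states at the prescribed frequency $\omega$, which is what Theorem \ref{stability} asserts.
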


First we observe that all ground states to \eqref{equ} admit the same $L^2$ norm. Then we introduce a global minimization problem given by the energy functional $E$ subject to the $L^2$-norm constraint. By applying the Lions concentration compactness principle in \cite{Li1, Li2}, we can derive the compactness of any minimizing sequence in $H^{1,2}(\R^2)$ up to translations. This then leads to the desired conclusion.

\begin{rem}
In view of the anisotropic Gagliardo-Nirenberg inequality in Lemma \ref{inequality}, we can clearly find that $p=\frac{14}{3}$ is the mass critical exponent for \eqref{equt}, $p<\frac{14}{3}$ and $p>\frac{14}{3}$ are referred to as the mass subcritical and supercritical exponents, respectively. It is well-known that $p=4$ and $p=6$ are the mass critical exponents for \eqref{equt1} in $\R^2$ with $\gamma=0$ and $\gamma \neq 0$, respectively.
\end{rem}

\begin{thm} \label{instability}
Let $p>\frac{14}{3}$ and $\omega>0$. Let $u_{\omega} \in H^{1,2}(\R^2)$ be a ground state to \eqref{equ}. Then stand wave $e^{\textnormal{i} \omega t} u_{\omega}$ is unstable.
\end{thm}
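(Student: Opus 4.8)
The plan is to follow the classical Berestycki--Cazenave destabilization mechanism, adapted to the anisotropic scaling of \eqref{equ}. Fix a ground state $u_\omega$ and introduce the mass-preserving anisotropic dilation $u^\lambda(x,y):=\lambda^{3/4}u(\lambda x,\lambda^{1/2}y)$, which leaves $M$ invariant while sending the kinetic part $K(u):=\|\partial_x u\|_2^2+\|\partial_{yy}u\|_2^2$ to $\lambda^2 K(u)$ and $\|u\|_p^p$ to $\lambda^{\gamma}\|u\|_p^p$ with $\gamma:=3(p-2)/4$. Hence $E(u_\omega^\lambda)=\tfrac{\lambda^2}2 K(u_\omega)-\tfrac{\lambda^\gamma}p\|u_\omega\|_p^p$, and $p>\tfrac{14}{3}$ is exactly the condition $\gamma>2$. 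Since $u_\omega$ solves \eqref{equ} and the curve $\lambda\mapsto u_\omega^\lambda$ preserves the mass, the scaling functional $P(u):=\tfrac{d}{d\lambda}E(u^\lambda)|_{\lambda=1}=K(u)-\tfrac\gamma p\|u\|_p^p$ vanishes at $u_\omega$; together with $\gamma>2$ this makes $\lambda=1$ a strict local (indeed global) maximum of $\lambda\mapsto E(u_\omega^\lambda)$ and gives $P(u_\omega^\lambda)=\tfrac\gamma p\|u_\omega\|_p^p(\lambda^2-\lambda^\gamma)<0$ for $\lambda>1$. As $\lambda\to1^+$ one has $u_\omega^\lambda\to u_\omega$ in $H^{1,2}(\R^2)$, so these will furnish the destabilizing perturbations.

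Next I would establish the Pohozaev-type characterization $S_\omega(u_\omega)=\inf\{S_\omega(u): u\in H^{1,2}(\R^2)\setminus\{0\},\ P(u)=0\}$, where $S_\omega:=E+\tfrac\omega2 M$ is the (conserved) action associated with \eqref{equ}; this should follow from the variational construction behind Theorem \ref{groundstate} once one checks that a minimizer on the manifold $\{P=0\}$ is a free critical point, i.e. the Lagrange multiplier vanishes. Because $E$ and $M$ are both conserved, so is $S_\omega$, and the set $\mathcal{K}:=\{u: S_\omega(u)<S_\omega(u_\omega),\ P(u)<0\}$ is invariant under the flow: along a trajectory $P$ cannot reach $0$ without forcing $S_\omega\ge S_\omega(u_\omega)$, while mass conservation keeps $\psi(t)\ne0$. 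Choosing $\psi_0=u_\omega^\lambda$ with $\lambda>1$ small places the solution in $\mathcal{K}$ with fixed action $S_\omega(\psi_0)=S_\omega(u_\omega)-\eta$, $\eta>0$. A scaling argument on this level set then yields a uniform gap $P(\psi(t))\le-\delta<0$ for all $t$: if $P(v_n)\to0^-$ with $S_\omega(v_n)=S_\omega(u_\omega)-\eta$, the scaling maximizer $\lambda_0(v_n)\to1$ and $\sup_\lambda S_\omega(v_n^\lambda)\to S_\omega(u_\omega)-\eta<S_\omega(u_\omega)$, contradicting the characterization.

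Finally I would close the argument with an anisotropic virial identity. Writing $V(t):=\int_{\R^2}a(x,y)|\psi|^2\,dxdy$ for a weight behaving like $x^2$ in the $x$-variable and suitably adapted in $y$, the goal is $V''(t)=cP(\psi(t))+(\text{lower order})$, so that on $\mathcal{K}$ one obtains $V''(t)\le-c\delta<0$; since $V\ge0$ this forces $V$ to vanish in finite time, whence $\|\psi(t)\|_{H^{1,2}}\to\infty$ and $\psi$ escapes every neighborhood of the bounded orbit $\mathcal{G}_\omega$ (in particular the solution fails to be global), which proves instability. The main obstacle is exactly this virial identity: for the fourth-order anisotropic operator $-\partial_{xx}+\partial_{yyyy}$, the second time derivative of $V$ produces, besides the expected $\|\partial_x\psi\|_2^2$, $\|\partial_{yy}\psi\|_2^2$ and $\|\psi\|_p^p$ terms, an intermediate contribution involving $\|\partial_y\psi\|_2^2$ that is absent from the energy; it must be dominated by interpolation, $\|\partial_y\psi\|_2^2\le\|\psi\|_2\|\partial_{yy}\psi\|_2$, and absorbed, and the whole computation must be justified for merely $H^{1,2}(\R^2)$ data of finite variance through truncation of the weight and a limiting procedure in the spirit of Boulenger--Lenzmann. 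As a consistency check one may instead compute $d(\omega):=S_\omega(u_\omega)$ from the $\omega$-scaling $u_\omega(x,y)=\omega^{1/(p-2)}u_1(\omega^{1/2}x,\omega^{1/4}y)$, obtaining $d(\omega)=C\omega^{2/(p-2)+1/4}$ and hence $d''(\omega)<0\iff p>\tfrac{14}{3}$, which matches the Grillakis--Shatah--Strauss slope criterion for instability.
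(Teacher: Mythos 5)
Your variational scaffolding is essentially the paper's own: your $P$ coincides with the Pohozaev functional $Q$ of Lemma \ref{ph}, your characterization $S_\omega(u_\omega)=\inf\{S_\omega(u):u\neq 0,\ P(u)=0\}$ is exactly \eqref{mp}, your invariant set $\mathcal{K}$ with the uniform gap $P(\psi(t))\le -\delta$ is the paper's $\mathcal{S}^-$ (Lemma \ref{invariant}), and your consistency check $d(\omega)=C\omega^{2/(p-2)+1/4}$ is correct. The genuine gap is the closing step, which you yourself flag as ``the main obstacle'' but do not resolve: there is no weight $a(x,y)$ for which $V(t)=\int_{\R^2} a\,|\psi|^2\,dxdy$ satisfies $V''=cP(\psi)+(\text{absorbable errors})$. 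For the weight $|x|^2$ the paper computes (Lemma \ref{virial}) $\frac{d^2}{dt^2}\int_{\R^2}|x|^2|\psi|^2\,dxdy=8\|\partial_x\psi\|_2^2-\frac{4(p-2)}{p}\|\psi\|_p^p$: the term $\|\partial_{yy}\psi\|_2^2$ is entirely absent and the coefficient of the nonlinearity does not match $Q$, so this quantity cannot detect the sign of $Q$. For any weight depending on $y$, the computation degenerates at the \emph{first} derivative: since the $y$-part of the operator is fourth order, $\frac{d}{dt}\int_{\R^2} y^2|\psi|^2\,dxdy=8\,\mathrm{Im}\int_{\R^2} y\,\partial_y\bar\psi\,\partial_{yy}\psi\,dxdy$, a third-order expression rather than the dilation quantity, and differentiating again produces terms like $\|\partial_y\psi\|_2^2$ with no sign. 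Your proposed fix, $\|\partial_y\psi\|_2^2\le\|\psi\|_2\|\partial_{yy}\psi\|_2$, cannot absorb them: on the invariant set $\|\partial_{yy}\psi\|_2$ is not bounded by $|Q(\psi)|$ or by the conserved quantities, so the ``error'' can dominate the good term $cQ\le -c\delta$. This obstruction is exactly why the paper's blowup result (Theorem \ref{blowup}) requires much stronger hypotheses ($p\ge 6$ with $E(\psi_0)<0$, or $p>10$ with $\psi_0\in\mathcal{B}$) and does not cover all $p>\frac{14}{3}$; a proof of Theorem \ref{instability} via finite-time blowup in the full range $p>\frac{14}{3}$ is not available.

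The paper closes the argument by a virial-free mechanism in the spirit of Shatah--Strauss, and this is the idea your proposal is missing. It introduces $\psi_\omega=\frac38 u_\omega+\frac12 x\partial_x u_\omega+\frac14 y\partial_y u_\omega$ (the generator of your anisotropic dilation) and the functional $A(u)=-\langle e^{\textnormal{i}\alpha(u)}u,\textnormal{i}\psi_\omega\rangle$, which is \emph{bounded} on the tube $\mathcal{U}_{\eps}(u_\omega)$ and satisfies $\frac{d}{dt}A(\psi(t))=Q(\psi(t))$ along the flow (Lemmas \ref{propau}--\ref{coer}). Once the uniform gap $|Q(\psi(t))|\ge\delta_0$ is in hand, boundedness of $A$ on the tube forces the solution to exit $\mathcal{U}_{\eps}(u_\omega)$ in finite time; this is precisely orbital instability, with no finite-variance assumption, no blowup, and no virial identity. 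In short: keep your Steps 1--2, but replace your Step 3 by a bounded ``localized dilation'' functional of this type; as written, your Step 3 asks for an identity that does not exist for the anisotropic operator $-\partial_{xx}+\partial_{yyyy}$.
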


Since the non-degeneracy of ground states to \eqref{equ} is unkonwn, then we cannot adopt the elements in \cite{GSS} to establish Theorem \ref{instability}. In fact, the discussion of the non-degeneracy of ground states to \eqref{equ} is challengeable. Here, to discuss orbital instability of standing waves, we are inspired by the arguments in \cite{SS}, which depend mainly on the variational characterizations of ground states to \eqref{equ} instead of the spectral properties of the associated linearized operator. 
%In fact, the study in this aspect was inspired by \cite{BIK}. 

Finally, we plan to investigate dynamical behaviors of solutions to the dispersive equation \eqref{equt}, i.e. the global well-posedness and blowup of solutions to \eqref{equt}. Define
$$
Q(u):=\int_{\R^2} |\partial_x u|^2 \, dxdy + \int_{\R^2} |\partial_{yy} u|^2 \, dxdy -\frac{3(p-2)}{4p}\int_{\R^2} |u|^p \, dxdy.
$$
Here $Q(u)=0$ is Pohozaev's identity related to \eqref{equ}, see Lemma \ref{ph}. In this direction, the first result reads as follows, whose proof is based principally on the anisotropic Gagliardo-Nirenberg inequality in Lemma \ref{inequality}, the conservation laws and the variational characterizations of the ground states to \eqref{equ}. 

\begin{thm} \label{gw}
Let $p>2$ and $\psi \in C([0, T); H^{1,2}(\R^2))$ be the solution to the Cauchy problem for \eqref{equt} with initial datum $\psi_0 \in H^{1,2}(\R^2)$. Assume that one of the following conditions holds,
\begin{itemize}
\item [$(\textnormal{i})$] $2<p<\frac{14}{3}$.
\item [$(\textnormal{ii})$] $p=\frac{14}{3}$ and $\|\psi_0\|_2 < c_*$, where
$$
c_*:=\left(\frac{7}{3 C_{opt}} \right)^{\frac{3}{8}},
$$
$C_{opt}>0$ is the optimal constant for the anisotropic Gagliardo-Nirenberg inequality in Lemma \ref{gn}.
\item [$(\textnormal{iii})$] $p>\frac{14}{3}$ and $\psi_0 \in \mathcal{G}$, where
$$
\mathcal{G}:=\left\{ u \in H^{1,2}(\R^2) \backslash \{0\} : J_{\omega}(u) <m_{\omega}, Q(u)>0\right\}.
$$
\end{itemize}
Then $\psi(t)$ exists globally in time, i.e. $T=+\infty$.
\end{thm}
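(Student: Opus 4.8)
The plan is to combine the conservation laws with the blow-up alternative of Theorem~\ref{lw}(ii). Since the mass $M(\psi(t))=\|\psi(t)\|_2^2$ is conserved, the norm $\|\psi(t)\|_{H^{1,2}}$ stays bounded on $[0,T)$ as soon as the dispersive quantity
$$
D(u):=\int_{\R^2}|\partial_x u|^2\,dxdy+\int_{\R^2}|\partial_{yy}u|^2\,dxdy
$$
remains bounded, and by the blow-up alternative this forces $T=+\infty$. Thus the whole proof reduces to an a priori bound on $D(\psi(t))$, uniform in $t$. The only nonlinear input is the anisotropic Gagliardo-Nirenberg inequality of Lemma~\ref{inequality}, which in the notation above takes the form
$$
\int_{\R^2}|u|^p\,dxdy\le C_{opt}\,D(u)^{\frac{3(p-2)}{8}}\,\|u\|_2^{\frac{p+6}{4}},
$$
the exponent $\tfrac{3(p-2)}{8}$ equalling $1$ exactly at the mass-critical value $p=\tfrac{14}{3}$.

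In cases (i) and (ii) I would argue directly from the energy. Writing $E(u)=\tfrac12 D(u)-\tfrac1p\int_{\R^2}|u|^p\,dxdy$ and using the conservation of $E$ and $M$ together with the displayed inequality,
$$
E(\psi_0)=E(\psi(t))\ge \tfrac12 D(\psi(t))-\tfrac{C_{opt}}{p}\,D(\psi(t))^{\frac{3(p-2)}{8}}\|\psi_0\|_2^{\frac{p+6}{4}}.
$$
When $2<p<\tfrac{14}{3}$ the exponent $\tfrac{3(p-2)}{8}$ is strictly less than $1$, so the subtracted term is sublinear in $D(\psi(t))$ and the inequality confines $D(\psi(t))$ to a set depending only on $E(\psi_0)$ and $\|\psi_0\|_2$. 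When $p=\tfrac{14}{3}$ the same inequality becomes
$$
E(\psi_0)\ge D(\psi(t))\Big(\tfrac12-\tfrac{C_{opt}}{p}\|\psi_0\|_2^{\frac{p+6}{4}}\Big),
$$
and since $\tfrac{p+6}{4}=\tfrac83$ the parenthesis is positive precisely when $\|\psi_0\|_2<\big(\tfrac{7}{3C_{opt}}\big)^{3/8}=c_*$; this yields the uniform bound on $D(\psi(t))$ in case (ii).

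For the mass-supercritical case (iii) I would first record the algebraic identity obtained by eliminating $\int_{\R^2}|u|^p\,dxdy$ between $E$ and $Q$,
$$
E(u)=\frac{3p-14}{6(p-2)}\,D(u)+\frac{4}{3(p-2)}\,Q(u),
$$
whose two coefficients are positive for $p>\tfrac{14}{3}$. Next I would show that $\mathcal{G}$ is invariant under the flow. The condition $J_\omega<m_\omega$ is preserved because $J_\omega(\psi(t))=E(\psi(t))+\tfrac{\omega}{2}M(\psi(t))$ is conserved, hence equals $J_\omega(\psi_0)<m_\omega$ for all $t$. For the condition $Q>0$, the map $t\mapsto Q(\psi(t))$ is continuous on $[0,T)$ (from $\psi\in C([0,T);H^{1,2})$ and the embedding $H^{1,2}\hookrightarrow L^p$); were it to reach $0$ at some first time $t_0$, then $v:=\psi(t_0)$ would be nonzero by mass conservation and satisfy $Q(v)=0$ while $J_\omega(v)<m_\omega$, contradicting the variational characterization $J_\omega(v)\ge m_\omega$ valid for every nonzero $v$ with $Q(v)=0$. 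Therefore $\psi(t)\in\mathcal{G}$ for all $t$, and on $\mathcal{G}$ the identity above together with $Q(\psi(t))>0$ gives
$$
E(\psi_0)=E(\psi(t))\ge \frac{3p-14}{6(p-2)}\,D(\psi(t)),
$$
that is $D(\psi(t))\le \tfrac{6(p-2)}{3p-14}E(\psi_0)$, the desired uniform bound.

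The \emph{main obstacle} is the variational characterization used in case (iii): that every nonzero $v$ with $Q(v)=0$ satisfies $J_\omega(v)\ge m_\omega$, equivalently $m_\omega=\inf\{J_\omega(v):v\in H^{1,2}(\R^2)\setminus\{0\},\ Q(v)=0\}$. I would prove it through the mass-preserving anisotropic rescaling $v_\lambda(x,y):=\lambda^{3/2}v(\lambda^2 x,\lambda y)$, under which $Q(v_\lambda)=\tfrac{\lambda}{2}\tfrac{d}{d\lambda}J_\omega(v_\lambda)$ and, since $\tfrac{3(p-2)}{2}>4$ for $p>\tfrac{14}{3}$, the map $\lambda\mapsto J_\omega(v_\lambda)$ has a single critical point, a global maximum attained at the unique $\lambda_*$ with $Q(v_{\lambda_*})=0$. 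This identifies the $Q$-constrained infimum with the min-max level $\inf_{v\ne0}\max_{\lambda>0}J_\omega(v_\lambda)$, which must then be matched with the ground-state action $m_\omega$ from Theorem~\ref{groundstate}; carrying out this identification (the existence and Lagrange-multiplier analysis of the constrained minimizer, or an equivalent min-max comparison) is the delicate point, all remaining steps being routine consequences of the conservation laws and Lemma~\ref{inequality}.
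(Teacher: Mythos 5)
Your proposal is correct and follows essentially the same route as the paper: cases (i) and (ii) are handled exactly as in the paper by combining the anisotropic Gagliardo--Nirenberg inequality \eqref{gn} with the conservation laws (including the same computation identifying $c_*$), and case (iii) rests, as in the paper, on the identity $E(u)=\frac{3p-14}{6(p-2)}\left(\|\partial_x u\|_2^2+\|\partial_{yy}u\|_2^2\right)+\frac{4}{3(p-2)}Q(u)$ together with the variational characterization $m_\omega=\inf\{J_\omega(v): v\neq 0,\ Q(v)=0\}$, your flow-invariance argument being equivalent to the paper's contradiction argument (which supposes blowup, deduces $Q(\psi(t))\to-\infty$, and finds a first time where $Q$ vanishes). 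The ``delicate point'' you single out is precisely the paper's equation \eqref{mp}, which the paper likewise invokes without a detailed proof (it is stated as an observation in the proof of Theorem \ref{stability}), and your scaling/min-max sketch for it is the same mechanism the paper uses in Lemma \ref{vc0} to prove the analogous characterization of $m_\omega$ via the constraint $K=0$.
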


\begin{rem}
\begin{itemize}
\item[$(\textnormal{i})$] When $\gamma=0$ and $n=2$, then solutions to \eqref{equt1} exist globally in time for $p<4$ or $p=4$ and $\|\psi_0\|_2<N_c$. And solutions to \eqref{equt1} may blow up for for $p>4$ or $p=4$ and $\|\psi_0\|_2>N_c$. Theorem \ref{gw} justifies that the fourth order anisotropic dispersive term does help to stabilize solutions.
\item[$(\textnormal{ii})$] Since the higher order dispersion only occurs in one direction for \eqref{equt}, then the mass critical exponent is less than the one for \eqref{equt1} in $\R^2$ with $\gamma \neq 0$.
\end{itemize}
\end{rem}

Since \eqref{equ} is anisotropic, then there does not exist virial type quantities as introduced in the framework of the classical isotropic nonlinear Schr\"odinger equations to detect blowup of solutions to \eqref{equt}. In order to discuss blowup of solutions to \eqref{equt}, we need to introduce the so-called transverse virial quantity defined by
\begin{align}\label{v0}
V[u]:=\int_{\R^2} |x|^2 |u|^2 \,dxdy.
\end{align}
 To present the blowup result, we define
$$
K(u):=\frac 12 \int_{\R^2} |\partial_{yy} u|^2\, dxdy-\frac{p-2}{8p} \int_{\R^2}|u|^p\,dxdy.
$$
Here the functional $K$ appears in an alternative variational characterization of the ground states to \eqref{equ}. Indeed,  it is not hard to conclude that if $u \in H^{1,2}(\R^2)$ is a solution to \eqref{equ}, then $K(u)=0$. %, see Lemma \ref{vc0}.

\begin{thm} \label{blowup}
Let $p>\frac{14}{3}$ and $\psi \in C([0, T); H^{1,2}(\R^2))$ be the solution to the Cauchy problem for \eqref{equt} with initial datum $\psi_0 \in H^{1,2}(\R^2)$ satisfying $x u_0 \in L^2(\R^2)$. Assume that one of the following conditions holds,
\begin{itemize}
\item [$(\textnormal{i})$] $p \geq 6$ and $E(\psi_0) < 0$.
\item [$(\textnormal{ii})$] $p>10$ and $\psi_0 \in \mathcal{B}$ , where $\mathcal{B}:=\mathcal{B}_1 \cap \mathcal{B}_2$,
$$
\mathcal{B}_1:=\left\{ u \in H^{1,2}(\R^2) \backslash \{0\} : J_{\omega}(u) <m_{\omega}, Q(u)<0\right\},
$$
$$
\mathcal{B}_2:=\left\{ u \in H^{1,2}(\R^2) \backslash \{0\} : J_{\omega}(u) <m_{\omega}, K(u)>0\right\}.
$$
\end{itemize}
Then $\psi(t)$ blows up in finite time, i.e. $T<+\infty$.
\end{thm}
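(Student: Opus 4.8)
The plan is to derive a transverse virial identity for $V[\psi(t)]$ and to read off the sign of its second derivative from the functionals $Q$ and $K$. First I would differentiate $t \mapsto V[\psi(t)]$ twice along the flow of \eqref{equt}, substituting $\partial_t\psi = \mathrm{i}(\partial_{xx}\psi - \partial_{yyyy}\psi + |\psi|^{p-2}\psi)$. The key simplification is that, because the weight $|x|^2$ is independent of $y$, integrations by parts in the $y$ variable eliminate every contribution of the fourth order term $\partial_{yyyy}\psi$. A routine computation then gives
\[
\frac{d}{dt}V[\psi(t)] = 4\,\mathrm{Im}\int_{\R^2} x\,\overline{\psi}\,\partial_x\psi\,dxdy,
\]
and, differentiating once more,
\[
\frac{d^2}{dt^2}V[\psi(t)] = 8\int_{\R^2}|\partial_x\psi|^2\,dxdy - \frac{4(p-2)}{p}\int_{\R^2}|\psi|^p\,dxdy = 8\,Q(\psi(t)) - 16\,K(\psi(t)),
\]
the last equality being the algebraic identity $Q - 2K = \|\partial_x\psi\|_2^2 - \tfrac{p-2}{2p}\|\psi\|_p^p$. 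This single identity drives both cases. A technical point to settle first is that $V[\psi(t)]$ is finite and twice differentiable; since \eqref{equt} is of fourth order I would justify this by truncating the weight $|x|^2$, deriving the identity with controlled error terms, and passing to the limit, using $xu_0 \in L^2(\R^2)$ together with the blow-up alternative of Theorem \ref{lw}.

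For part (i), I would rewrite the virial identity through the conserved energy $E$. Using $E = \tfrac12\|\partial_x\psi\|_2^2 + \tfrac12\|\partial_{yy}\psi\|_2^2 - \tfrac1p\|\psi\|_p^p$ one obtains
\[
\frac{d^2}{dt^2}V[\psi(t)] = 16\,E(\psi_0) - 8\|\partial_{yy}\psi\|_2^2 - \frac{4(p-6)}{p}\|\psi\|_p^p .
\]
When $p \geq 6$ the last two terms are nonpositive, so $\frac{d^2}{dt^2}V[\psi(t)] \leq 16\,E(\psi_0) < 0$. A double integration then bounds $V[\psi(t)]$ above by a downward parabola in $t$, which becomes negative in finite time; since $V \geq 0$, the maximal existence time cannot be infinite, and $T < +\infty$ follows from the blow-up alternative.

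For part (ii) the sign of the right-hand side comes from the variational structure rather than the energy. Here $J_\omega = E + \tfrac{\omega}{2}M$ is conserved, so $J_\omega(\psi(t)) = J_\omega(\psi_0) < m_\omega$ for all $t$. I would then prove that $\mathcal{B} = \mathcal{B}_1 \cap \mathcal{B}_2$ is invariant under the flow: if $Q(\psi(t^*)) = 0$ or $K(\psi(t^*)) = 0$ at some first time $t^*$, then $\psi(t^*) \neq 0$ by mass conservation, and the alternative variational characterisations $m_\omega = \inf\{J_\omega(u) : u \neq 0,\ Q(u) = 0\} = \inf\{J_\omega(u) : u \neq 0,\ K(u) = 0\}$ (cf. Lemma \ref{ph} and the discussion preceding the theorem) would force $J_\omega(\psi(t^*)) \geq m_\omega$, a contradiction; hence $Q(\psi(t)) < 0$ and $K(\psi(t)) > 0$ persist by continuity. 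A fibering-map argument along the $L^2$-preserving anisotropic scaling $u \mapsto s^{3/4}u(s\,\cdot,s^{1/2}\,\cdot)$, whose nonlinear power $\tfrac{3(p-2)}{4}$ exceeds the quadratic power precisely because $p > \tfrac{14}{3}$, upgrades $Q(\psi(t)) < 0$ to a uniform bound $Q(\psi(t)) \leq -\delta < 0$ depending only on the gap $m_\omega - J_\omega(\psi_0)$. Since $K(\psi(t)) > 0$, the virial identity yields $\frac{d^2}{dt^2}V[\psi(t)] \leq 8\,Q(\psi(t)) \leq -8\delta$, and the concavity argument of part (i) gives $T < +\infty$.

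The main obstacle is the invariance of $\mathcal{B}_2$, and in particular the characterisation of $m_\omega$ through $K$. This rests on the one-variable fibering map $\mu \mapsto J_\omega(\mu^{1/2}u(\cdot,\mu\,\cdot))$, whose two competing powers are $\mu^4$, coming from the biharmonic term $\|\partial_{yy}u\|_2^2$, and $\mu^{p/2-1}$, coming from the nonlinearity $\|u\|_p^p$. This map has the single interior maximum needed for both the $K$-characterisation of $m_\omega$ and the invariance of $\{K>0\}$ exactly when $p/2 - 1 > 4$, i.e. $p > 10$, which is the source of that hypothesis. Verifying that the $K$-characterisation genuinely coincides with $m_\omega$ and is compatible with the $Q$-characterisation, so that $\mathcal{B}_1$ and $\mathcal{B}_2$ are simultaneously invariant, is the delicate part of the argument; the remainder is the standard convexity-of-$V$ blow-up mechanism.
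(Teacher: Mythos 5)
Your proposal is correct and follows essentially the same route as the paper: the transverse virial identity of Lemma \ref{virial}, the energy rewriting $16E(\psi_0)-8\|\partial_{yy}\psi\|_2^2-\tfrac{4(p-6)}{p}\|\psi\|_p^p$ for case (i), and for case (ii) the invariance of $\mathcal{B}$ via conservation of $J_\omega$ together with the two variational characterizations of $m_\omega$ (through $Q=0$, and through $K=0$ via Lemma \ref{vc0}, which is where $p>10$ enters exactly as you explain), followed by the tangent-line/fibering bound $Q(\psi(t))<J_\omega(\psi_0)-m_\omega<0$. Your compact identity $\tfrac{d^2}{dt^2}V=8Q-16K$ is precisely the paper's inequality $\tfrac{d^2}{dt^2}V=8Q-8\|\partial_{yy}\psi\|_2^2+\tfrac{2(p-2)}{p}\|\psi\|_p^p<8Q$ written out, so the arguments coincide.
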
 

\begin{rem}
\begin{itemize}
\item[$(\textnormal{i})$] The condition $p>10$ is required to establish an alternative variational characterizations of the ground states to \eqref{equ}. %Note that $p=6$ is the mass critical exponent for nonlinear Schr\"odinger equations in one dimension and $p=10$ is the mass critical exponent for fourth order nonlinear Schr\"odinger equations in one dimension.
\item[$(\textnormal{ii})$] To analysis blowup of solutions to the anisotropic nonlinear Schr\"odinger equations, we need to introduce the associated transverse virial quantity. Therefore, initial data belong to $\mathcal{B}_1$, which can no longer guarantee blowup of solutions. This is different from the study of blowup to solutions for the classical nonlinear Schr\"odinger equations.
\end{itemize} 
\end{rem}

The plan of the present paper is as follows. In Section \ref{pre}, we prove the anisotropic Gagliardo-Nirenberg inequality in $H^{1,2}(\R^2)$ and Pohozaev's identity satisfied by solutions to \eqref{equ}. In Section \ref{localw}, we study the local well-posedness of solutions to \eqref{equt} and prove Theorem \ref{lw}. In Section \ref{properties}, we discuss the existence and some properties of solutions to \eqref{equ} and establish Theorems \ref{groundstate}-\ref{instability}. Section \ref{dynamics} is devoted to the study of the global well-posedness and blowup of solutions to \eqref{equt}, which contains the proofs of Theorems \ref{gw} and \ref{blowup}.

To conclude this section, we shall introduce a scaling of $u \in H^{1,2}(\R^2)$ as
$$
u_\lambda(x)=\lambda^{\frac 38}u(\lambda^{\frac 12}x ,\lambda^{\frac 14} y), \quad \lambda>0.
$$
Such a scaling will be frequently used in the following discussion. It is straightforward to calculate that $\|u_{\lambda}\|_2=\|u\|_2$ and
$$
E(u_{\lambda})=\frac {\lambda}{2} \int_{\R^2} |\partial_x u|^2 \, dxdy + \frac {\lambda}{2} \int_{\R^2} |\partial_{yy} u|^2 \, dxdy-\frac {\lambda^\frac {3(p-2)}{8}}{p} \int_{\R^2} |u|^p \, dxdy.
$$
In addition, we see that

\begin{equation}\label{eq.Q93}
\begin{aligned}
\frac{d}{d \lambda} E(u_{\lambda})\mid_{\lambda=1}&=\frac {1}{2} \int_{\R^2} |\partial_x u|^2 \, dxdy + \frac {1}{2} \int_{\R^2} |\partial_{yy} u|^2 \, dxdy-\frac {3(p-2)}{8p} \int_{\R^2} |u|^p \, dxdy \\
&=\frac 12 Q(u).
\end{aligned}
\end{equation}

\begin{rem}
The mass critical exponent $p$ can be also defined by comparing the powers of the kinetic and potential energy in $E(u_\lambda)$, i.e. $1=3(p-2)/8$, which implies that $p=14/3$.   
\end{rem}

\section{Preliminary results} \label{pre}
%\begin{lem} 
%Let $q>2$. Then, for any $u \in H^1(\R)$,
%\begin{align} \label{gn1}
%\|u\|_q^q \leq C_{opt, 1} \|u\|_2^{\frac{q+2}{2}} \|\partial_x u\|_2^{\frac{q-2}{2}},
%\end{align}
%where $C_{opt, 1}>0$ is the optimal constant. % given by
%$$
%C_{opt, 1}=\frac{2q}{q-2} \frac{1}{\|Q\|_2^{\frac{q+2}{2}} \|\partial_x Q\|_2^{\frac{q-6}{2}}}
%$$
%$Q \in H^1(\R)$ is the ground state to the equation
%$$
%-\partial_{xx} Q+\omega Q=|Q|^{q-2} Q.
%$$
%\end{lem}

%\begin{lem} 
%Let $q>2$. Then, for any $u \in H^2(\R)$,
%\begin{align} \label{gn2}
%\|u\|_q^q \leq C_{opt, 2} \|u\|_2^{\frac{3q+2}{4}} \|\partial_{xx} u\|_2^{\frac{q-2}{4}},
%\end{align}
%where $C_{opt, 2}>0$ is the optimal constant.
%\end{lem}

In this section, we shall present some preliminary results used to establish the main results. First of all, we shall present the anisotropic Gagliardo-Nirenberg inequality in $H^{1,2}(\R^2)$. To prove this, we need the following concentration compactness lemma.

%\subsection{Anisotropic Gagliardo-Nirenberg inequality}

\begin{lem} \label{ccl}
Assume that $H^{1,2}(\R^2)$ is continuous embedded into $L^p(\R^2)$ for $p>2$. Let $\{u_n\} \subset H^{1,2}(\R^2)$ be a bounded sequence and 
$$
\sup_{z \in \R^2} \int_{B(z, R)} |u_n|^q \, dx=o_n(1).
$$ 
Then $u_n \to 0$ as $n \to \infty$ in $L^q(\R^N)$ for any $q>2$.
\end{lem}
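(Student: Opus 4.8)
The plan is to adapt Lions' classical vanishing lemma to the anisotropic energy space, exploiting the embedding hypothesis. Since $H^{1,2}(\R^2) \hookrightarrow L^p(\R^2)$ is assumed for every $p>2$, the sequence $\{u_n\}$ is automatically bounded in every $L^p$, $p\ge 2$. I will first reduce the claim to the vanishing of a \emph{single} Lebesgue norm: if I can show $u_n\to 0$ in $L^{s_0}(\R^2)$ for one exponent $s_0>2$, then for $2<s<s_0$ I interpolate between the (merely bounded) $L^2$ norm and $L^{s_0}$, and for $s>s_0$ between $L^{s_0}$ and a large auxiliary $L^{s_1}$; in both ranges the vanishing factor forces $\|u_n\|_{L^s}\to 0$. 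This recovers convergence in $L^q$ for every $q>2$ from convergence in the single $L^{s_0}$.

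Next I would set up the covering and the local interpolation. Cover $\R^2$ by balls $B_k=B(z_k,R)$ with lattice centers, so that the covering has a finite multiplicity $m$ uniform in $k$. Fixing $s$ slightly larger than the hypothesis exponent $q$ and a large $s_1>s$, Hölder interpolation on each $B_k$ gives $\|u\|_{L^s(B_k)}\le \|u\|_{L^q(B_k)}^{1-\lambda}\|u\|_{L^{s_1}(B_k)}^{\lambda}$ with $\lambda\in(0,1)$ small, followed by a \emph{local} embedding $\|u\|_{L^{s_1}(B_k)}\le C\|u\|_{H^{1,2}(B_k^*)}$ on a slightly enlarged ball $B_k^*$, the constant being independent of $k$ by translation invariance. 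Combining these yields
\[ \|u\|_{L^s(B_k)}^s \le C\,\|u\|_{L^q(B_k)}^{(1-\lambda)s}\,\|u\|_{H^{1,2}(B_k^*)}^{\lambda s}, \]
and since $s$ is close to $q$ one may keep $\lambda s\le 2$.

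I would then split and sum. Writing $\|u\|_{L^q(B_k)}^{(1-\lambda)s}=\|u\|_{L^q(B_k)}^{s-2}\cdot\|u\|_{L^q(B_k)}^{2-\lambda s}$, I estimate the first factor by the global smallness $\eta_n^{s-2}$, where $\eta_n:=\sup_{z}\|u_n\|_{L^q(B(z,R))}\to 0$, and absorb the second factor together with $\|u\|_{H^{1,2}(B_k^*)}^{\lambda s}$ into $C\|u_n\|_{H^{1,2}(B_k^*)}^2$ (using again $\|u\|_{L^q(B_k)}\le C\|u\|_{H^{1,2}(B_k^*)}$ and that the total exponent is $(2-\lambda s)+\lambda s=2$). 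Summing over $k$ and invoking the finite multiplicity,
\[ \int_{\R^2}|u_n|^s \le C\,\eta_n^{s-2}\sum_k\|u_n\|_{H^{1,2}(B_k^*)}^2 \le C\,m\,\eta_n^{s-2}\,\|u_n\|_{H^{1,2}(\R^2)}^2 \to 0, \]
which gives $u_n\to 0$ in $L^{s_0}$ with $s_0=s$, and by the reduction above completes the proof.

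The hard part is the \emph{local} embedding $\|u\|_{L^{s_1}(B_k)}\le C\|u\|_{H^{1,2}(B_k^*)}$ with a localized right-hand side, a point genuinely absent from the isotropic Lions lemma. Deriving it from the assumed global embedding requires a cutoff $\chi$, and here the anisotropy bites: while $\partial_x(\chi u)$ only produces the harmless term $(\partial_x\chi)u$, the second $y$-derivative $\partial_{yy}(\chi u)=\chi\partial_{yy}u+2(\partial_y\chi)(\partial_y u)+(\partial_{yy}\chi)u$ involves the first-order derivative $\partial_y u$, which is \emph{not} part of the $H^{1,2}$ norm. I would control it through the localized interpolation estimate $\|\partial_y u\|_{L^2(B_k^*)}\le C\big(\|\partial_{yy}u\|_{L^2(B_k^{**})}+\|u\|_{L^2(B_k^{**})}\big)$, obtained by integrating $\int \phi^2|\partial_y u|^2$ by parts against a cutoff $\phi$ and absorbing, on a further enlarged ball $B_k^{**}$. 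Establishing this anisotropic localized Gagliardo--Nirenberg/embedding inequality with constants uniform across the covering is the crux; once it is in hand, the remainder is the standard covering-and-summation scheme.
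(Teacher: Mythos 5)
Your proposal is correct, and it is essentially the proof the paper intends: the paper omits the argument entirely, saying only to follow the ideas of Lions' vanishing lemma \cite[Lemma I.1]{Li2}, and your covering, local interpolation, and finite-multiplicity summation scheme is precisely that adaptation to the anisotropic norm. One remark: the step you single out as the crux is easier than you suggest, since by Plancherel $\|\partial_y u\|_{L^2(\R^2)}^2 \leq \tfrac{1}{2}\bigl(\|u\|_{L^2(\R^2)}^2 + \|\partial_{yy} u\|_{L^2(\R^2)}^2\bigr)$, so the term $(\partial_y\chi)\,\partial_y u$ produced by the cutoff is controlled globally by the $H^{1,2}$-norm and sums over the finitely overlapping covering without any localized integration-by-parts estimate (though your local argument is also valid).
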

\begin{proof}
Following the ideas of the proof of \cite[Lemma I.1]{Li2}, we can easily derive the desired conclusion. Then we omit the proof.
\end{proof}

\begin{lem} \label{inequality}
Let $p>2$. Then there exists $C_{opt}>0$ such that, for any $u \in H^{1,2}(\R^2)$,
\begin{align} \label{gn}
\|u\|_p^p \leq C_{opt} \|\partial_x u\|_2^{\frac{p-2}{2}} \|\partial_{yy} u\|_2^{\frac{p-2}{4}} \|u\|_2^{\frac{p+6}{4}}.
\end{align}
The optimal constant $C_{opt}>0$ is given by
$$
C_{opt}=\frac{p(p+6)^{\frac{3(p-2)}{8}-1}}{2^{\frac{p-2}{4}-2}(p-2)^{\frac {3(p-2)}{8}} \|W\|_2^{p-2}},
$$
where $W \in H^{1,2}(\R^2)$ is a ground state to the equation
$$
-\partial_{xx} W+ \partial_{yyyy} W + W=|W|^{p-2} W \quad \mbox{in} \,\, \R^2.
$$
\end{lem}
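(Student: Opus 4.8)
The plan is to realize the sharp constant as the extremal value of a Weinstein-type quotient. Set
$$
\cW(u):=\frac{\|u\|_p^p}{\|\partial_x u\|_2^{\frac{p-2}{2}}\,\|\partial_{yy} u\|_2^{\frac{p-2}{4}}\,\|u\|_2^{\frac{p+6}{4}}},\qquad u\in H^{1,2}(\R^2)\setminus\{0\},
$$
and put $C_{opt}:=\sup_{u\neq 0}\cW(u)$. A direct scaling computation shows that $\cW$ is invariant under the amplitude scaling $u\mapsto\tau u$ and under the anisotropic dilations $u\mapsto u(\sigma\,\cdot,\rho\,\cdot)$ with $\tau,\sigma,\rho>0$; since the three quantities $\|\partial_x u\|_2$, $\|\partial_{yy} u\|_2$, $\|u\|_2$ transform with linearly independent powers of $(\tau,\sigma,\rho)$, any extremizing sequence can be normalized so that $\|\partial_x u\|_2=\|\partial_{yy} u\|_2=\|u\|_2=1$. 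With this normalization the continuous embedding $H^{1,2}(\R^2)\hookrightarrow L^p(\R^2)$ gives at once $C_{opt}<\infty$, which is precisely the inequality \eqref{gn} with the optimality of the constant built in.

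The heart of the proof is the attainment of $C_{opt}$. I would take a normalized extremizing sequence $\{u_n\}$, bounded in $H^{1,2}(\R^2)$, with $\|u_n\|_p^p\to C_{opt}>0$. Lemma \ref{ccl} rules out vanishing: were $\sup_{z}\int_{B(z,R)}|u_n|^q\,dx\to 0$, then $u_n\to 0$ in $L^p(\R^2)$, contradicting $\|u_n\|_p^p\to C_{opt}$. Hence, after translating, $u_n\wto u_*$ in $H^{1,2}(\R^2)$ with $u_*\neq 0$. Writing $r_n:=u_n-u_*$, weak convergence splits the Hilbertian norms exactly, while the Brezis--Lieb lemma gives $\|u_n\|_p^p=\|u_*\|_p^p+\|r_n\|_p^p+o(1)$. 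Applying \eqref{gn} to $u_*$ and to $r_n$ and passing to the limit yields $1\le f(a,b,d)+f(1-a,1-b,1-d)$, where $f(a,b,d):=a^{\frac{p-2}{4}}b^{\frac{p-2}{8}}d^{\frac{p+6}{8}}$ and $a,b,d\in[0,1]$ record the fractions of the three norms retained by $u_*$. Since the exponents of $f$ sum to $p/2>1$, a weighted arithmetic--geometric mean estimate gives $f(a,b,d)+f(1-a,1-b,1-d)\le 1$, with equality forced to $(0,0,0)$ or $(1,1,1)$; as $u_*\neq 0$ the remainder must vanish, so $u_n\to u_*$ strongly and $u_*$ is an extremizer.

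An extremizer $u_*$ satisfies the Euler--Lagrange equation of the constrained problem, and using the three-parameter scaling group one rescales $u_*$ to a function $W$ solving exactly $-\partial_{xx}W+\partial_{yyyy}W+W=|W|^{p-2}W$; since extremizers carry the least value of $\|\cdot\|_p^p$ among all solutions, they are precisely the ground states. To evaluate $C_{opt}=\cW(W)$, write $A:=\|\partial_x W\|_2^2$, $B:=\|\partial_{yy}W\|_2^2$, $D:=\|W\|_2^2$ and $P:=\|W\|_p^p$. Testing the equation against $W$ gives the Nehari relation $A+B+D=P$, while differentiating $\cJ(\tau W(\sigma\,\cdot,\rho\,\cdot))$ in $\sigma$ and in $\rho$ at $(1,1,1)$ --- where $\cJ(u):=\frac12\|\partial_x u\|_2^2+\frac12\|\partial_{yy} u\|_2^2+\frac12\|u\|_2^2-\frac1p\|u\|_p^p$, and using that $W$ is a critical point --- produces the Pohozaev-type identities $A-B-D+\frac2pP=0$ and $-A+3B-D+\frac2pP=0$. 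Solving this linear system gives $A=\frac{p-2}{2p}P$, $B=\frac{p-2}{4p}P$, $D=\frac{p+6}{4p}P$, whence $P=\frac{4p}{p+6}\|W\|_2^2$; because these ratios are common to all solutions, the value $\|W\|_2$ is independent of the chosen ground state. Substituting into $C_{opt}=P\big/\bigl(A^{\frac{p-2}{4}}B^{\frac{p-2}{8}}D^{\frac{p+6}{8}}\bigr)$ and collecting the powers of $2$, $p$, $p-2$ and $p+6$ yields the announced closed form.

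The main obstacle is the compactness step. Because the anisotropy makes the three norms scale independently, one must confirm that, once vanishing has been excluded through Lemma \ref{ccl}, \emph{translations alone} restore compactness and that the remainder is genuinely negligible; this is exactly what the weighted arithmetic--geometric mean inequality delivers, with the condition $p/2>1$ (equivalently $p>2$) being indispensable. Once the extremizer is known to exist and to solve the normalized equation, the determination of $C_{opt}$ reduces to the two Pohozaev identities and is purely computational.
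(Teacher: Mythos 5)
Your attainment argument and your computation of the sharp constant follow essentially the same route as the paper: normalize an extremizing sequence via the three-parameter scaling group, exclude vanishing with Lemma \ref{ccl}, pass to a nonzero weak limit after translation, split the norms by Brezis--Lieb, and conclude strong convergence from the strict superadditivity coming from the exponent sum $\tfrac{p-2}{4}+\tfrac{p-2}{8}+\tfrac{p+6}{8}=\tfrac{p}{2}>1$; then rescale the extremizer to a ground state $W$ and evaluate $C_{opt}$ through the Nehari and Pohozaev identities. Your identities $A=\tfrac{p-2}{2p}P$, $B=\tfrac{p-2}{4p}P$, $D=\tfrac{p+6}{4p}P$ are correct and do reproduce the stated closed form, including the powers of $2$, $p$, $p-2$ and $p+6$.

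The genuine gap is in the very first step, the finiteness of $C_{opt}$. You dispose of it by invoking ``the continuous embedding $H^{1,2}(\R^2)\hookrightarrow L^p(\R^2)$'' as a known fact, but inside this paper that embedding is not an available input: it is stated as a \emph{consequence} of the inequality \eqref{gn}, and proving it is precisely the first (and analytically most substantial) half of the paper's proof. The paper handles $2<p\le 4$ by applying the one-dimensional fractional Gagliardo--Nirenberg inequality \eqref{fgn} in the $y$-variable with $\alpha=2$ and then in the $x$-variable with $\alpha=1$ (together with H\"older and the pointwise bound $\partial_x\|u(x,\cdot)\|_2\le\|\partial_x u(x,\cdot)\|_2$), and handles $p>4$ by the pointwise integration-by-parts argument of Fibich--Ilan--Schochet, splitting $p=p_1+p_2$ with $p_3=\tfrac{p-4}{5}$. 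Your reduction of the product-form inequality to the additive embedding via the scaling normalization is correct (the exponent matrix of the action of $(\tau,\sigma,\rho)$ on the three norms is nonsingular), but the embedding itself is never established, and it is not a triviality for this anisotropic space -- it is, up to constants, the content of the lemma. Note also that Lemma \ref{ccl} is stated \emph{conditionally} on this embedding (``Assume that $H^{1,2}(\R^2)$ is continuously embedded into $L^p(\R^2)$\dots''), so your compactness step rests on the same unproven fact. To close the gap you must either supply a direct proof of \eqref{gn} with some finite constant, as the paper does, or cite a classical anisotropic Sobolev embedding theorem; as written, the argument assumes what is essentially the statement to be proved.
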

\begin{proof}
%In view of \eqref{gn1}, \eqref{gn2}, H\"older's inequality and Minkowski's inequality, then
%\begin{align*}
%\|u\|_p^p \leq C_{opt, 1}\int_{\R} \|u\|_{L^2_x}^{\frac{p+2}{2}} \|\partial_x u\|_{L^2_x}^{\frac{p-2}{2}} \, dy &\leq C_{opt, 1} \left(\int_{\R} \|u\|_{L^2_x}^{\frac{2(p+2)}{6-p}} \, dy\right)^{\frac{6-p}{4}}\|\partial_x u\|_2^{\frac{p-2}{2}} \\
%&=C_{opt ,1}\left\|\left\|u\right\|_{L^2_x}\right\|_{L^{\frac{2(p+2)}{6-p}}_y}^{\frac{p+2}{2}}\|\partial_x u\|_2^{\frac{p-2}{2}} \\
%& \leq C_{opt, 1}\left\|\left\|u\right\|_{L^{\frac{2(p+2)}{6-p}}_y}\right\|_{L^2_x}^{\frac{p+2}{2}}\|\partial_x u\|_2^{\frac{p-2}{2}} \\
%& \leq C_{opt, 1} C_{opt, 2}^{\frac{(p+2)^2}{6-p}}\left\|\left\|u\right\|_{L^2_y}^{\frac 34 +\frac{6-p}{4(p+2)}}\left\|\partial_{yy} u\right\|_{L^2_y}^{\frac 14 -\frac{6-p}{4(p+2)}}\right\|_{L^2_x}^{\frac{p+2}{2}}\|\partial_x u\|_2^{\frac{p-2}{2}} \\
%& \leq C_{opt, 1} C_{opt, 2}^{\frac{(p+2)^2}{6-p}} \|u\|_2^{\frac{p+6}{4}}\left\|\partial_{yy} u\right\|_2^{\frac{p-2}{4}}\|\partial_x u\|_2^{\frac{p-2}{2}}.
%\end{align*}
Define 
\begin{align} \label{wmin}
I_{opt}:=\inf_{u \in H^{1,2}(\R^2) \backslash \{0\}} I(u),
\end{align}
where
$$ 
I(u):=\frac{\|\partial_x u\|_2^{\frac{p-2}{2}} \|\partial_{yy} u\|_2^{\frac{p-2}{4}} \|u\|_2^{\frac{p+6}{4}}}{\|u\|_p^p}.
$$
%In view of \cite[Lemma 1]{FIS}, then $I_{opt}>0$. 
First we are going to demonstrate that $I_{opt}>0$. Let us begin with treating the case that $2<p \leq 4$. In this case, we need to introduce the following fractional Gagliardo-Nirenberg inequality in $H^{\alpha}(\R)$,
\begin{align} \label{fgn}
\|f\|_r^r \leq C \|(-\partial_{xx})^{\frac{\alpha}{2}} f\|_2^{\frac{r-2}{2\alpha}} \|f\|_2^{r-\frac{r-2}{2\alpha}}, \quad r>2.
\end{align}
Using \eqref{fgn} with $r=p$ and $\alpha=2$, we obtain that
$$
\|u(x, \cdot)\|_p^p \lesssim \|\partial_{yy} u(x, \cdot)\|_2^{\frac{p-2}{4}} \|u(x, \cdot)\|_2^{p-\frac{p-2}{4}}.
$$
Applying \eqref{fgn} with $r=\frac{2(3p+2)}{10-p}$ and $\alpha=1$ and H\"older's inequality, we then have that
\begin{align} \label{fgn1}
\begin{split}
\int_{\R^2} |u|^p \,dxdy &\lesssim \int_{\R} \|\partial_{yy} u(x, \cdot)\|_2^{\frac{p-2}{4}} \|u(x, \cdot)\|_2^{p-\frac{p-2}{4}}\,dx \\
& \leq \left(\int_{\R} \|u(x, \cdot)\|_2^{\frac{2(3p+2)}{10-p}} \,dy \right)^{\frac{10-p}{8}} \|\partial_{yy} u\|_2^{\frac{p-2}{4}} \\
& \lesssim \left\|\partial_{x}\left(\|u(x, \cdot)\|_2\right) \right\|^{\frac{p-2}{2}}_2 \|\partial_{yy} u\|_2^{\frac{p-2}{4}}  \|u\|_2^{\frac{p+6}{4}}.
\end{split}
\end{align}
In addition, we note that
$$
\partial_{x}\left(\|u(x, \cdot)\|_2\right)\|u(x, \cdot)\|_2=\frac 12 \partial_x\left(\|u(x, \cdot)\|_2^2\right) \leq \|\partial_x u(x, \cdot)\|_2 \|u(x, \cdot)\|_2.
$$
It clearly implies that
$$
\partial_{x}\left(\|u(x, \cdot)\|_2\right) \leq \|\partial_x u(x, \cdot)\|_2.
$$
Coming back to \eqref{fgn1}, we then get that
$$
\|u\|_p^p \lesssim \|\partial_x u\|_2^{\frac{p-2}{2}} \|\partial_{yy} u\|_2^{\frac{p-2}{4}} \|u\|_2^{\frac{p+6}{4}}.
$$
Let us now deal with the case $p>4$, which was inspired by \cite{FIS}. Observe that
\begin{align*}
|u|^{p_1}&=p_1 \int_{-\infty}^{x} |u|^{p_1-2} u\partial_x u \,dx \leq p_1 \int_{\R} |u|^{p_1-1} |\partial_x u| \,dx \\
& \leq p_1 \left(\int_{\R} |u|^{2(p_1-1)} \,dx\right)^{\frac 12} \left(\int_{\R} |\partial_x u|^2 \,dx \right)^{\frac 12}.
\end{align*}
In addition, we note that
\begin{align*}
|u|^{p_2} \leq &  p_2 \int_{\R} |u|^{p_2-1} |\partial_y u| \,dy=p_2 \int_{\R} |u|^{p_2-p_3-1} \left(|u|^{p_3}|\partial_y u| \right) \,dy \\
& \leq \frac{p_2}{\sqrt{2p_3+1}} \left(\int_{\R} |u|^{2(p_2-p_3-1)} \,dy\right)^{\frac 12}  \left(\int_{\R} \partial_{y} \left(|u|^{2p_3} u\right) \partial_y u \, dy\right)^{\frac 12} \\
& \leq \frac{p_2}{\sqrt{2p_3+1}}  \left(\int_{\R} |u|^{2(p_2-p_3-1)} \,dy\right)^{\frac 12}  \left(\int_{\R} |u|^{2p_3+1} |\partial_{yy} u| \, dy\right)^{\frac 12} \\
& \leq  \frac{p_2}{\sqrt{2p_3+1}}  \left(\int_{\R} |u|^{2(p_2-p_3-1)} \,dy\right)^{\frac 12} \left(\int_{\R} |u|^{2(2p_3+1)}\, dy\right)^{\frac 14} \left(\int_{\R}   |\partial_{yy} u|^2 \,dy \right)^{\frac 14}, 
\end{align*}
where $p_3>0$. Consequently, by H\"older's inequality, we have that
\begin{align} \label{fgn2}
\int_{\R^2} |u|^{p_1+p_2} \,dxdy \leq \frac{p_1p_2}{\sqrt{2p_3+1}}\|u\|_{2(p_2-p_3-1)}^{p_2-p_3-1} \|u\|_{2(2p_3+1)}^{\frac{2p_3+1}{2}}\|u\|_{2(p_1-1)}^{p_1-1} \|\partial_x u\|_2\|\partial_{yy} u\|_2^{\frac 12}.
\end{align}
Let 
$$
p_1=2(p_3+1), \quad p_2=3p_3+2, \quad p_3=\frac{p-4}{5}>0.
$$
It then follows that
$$
p_1+p_2=p, \quad 2(p_2-p_3-1)=2(2p_3+1)=2(p_1-1)=\frac{2(2p-3)}{5}>2.
$$
As a consequence, from \eqref{fgn2} and H\"older's inequality, we derive that
\begin{align*}
\int_{\R^2} |u|^p \,dx & \lesssim \|u\|_{\frac{2(2p-3)}{5}}^{\frac{2p-3}{2}}\|\partial_x u\|_2\|\partial_{yy} u\|_2^{\frac 12} \\
& \leq \|u\|_2^{\frac{(p+6)}{2(p-2)}}\|u\|_p^{\frac{p(p-4)}{p-2}}\|\partial_x u\|_2\|\partial_{yy} u\|_2^{\frac 12}.
\end{align*}
It immediately shows that
$$
\|u\|_p^p \lesssim \|\partial_x u\|_2^{\frac{p-2}{2}}\|\partial_{yy} u\|_2^{\frac {p-2}{4}}  \|u\|_2^{\frac{p+6}{4}}.
$$
Thereby, we have the desired conclusion. It indicates that $H^{1,2}(\R^2)$ is continuous embedded into $L^p(\R^2)$ for $p>2$.
%Let
%$$
%p_1+p_2=p, \quad 2(2p_3+1)=p, \quad 2(p_1-1)=p.
%$$
%It then follows that
%$$
%p_1=\frac p2 +1, \quad p_2=\frac p2-1, \quad p_3=\frac p 4-\frac 12.
%$$

In the followsing, we are going to show that $I_{opt}$ is achieved. For any $\mu, \lambda_1, \lambda_2 \in \R^+$ and $u \in H^{1,2}(\R^2)$, we define
\begin{align} \label{sc}
u_{\mu, \lambda_1, \lambda_2}(x, y):=\mu u(\lambda_1 x, \lambda_2 y).
\end{align}
It is simple to see that
$$
\|\partial_x u_{\mu, \lambda_1, \lambda_2}\|_2=\mu \lambda_1^{\frac 12} \lambda_2^{-\frac 12}\|\partial_x u\|_2, \quad \|\partial_{yy} u_{\mu, \lambda_1, \lambda_2}\|_2=\mu \lambda_1^{-\frac 12} \lambda_2^{\frac 32}\|\partial_{yy} u\|_2
$$
and
$$
\|u_{\mu, \lambda_1, \lambda_2}\|_2=\mu \lambda_1^{-\frac 12} \lambda_2^{-\frac 12}\|u\|_2, \quad \|u_{\mu, \lambda_1, \lambda_2}\|_p=\mu \lambda_1^{-\frac 1p} \lambda_2^{-\frac 1p}\|u\|_p.
$$
This immediately yields that $I(u_{\mu, \lambda_1, \lambda_2})=I(u)$. Let $\{u_n\} \subset H^{1,2}(\R^2) \backslash \{0\}$ be a minimizing sequence to \eqref{wmin}. Define $v_n:=(u_n)_{\mu_n, \lambda_{1,n}, \lambda_{2,n}}$, where
$$
\mu_n:=\frac{1}{\|\partial_x u_n\|_2^{\frac 12} \|\partial_{yy} u_n\|_2^{\frac 14}\|u_n\|_2^{\frac 14}}, \quad \lambda_{1,n}:= \frac{\|u_n\|_2}{\|\partial_x u_n\|_2}\quad \lambda_{2,n}:=\frac{\|u_n\|_2^{\frac 12}}{\|\partial_{yy}u_n\|_2^{\frac 12}}.
$$
As a consequence, we have that
\begin{align} \label{inf1}
\|\partial_x v_n\|_2=1, \quad \|\partial_{yy} v_n\|_2=1, \quad \|v_n\|_2=1
\end{align}
and
\begin{align} \label{inf2}
I(v_n)=\frac{1}{\|v_n\|_p^p}=I_{opt} +o_n(1).
\end{align}
Then $\{v_n\}$ is bounded in $H^{1,2}(\R^2)$. Due to $0<I_{opt}<+\infty$ and Lemma \ref{ccl}, then there exist a sequence $\{z_n\} \subset \R^2$ and a nontrivial $v \in H^{1,2}(\R^2)$ such that $v_n(\cdot+z_n) \wto v$ in $H^{1,2}(\R^2)$ as $n \to \infty$. Define $w_n:=v_n(\cdot+
z_n)$, then
$$
\|\partial_x w_n-\partial_x v\|_2^2+\|\partial_x v\|_2^2=\|\partial_x w_n\|_2^2+o_n(1),
$$
$$
\|\partial_{yy} w_n-\partial_{yy}v\|_2^2+\|\partial_{yy}v\|_2^2=\|\partial_{yy} w_n\|_2^2+o_n(1)
$$
and
$$
\|w_n-v\|_2^2+\|v\|_2^2=\|w_n\|_2^2+o_n(1), \quad \|w_n-v\|_p^p+\|v\|_p^p=\|w_n\|_p^p+o_n(1).
$$
Therefore, by using \eqref{inf1},  we have that
\begin{align*}
1+o_n(1)&=\|\partial_x w_n\|_2^{\frac{p-2}{4}} \|\partial_{yy} w_n\|^{\frac{p-2}{8}}\|w_n\|_2^{\frac{p+6}{8}}+o_n(1)\\
&=\left(\|\partial_x w_n-\partial_x v\|_2^2+\|\partial_x v\|_2^2+o_n(1)\right)^{\frac{p-2}{4}} \left(\|\partial_{yy} w_n-\partial_{yy}v\|_2^2+\|\partial_{yy}v\|_2^2+o_n(1)\right)^{\frac{p-2}{8}}\\
&\quad \times \left(\|w_n\|_2^2+\|v\|_2^2+o_n(1)\right)^{\frac{p+6}{8}} +o_n(1) \\
& \geq \|\partial_x w_n-\partial_x v\|_2^{\frac{p-2}{2}}\|\partial_{yy} w_n-\partial_{yy}v\|_2^{\frac{p-2}{4}}\|w_n-v\|_2^{\frac{p+6}{4}} +\|\partial_x v\|_2^{\frac{p-2}{2}}\|\partial_{yy}v\|_2^{\frac{p-2}{4}}\|v\|_2^{\frac{p+6}{4}} +o_n(1) \\
& \geq I_{opt}\|w_n-v\|_p^p+\|\partial_x v\|_2^{\frac{p-2}{2}}\|\partial_{yy}v\|_2^{\frac{p-2}{4}}\|v\|_2^{\frac{p+6}{4}} +o_n(1).
\end{align*}
On the other hand, by applying \eqref{inf2}, we derive that
\begin{align*}
I_{opt} \left(\|w_n-v\|_p^p+\|v\|_p^p+o_n(1)\right) +o_n(1)=I_{opt} \|v_n\|_p^p+o_n(1)=1+o_n(1).
\end{align*}
As a consequence, we obtain that 
$$
I_{opt} \geq \frac{\|\partial_x v\|_2^{\frac{p-2}{2}}\|\partial_{yy}v\|_2^{\frac{p-2}{4}}\|v\|_2^{\frac{p+6}{4}}}{\|v\|_p^p} +o_n(1).
$$
It then follows that $I_{opt}$ is attained by $v \in H^{1,2}(\R^2)$. Now we assume that $I_{opt}$ is achieved by $v \in H^{1,2}(\R^2)$ such that $\|\partial_x v\|_2=\|\partial_{yy} v\|_2=\|u\|_2=1$ and $\|v\|_p^p=C_{opt}$, where $C_{opt}:=1/I_{opt}$. Define
$$
\chi(t):=I(v+th), \quad t \in \R, \,\,\, h \in H^{1,2}(\R^2).
$$
Since $v \in H^{1,2}(\R^2)$ is a minimizer, then $\chi'(t)\mid_{t=0}=0$. This further gives that $v$ satisfies the equation
\begin{align} \label{infu}
-\frac{p-2}{2}\partial_{xx} v+ \frac {p-2}{4} \partial_{yyyy} v +\frac{p+6}{4} v=\frac{p} {C_{opt}} |v|^{p-2} v \quad \mbox{in} \,\, \R^2.
\end{align}
Note that $v$ is a minimizer, then it is a ground state to \eqref{infu}. Define $W:=\mu^{-1} u(\lambda_1^{-1} \cdot, \lambda_2^{-\frac 12} \cdot)$, 
%and $Q:= v(\lambda_1 \cdot, \lambda_2^{\frac 12} \cdot)$ for $\lambda_1, \lambda_2>0$, 
then $W \in H^{1,2}(\R^2)$ is a ground state to the equation
$$
-\frac{2(p-2) \lambda_1^{2}}{p+6}\partial_{xx} W+ \frac {(p-2) \lambda_2^2}{p+6} \partial_{yyyy} W + W=\frac{4\mu^{p-2}p}{C_{opt}(p+6)} |W|^{p-2} W.
$$
Let
$$
\frac{2(p-2) \lambda_1^{2}}{p+6}=1, \quad \frac {(p-2) \lambda_2^2}{p+6} =1, \quad \frac{4\mu^{p-2}p}{C_{opt}(p+6)}=1.
$$
Observe that
$$
1=\|u\|_2^2=\mu^2 \lambda_1^{-1} \lambda^{-\frac 12}_2 \|W\|_2^2.
$$
Then we can compute that
$$
C_{opt}=\frac{p(p+6)^{\frac{3(p-2)}{8}-1}}{2^{\frac{p-2}{4}-2}(p-2)^{\frac {3(p-2)}{8}} \|W\|_2^{p-2}}.
$$
This completes the proof.
\end{proof}

%\begin{cor}
%Let $p>2$. Then $H^{1,2}(\R^2)$ is continuously embedded into $L^p(\R^2)$ for any $p>2$.
%\end{cor}

\begin{lem} \label{ph}
Let $p>2$ and $u \in H^{1,2}(\R^2)$ be a solution to \eqref{equ}. Then $u$ satisfies the following Pohozaev's identity,
$$
\int_{\R^2} |\partial_x u|^2 \, dxdy + \int_{\R^2} |\partial_{yy} u|^2 \, dxdy =\frac{3(p-2)}{4p}\int_{\R^2} |u|^p \, dxdy.
$$
\end{lem}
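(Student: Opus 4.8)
The plan is to obtain Pohozaev's identity as the statement that the action associated to \eqref{equ} is stationary along the $L^2$-preserving scaling $u_\lambda(x,y)=\lambda^{3/8}u(\lambda^{1/2}x,\lambda^{1/4}y)$ introduced above. Recall that a solution $u$ of \eqref{equ} is in particular a weak solution, so that
$$
\int_{\R^2}\partial_x u\,\partial_x\bar\phi\,dxdy+\int_{\R^2}\partial_{yy}u\,\partial_{yy}\bar\phi\,dxdy+\omega\int_{\R^2}u\bar\phi\,dxdy=\int_{\R^2}|u|^{p-2}u\,\bar\phi\,dxdy
$$
for every admissible test function $\phi$. First I would choose for $\phi$ the infinitesimal generator of the scaling, namely
$$
\phi=\frac{d}{d\lambda}u_\lambda\Big|_{\lambda=1}=\frac38 u+\frac12\, x\,\partial_x u+\frac14\, y\,\partial_y u,
$$
and then take real parts throughout.

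The key point is that each term of the weak formulation is, up to a constant factor, the $\lambda$-derivative at $\lambda=1$ of the corresponding term in $E(u_\lambda)$ or in $\|u_\lambda\|_2^2$. Since $\|u_\lambda\|_2=\|u\|_2$ does not depend on $\lambda$, one has $\mathrm{Re}\int_{\R^2}u\bar\phi\,dxdy=0$, so the entire $\omega$-contribution disappears; this is precisely why $\omega$ is absent from the conclusion. For the three remaining terms I would use $\|\partial_x u_\lambda\|_2^2=\lambda\|\partial_x u\|_2^2$, $\|\partial_{yy}u_\lambda\|_2^2=\lambda\|\partial_{yy}u\|_2^2$ and $\|u_\lambda\|_p^p=\lambda^{3(p-2)/8}\|u\|_p^p$, exactly the computation recorded in \eqref{eq.Q93}, to arrive at
$$
\tfrac12\|\partial_x u\|_2^2+\tfrac12\|\partial_{yy}u\|_2^2=\tfrac{3(p-2)}{8p}\|u\|_p^p,
$$
which is $Q(u)=0$ and hence, after multiplication by $2$, the asserted identity. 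Equivalently, and without reference to test functions, since $u$ is a critical point of the action the function $\lambda\mapsto E(u_\lambda)$ is stationary at $\lambda=1$, so $\frac12 Q(u)=\frac{d}{d\lambda}E(u_\lambda)\big|_{\lambda=1}=0$.

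The only genuine difficulty is that the generator $\phi$ carries the unbounded weights $x\,\partial_x u$ and $y\,\partial_y u$, which need not belong to $H^{1,2}(\R^2)$ for a mere $H^{1,2}$ solution; thus $\phi$ is not a priori an admissible test function, and the integrations by parts implicit in the derivative computations above produce boundary terms whose vanishing must be checked. I expect this to be the main obstacle. To handle it I would first invoke elliptic regularity to bootstrap $u$ to a smooth function (this regularity is also recorded in Theorem \ref{decay}), and then run the multiplier computation on a ball $B_R$ against a cut-off of the generator, integrating by parts and letting $R\to\infty$: since the integrands $|\partial_x u|^2$, $|\partial_{yy}u|^2$ and $|u|^p$ are integrable, the boundary terms can be made to vanish along a suitable sequence $R_n\to\infty$. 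The exponential decay furnished by Theorem \ref{decay} gives an even more direct justification once it is available.
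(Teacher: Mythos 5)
Your proof is correct, and it reaches the identity by a genuinely different decomposition than the paper's. The paper tests \eqref{equ} against three multipliers separately --- $x\partial_x u$, $y\partial_y u$ and $u$ --- obtaining three independent identities (in principle on truncated squares $Q_R$, then letting $R\to\infty$), and the stated identity is the linear combination of the three that eliminates $\omega\|u\|_2^2$. You instead test once, against the generator $\tfrac38 u+\tfrac12 x\partial_x u+\tfrac14 y\partial_y u$ of the mass-preserving scaling, which is exactly the combination $\tfrac38\cdot u+\tfrac12\cdot(x\partial_x u)+\tfrac14\cdot(y\partial_y u)$ of the paper's multipliers; the $\omega$-term then drops out automatically (mass invariance of the scaling) rather than being cancelled a posteriori through the Nehari identity, and the conclusion $Q(u)=0$ is read off from \eqref{eq.Q93}. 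What your route buys is economy and a conceptual explanation of why $\omega$ is absent from the conclusion; what the paper's route buys is strictly more information: its three identities also yield $\|\partial_x u\|_2^2=\tfrac{p-2}{2p}\|u\|_p^p$ and $\|\partial_{yy}u\|_2^2=\tfrac{p-2}{4p}\|u\|_p^p$ separately (the latter is precisely \eqref{iyy}, re-derived by a one-parameter scaling in Lemma \ref{vc0} for the blowup analysis), which cannot be recovered from the single combination $Q(u)=0$. On rigor, both arguments face the same obstacle --- the weights $x\partial_x u$, $y\partial_y u$ need not lie in $H^{1,2}(\R^2)$ --- and both resolve it by the same device: the paper truncates over $Q_R$ and passes to the limit (though it then only carries out the formal computation on $\R^2$), while your cut-off argument on $B_R$ with a sequence $R_n\to\infty$ chosen so that the boundary terms vanish is the same idea, spelled out more carefully. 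One caveat: do not lean on Theorem \ref{decay} for this justification, since that theorem assumes $\omega>0$, whereas the positivity of $\omega$ for nontrivial solutions (Corollary \ref{nonexistence}) is itself deduced from the present lemma; your primary justification via local regularity plus truncation avoids this circularity and is the right one.
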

\begin{proof}
%Since $u \in H^{1,2}(\R^2)$ is a solution to \eqref{equ}, by regularity theories, then $u\in H^{2,2}(\R^2)$, $x \partial_x u \in L^2(\R^2)$ and $y\partial_y u \in L^2(\R^2)$. 
Define $Q_R:=[-R, R] \times [-R, R]$ for $R>0$. First multiplying \eqref{equ} by $x \partial_x u, y \partial_y u$ and integrating over $Q_R$, we then derive the associated Pohozaev's identity on $Q_R$. Next taking the limit as $R \to \infty$, we then have the desired result. 
%Note that
%$$
%\int_{\R^2}f \, dxdy=\lim_{R \to + \infty} \int_{Q_R} f\, dxdy.
%$$ 

For simplicity, we shall directly integrate over $\R^2$. Multiplying \eqref{equ} by $x \partial_x u$ and integrating over $\R^2$, we then obtain that
\begin{align*} %\label{ph1}
-\int_{\R^2} \partial_{xx} u (x \partial_x u) \, dxdy+\int_{\R^2} \partial_{yyyy} u (x \partial_x u) \, dxdy + \omega \int_{\R^2} u (x \partial_x u)  \, dxdy=\int_{\R^2} |u|^{p-2}u (x \partial_x u) \, dxdy.
\end{align*}
Observe that
$$
-\int_{\R^2} \partial_{xx} u (x \partial_x u) \, dxdy=-\frac 12 \int_{\R^2} x \partial_x \left(|\partial_x u|^2\right)\, dxdy=\frac 12 \int_{\R^2}|\partial_x u|^2\, dxdy,
$$
\begin{align*}
\int_{\R^2} \partial_{yyyy} u (x \partial_x u) \, dxdy&=\int_{\R^2} \partial_{yy} u (x \partial_{xyy}u) \, dxdy =\frac 12 \int_{\R^2} x \partial_x \left(|\partial_{yy} u|^2\right) \, dxdy=-\frac 12 \int_{\R^2} |\partial_{yy} u|^2 \,dxdy,
\end{align*}
$$
\omega \int_{\R^2} u (x \partial_x u)  \, dxdy=\frac{\omega}{2} \int_{\R^2} x \partial_x \left(|u|^2\right) \, dxdy=-\frac{\omega}{2} \int_{\R^2} |u|^2 \, dxdy
$$
and
$$
\int_{\R^2} |u|^{p-2}u (x \partial_x u) \, dxdy=\frac 1 p \int_{\R^2} x \partial_x \left(|u|^p\right) \,dxdy=-\frac 1p \int_{\R^2} |u|^p \, dxdy.
$$
This leads to
\begin{align} \label{ph2}
\frac 12 \int_{\R^2}|\partial_x u|^2\, dxdy-\frac 12 \int_{\R^2} |\partial_{yy} u|^2 \,dxdy-\frac{\omega}{2} \int_{\R^2} |u|^2 \, dxdy=-\frac 1p \int_{\R^2} |u|^p \, dxdy.
\end{align}
On the other hand, multiplying \eqref{equ} by $y \partial_y u$ and integrating over $\R^2$, we then obtain that
\begin{align*} %\label{ph1}
-\int_{\R^2} \partial_{xx} u (y \partial_y u) \, dxdy+\int_{\R^2} \partial_{yyyy} u (y \partial_y u) \, dxdy + \omega \int_{\R^2} u (y \partial_y u)  \, dxdy=\int_{\R^2} |u|^{p-2}u (y \partial_y u) \, dxdy.
\end{align*}
Notice that
$$
-\int_{\R^2} \partial_{xx} u (y \partial_y u) \, dxdy=\int_{\R^2} \partial_x u (y \partial_{xy} u) \, dxdy=\frac 12 \int_{\R^2} y \partial_y \left(|\partial_x u|\right) \, dxdy=-\frac 12 \int_{\R^2} |\partial_x u|^2 \,dxdy
$$
and
$$
\int_{\R^2} \partial_{yyyy} u (y \partial_y u) \, dxdy=\int_{\R^2} \partial_{yy} u (y \partial_{yyy} u +2 \partial_{yy} u) \, dxdy=\frac 32 \int_{\R^2} |\partial_{yy} u|^2 \, dx.
$$
Therefore, we get that
\begin{align} \label{ph3}
-\frac 12 \int_{\R^2}|\partial_x u|^2\, dxdy+\frac 32 \int_{\R^2} |\partial_{yy} u|^2 \,dxdy-\frac{\omega}{2} \int_{\R^2} |u|^2 \, dxdy=-\frac 1p \int_{\R^2} |u|^p \, dxdy.
\end{align}
Next multiplying \eqref{equ} by $u$ and integrating over $\R^2$, we then find that
\begin{align} \label{ph4}
 \int_{\R^2}|\partial_x u|^2\, dxdy+\int_{\R^2} |\partial_{yy} u|^2 \,dxdy+\omega\int_{\R^2} |u|^2 \, dxdy=\int_{\R^2} |u|^p \, dxdy.
\end{align}
Combining \eqref{ph2}, \eqref{ph3} and \eqref{ph4}, we then derive that
$$
 \int_{\R^2}|\partial_x u|^2\, dxdy+\int_{\R^2} |\partial_{yy} u|^2 \,dxdy=\frac{3(p-2)}{4p} \int_{\R^2}|u|^p \,dx.
$$
This completes the proof.
\end{proof} 
%{\color{red}
%\begin{rem}
%The  Pohozhaev  identity established in the above Lemma  is equivalent to $Q(u)=0,$ where $Q(u)$ is the Pohozhaev quantity introduced in \eqref{rem68}.
%\end{rem}
%}

\begin{cor} \label{nonexistence}
Let $p>2$, $u \in H^{1,2}(\R^2)$ and $u \neq 0$ be a solution to \eqref{equ}. Then $\omega > 0$.
\end{cor}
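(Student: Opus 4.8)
The plan is to combine the two identities already available for any $H^{1,2}(\R^2)$-solution of \eqref{equ}, namely Pohozaev's identity from Lemma \ref{ph} and the identity obtained by pairing the equation with $u$ itself. Testing \eqref{equ} against $u$ and integrating by parts gives exactly \eqref{ph4},
$$
\int_{\R^2}|\partial_x u|^2\,dxdy+\int_{\R^2}|\partial_{yy}u|^2\,dxdy+\omega\int_{\R^2}|u|^2\,dxdy=\int_{\R^2}|u|^p\,dxdy,
$$
while Lemma \ref{ph} asserts
$$
\int_{\R^2}|\partial_x u|^2\,dxdy+\int_{\R^2}|\partial_{yy}u|^2\,dxdy=\frac{3(p-2)}{4p}\int_{\R^2}|u|^p\,dxdy .
$$
Both relations have already been established in the excerpt, so no new integration by parts is needed.

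Subtracting the second identity from the first eliminates the kinetic terms and isolates $\omega$:
$$
\omega\int_{\R^2}|u|^2\,dxdy=\left(1-\frac{3(p-2)}{4p}\right)\int_{\R^2}|u|^p\,dxdy=\frac{p+6}{4p}\int_{\R^2}|u|^p\,dxdy,
$$
where the coefficient simplifies because $4p-3(p-2)=p+6$. It then suffices to observe that the right-hand side is strictly positive. Indeed, for $p>2$ one has $\tfrac{p+6}{4p}>0$, and since $u\neq0$ in $H^{1,2}(\R^2)$ we have $\int_{\R^2}|u|^2\,dxdy>0$; moreover $\int_{\R^2}|u|^p\,dxdy=0$ would force $u\equiv0$ a.e., contradicting $u\neq0$, so $\int_{\R^2}|u|^p\,dxdy>0$ as well. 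Dividing through by the positive quantity $\int_{\R^2}|u|^2\,dxdy$ yields
$$
\omega=\frac{p+6}{4p}\,\frac{\int_{\R^2}|u|^p\,dxdy}{\int_{\R^2}|u|^2\,dxdy}>0,
$$
which is the claim.

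There is no genuine obstacle here: the entire argument is an algebraic consequence of Lemma \ref{ph} together with \eqref{ph4}, and the only point requiring a (trivial) justification is the strict positivity of $\|u\|_p^p$, which follows from $u\neq0$. The statement is therefore essentially an immediate corollary, explaining its placement right after the Pohozaev lemma, and it complements the nonexistence remark for $\omega\le0$ by showing that a positive frequency is not merely sufficient but in fact necessary for the existence of a nontrivial solution to \eqref{equ}.
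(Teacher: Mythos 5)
Your proposal is correct and follows exactly the paper's own argument: subtract Pohozaev's identity (Lemma \ref{ph}) from the identity \eqref{ph4} obtained by testing \eqref{equ} with $u$, arriving at $\omega\int_{\R^2}|u|^2\,dxdy=\frac{p+6}{4p}\int_{\R^2}|u|^p\,dxdy$, whence $\omega>0$ since $u\neq 0$. The only difference is that you spell out the (trivial) positivity of $\|u\|_p^p$, which the paper leaves implicit.
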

\begin{proof}
Since $u \in H^{1,2}(\R^2)$ is a solution to \eqref{equ}, then
$$
\int_{\R^2}|\partial_x u|^2\, dxdy+\int_{\R^2} |\partial_{yy} u|^2 \,dxdy+\omega\int_{\R^2} |u|^2 \, dxdy=\int_{\R^2} |u|^p \, dxdy.
$$
On the other hand, by Lemma \ref{ph}, then
$$
\int_{\R^2} |\partial_x u|^2 \, dxdy + \int_{\R^2} |\partial_{yy} u|^2 \, dxdy =\frac{3(p-2)}{4p}\int_{\R^2} |u|^p \, dxdy.
$$
As a consequence, we see that
$$
\omega\int_{\R^2} |u|^2 \, dxdy=\frac{p+6}{4p }\int_{\R^2} |u|^p \, dxdy.
$$
Then $\omega>0$ and the proof is completed.
\end{proof}

\section{Local well-posedness of solutions} \label{localw}

In this section, we shall discuss the local well-posedness of solutions to \eqref{equt} and present the proof of Theorem \ref{lw}.
%and we shall start with showing the local well-posedness of solutions in $L^2(\R^2)$ for $2<p<\frac{14}{3}$. 
The Strichartz estimates for the Cauchy problem \eqref{equt} are given by
\begin{equation*}%\label{eq.NLS1210}
 \left\|e^{\textnormal{i}t (-\partial_{xx}+\partial_{yyyy})} f\right\|_{L^q_{(0,T)} L^r(\mathbb{R}^{2})} \lesssim \|f\|_{L^2(\mathbb{R}^{2})},
\end{equation*}
\begin{equation}\label{eq.NLS1211}
\left\|\int_0^t e^{\textnormal{i}(t-\tau) (-\partial_{xx}+\partial_{yyyy})} F(\tau) \, d \tau \right\|_{L^r_{(0,T)} L^q(\mathbb{R}^{2})} \lesssim \|F \|_{L^{\widetilde{r}^\prime}_{(0,T)} L^{\widetilde{q}^\prime}(\mathbb{R}^{2})} ,
\end{equation}
where $(r, q)$ and $(\widetilde{r},\widetilde{q})$ are admissible pairs. Here we say that $(r, q)$ is an admissible pair if there holds that
\begin{equation}\label{eq.NLs1212}
\frac{1}{r} + \frac{3}{4q} = \frac{3}{8}, \quad  2 \leq q \leq \infty, \quad \frac{8}{3} \leq r \leq \infty.
\end{equation}

First we are going to construct weak solutions to \eqref{equt} by applying the abstract theory of Okazawa, Suzuki, and Yokota \cite[Theorem 2.2]{OSY12}. 

\begin{lem}\label{lem:constructsol}
Let $p>2$. Then, for any $R>0$, there exists $T=T_R>0$ such that \eqref{equt} has a unique weak solution 
$$
\psi\in L^{\infty}([0,T); H^{1,2}(\mathbb{R}^2))\cap W^{1,\infty}([0,T); H^{-1,-2}(\mathbb{R}^2))
$$ 
provided initial datum $\|\psi_0\|_{H^{1,2}(\mathbb{R}^2)} \leq R$.  Moreover, the following property holds true,
$$
M(\psi(t))=M(\psi_0), \quad E(\psi(t)) \leq E(\psi_0) \quad \forall \,\, t \in [0, T).
$$
%\begin{align*}
%  &\|\psi(t)\|_{L^2}
%  =\|\psi_0\|_{L^2}, \ \ \ E(\psi(t))\le E(\psi_0)
%\end{align*}
%for all $t\in[0,T_M]$.
\end{lem}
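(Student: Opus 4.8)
The plan is to recast \eqref{equt} into the abstract framework of \cite[Theorem 2.2]{OSY12} and to verify its hypotheses. Introduce the linear operator $L:=-\partial_{xx}+\partial_{yyyy}$, regarded as a nonnegative self-adjoint operator on $L^2(\R^2)$ whose quadratic form $u\mapsto\|\partial_x u\|_2^2+\|\partial_{yy}u\|_2^2$ has form domain $H^{1,2}(\R^2)$, so that $L+1$ induces an equivalent norm on $H^{1,2}(\R^2)$. With the Gelfand triple $H^{1,2}(\R^2)\hookrightarrow L^2(\R^2)\hookrightarrow H^{-1,-2}(\R^2)$, equation \eqref{equt} reads $\textnormal{i}\,\partial_t\psi=L\psi-g(\psi)$, where $g(\psi):=|\psi|^{p-2}\psi$ is the Fr\'echet derivative of the $C^1$ functional $G(u):=\frac1p\|u\|_p^p$. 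This gauge-invariant gradient structure, together with the self-adjointness and nonnegativity of $L$, is exactly the input required by the abstract theory.

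First I would check that $g$ maps $H^{1,2}(\R^2)$ into $H^{-1,-2}(\R^2)$ and is bounded on bounded sets; here the continuous embedding $H^{1,2}(\R^2)\hookrightarrow L^p(\R^2)$ and the anisotropic Gagliardo--Nirenberg inequality of Lemma \ref{inequality} enter. The pointwise bound $|g(u)-g(v)|\le C\bigl(|u|^{p-2}+|v|^{p-2}\bigr)|u-v|$, valid for every $p>2$, then yields the growth and local Lipschitz conditions in the dual space required by \cite[Theorem 2.2]{OSY12}. The abstract scheme regularizes the linear part (a Yosida-type approximation $L_\la:=L(I+\la L)^{-1}$) and solves the resulting equation for $\psi_\la$, which conserves the mass and satisfies an energy identity.

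The crucial point is a bound on $\{\psi_\la\}$ in $H^{1,2}(\R^2)$, uniform in $\la$, on a time interval $[0,T_R]$ whose length depends only on $R$. Since $E$ is indefinite once the potential term dominates, I would close this by a continuity argument: the mass conservation $\|\psi_\la(t)\|_2=\|\psi_0\|_2$ and the energy identity, fed into Lemma \ref{inequality}, produce a differential inequality for $\|\psi_\la(t)\|_{H^{1,2}}$ that traps it below $2R$ on $[0,T_R]$. The equation then bounds $\partial_t\psi_\la$ uniformly in $L^\infty([0,T_R];H^{-1,-2})$, so the Aubin--Lions--Simon lemma provides a subsequence converging strongly in $L^p_{\mathrm{loc}}$ and weakly-$\ast$ in $L^\infty([0,T_R];H^{1,2})$. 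Passing to the limit, the linear terms converge weakly and the strong convergence handles $g(\psi_\la)$, giving a weak solution $\psi\in L^\infty([0,T_R];H^{1,2})\cap W^{1,\infty}([0,T_R];H^{-1,-2})$. Pairing the limit equation with $\psi$ and using that $\langle L\psi,\psi\rangle$ and $\langle g(\psi),\psi\rangle=\|\psi\|_p^p$ are both real gives $\frac{d}{dt}\|\psi\|_2^2=0$, i.e. $M(\psi(t))=M(\psi_0)$; the weak lower semicontinuity of the kinetic energy against the convergence of the potential term yields $E(\psi(t))\le E(\psi_0)$.

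For uniqueness within this class I would exploit the anisotropic Sobolev embedding $H^{1,2}(\R^2)\hookrightarrow L^\infty(\R^2)$, which holds because $\int_{\R^2}(1+\xi^2+\eta^4)^{-1}\,d\xi\,d\eta<\infty$ on the Fourier side. If $\psi$ and $\widetilde\psi$ are two such solutions, then testing the difference of the equations against $\psi-\widetilde\psi$ and using that the self-adjoint operator $L$ contributes no real part leads to $\frac{d}{dt}\|\psi-\widetilde\psi\|_2^2\le C(R)\|\psi-\widetilde\psi\|_2^2$, where the pointwise bound above and $\|\psi\|_\infty,\|\widetilde\psi\|_\infty\lesssim R$ have been invoked; Gronwall's inequality then forces $\psi=\widetilde\psi$. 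The main obstacle is the uniform-in-$\la$ $H^{1,2}$ estimate: because the energy is not coercive for large $p$, one must read off the correct subcritical scaling from the sharp exponents in Lemma \ref{inequality} to close the bootstrap on a time scale depending only on $R$. Verifying the demiclosedness and measurability hypotheses of \cite[Theorem 2.2]{OSY12} for the anisotropic operator $L$ is a further, though structural, technical point.
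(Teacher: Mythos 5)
Your overall strategy coincides with the paper's: rewrite \eqref{equt} as $\textnormal{i}\,\partial_t\psi=S\psi+g(\psi)$ with $S=-\partial_{xx}+\partial_{yyyy}$ on the triple $H^{1,2}(\R^2)\hookrightarrow L^2(\R^2)\hookrightarrow H^{-1,-2}(\R^2)$, and invoke \cite[Theorem 2.2]{OSY12}. But the proposal has a genuine gap at the one point where the real work lies. The paper's proof consists of verifying the five hypotheses \textbf{(G1)}--\textbf{(G5)} of that theorem; you verify the gradient structure \textbf{(G1)}, the dual-space Lipschitz bound \textbf{(G2)} and gauge invariance \textbf{(G4)}, you explicitly defer the demiclosedness condition \textbf{(G5)} (which the paper does prove, via the compact embedding $H^{1,2}_{loc}(\R^2)\hookrightarrow L^{p}_{loc}(\R^2)$ and the argument of \cite{S15}), and you never mention \textbf{(G3)}: $|G(u)-G(v)|\le\delta+C_\delta(R)\|u-v\|_{L^2}$ for $u,v$ in $H^{1,2}$-balls of radius $R$. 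Condition \textbf{(G3)} is not a formality; it is exactly what produces a uniform-in-approximation $H^{1,2}$ bound on a time interval $T_R$ depending only on $R$, for \emph{every} $p>2$: on the bootstrap interval $\partial_t\psi_\lambda$ is bounded in $H^{-1,-2}$, so interpolation gives $\|\psi_\lambda(t)-\psi_0\|_{L^2}\lesssim t^{1/2}$, hence by \textbf{(G3)} the potential energy barely moves for small $t$, and energy conservation then controls the kinetic part.

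The substitute mechanism you propose for this step fails in the supercritical range. Feeding mass and energy conservation into Lemma \ref{inequality} gives, with $K(t):=\|\partial_x\psi_\lambda(t)\|_2^2+\|\partial_{yy}\psi_\lambda(t)\|_2^2$ and $M:=\|\psi_0\|_2^2$, only the pointwise-in-time inequality $\tfrac12 K(t)\le E(\psi_0)+C\,K(t)^{3(p-2)/8}M^{(p+6)/8}$. When $2<p<\tfrac{14}{3}$ the exponent is below $1$ and this is coercive; but when $p\ge\tfrac{14}{3}$ and the datum is large, so that $E(\psi_0)$ exceeds the maximum of $k\mapsto\tfrac12 k-Ck^{3(p-2)/8}M^{(p+6)/8}$, the inequality constrains nothing at any time: there is no ``trapping below $2R$'', and no time-smallness enters the inequality that a continuity argument could exploit. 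Since the lemma is asserted for all $p>2$, this is where your existence argument breaks; it is precisely the gap that \textbf{(G3)} (or, alternatively, Strichartz machinery) is designed to fill. On the other hand, your uniqueness argument --- the embedding $H^{1,2}(\R^2)\hookrightarrow L^\infty(\R^2)$ (valid since $\int_{\R^2}(1+\xi^2+\eta^4)^{-1}\,d\xi\,d\eta<\infty$) combined with a Gronwall estimate for $\|\psi-\widetilde{\psi}\|_2^2$ --- is correct and is in fact more elementary than the paper's, which proves uniqueness by Strichartz estimates in Lemma \ref{lem:uniquenesssol}; it does not, however, repair the existence step.
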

\begin{proof}
%We will apply \cite[Theorem~2.2]{OSY12} with
Define
$$
S:= - \partial_{xx}+\partial_{yyyy}, \quad g(\psi):=-|\psi|^{p-2}\psi,
$$
$$
X:=L^2(\mathbb{R}^2),\quad X_S:=H^{1,2}(\mathbb{R}^2),\quad X_S^*:=H^{-1,-2}(\mathbb{R}^2).
$$
It is simple to check that $S$ is a nonnegative self-adjoint operator in $L^2(\mathbb{R}^2)$.  Note that $H^{1,2}(\mathbb{R}^2)=D((1+S)^{1/2})$. We now rewrite \eqref{equt} as
$$
\textnormal{i}\partial_t \psi =S\psi +g(\psi). 
$$
At this point, to derive the desired conclusions, it remains to verify that the conditions given as follows hold true.
\begin{itemize}
\item [$\textbf{(G1)}$] There exists $G\in C^1(X_S,\mathbb{R})$ such that $G'=g$.
\item [$\textbf{(G2)}$] For any $R>0$, there exists $C(R)>0$ such that
$$
\|g(u)-g(v)\|_{X_S^*}  \le C(R)\|u-v\|_{X_S}\quad  \forall \,\, u,v \,\, \in X_S, \|u\|_{X_S}, \|v\|_{X_S}\le R. 
$$
\item [$\textbf{(G3)}$] For any $R, \delta>0$, there exists $C_\delta(R)>0$ such that
$$
|G(u)-G(v)| \le \delta +C_{\delta}(R)\|u-v\|_X\quad \forall \,\, u,v\in X_S, \|u\|_{X_S}, \|v\|_{X_S}\le R.
$$
\item [$\textbf{(G4)}$] $\langle g(u),\textnormal{i} u\rangle_{X_S^*,X_S} =0$ for any $u\in X_S$.
\item [$\textbf{(G5)}$] Given a bounded open interval $I\subset\mathbb{R}$ and $\{w_n\}$ be any bounded sequence in $L^\infty(I, X_S)$ such that
$$
\left\{\begin{aligned}
w_n(t)&\to w(t)\ (n\to\infty)
&&\text{weakly in $X_S$ a.a. $t\in I$},\\
g(w_n)&\to f\ (n\to\infty)
&&\text{weakly$^*$ in $L^\infty(I,X_S^*)$}.
\end{aligned}
\right.
$$
Then
$$
\int_I\langle f(t),\textnormal{i}  w(t)\rangle_{X_S^*,X_S}\,dt =\lim_{n\to\infty}\int_I\langle g(w_n(t)),\textnormal{i} w_n(t)\rangle_{X_S^*,X_S}\,dt. 
$$
\end{itemize}
%Now we check \textbf{(G1)}--\textbf{(G5)}.
The condition \textbf{(G1)} can be easily verified as
$$
G(u)=-\frac{1}{p}\|u\|_{L^{p}}^{p},\quad u \in H^{1,2}(\mathbb{R}^2).
$$
%by standard inequalities and the embedding $H^{1,2}(\mathbb{R}^2)\hookrightarrow L^q(\mathbb{R}^2)$ $(q\ge 2)$. 
Similarly, the condition \textbf{(G2)} can be also verified. The condition \textbf{(G3)} follows from the following estimate,
\begin{align*}
 |G(u)-G(v)|
&\lesssim (\|u\|_{L^2}+\|v\|_{L^2})\bigl|\|u\|_{L^2}-\|v\|_{L^2}\bigr| +\int_{\R^2}(|u|^{p-1}+|v|^{p-1})|u-v|\,dx
\\&\lesssim(\|u\|_{L^2}+\|v\|_{L^2}+\|u\|_{L^{2(p-1)}}^{p-1}+\|v\|_{L^{2(p-1)}}^{p-1})\|u-v\|_{L^2}
\\&\lesssim (M+M^{p-1})\|u-v\|_{L^2}
\end{align*}
for $u,v\in H^{1,2}(\mathbb{R}^2)$, $\|u\|_{H^{1,2}} \leq R$ and $\|v\|_{H^{1,2}}\le R$. The condition \textbf{(G4)} is clearly from the definition of $g$. Finally, we shall check the condition \textbf{(G5)}. In view of \cite[Lemma~5.3]{OSY12}, it is enough to show that if $\{u_n\} \subset H^{1,2}(\R^2)$ satisfies that
$$
\left\{\begin{aligned}
u_n&\to u\ (n\to\infty)
&&\text{weakly in $H^{1,2}(\mathbb{R}^2)$},
\\g(u_n)&\to f\ (n\to\infty)
  &&\text{weakly in $H^{-1,-2}(\mathbb{R}^2)$},
\end{aligned}\right. 
$$
then $f=g(u)$. For this, we shall follow the arguments as in the proof of \cite[Theorem 1.1]{S15}. Let $\varphi\in C_{0}^\infty(\mathbb{R}^2)$. From the weak convergence of $\{u_n\}$ in $H^{1,2}(\mathbb{R}^2)$ and the compactness $H_{loc}^{1,2}(\mathbb{R}^2)\hookrightarrow L_{{loc}}^{p}(\mathbb{R}^2)$, then $u_n\to u$ in $L_{{loc}}^{p}(\mathbb{R}^2)$ as $n \to \infty$. Therefore, we have that
\begin{align*}
 |\langle g(u_n)-g(u),\varphi\rangle|
\lesssim |\langle u_n-u, \varphi\rangle|+\|\varphi\|_{L^{p}}(\|u_n\|_{L^{p}}^{p-2}+\|u\|_{L^{p}}^{p-2})\|u_n-u\|_{L^{p}}=o_n(1).
 \end{align*}
It then means that $g(u_n)\to g(u)$ in $\mathcal{D}'(\mathbb{R}^2)$ as $n \to \infty$. On the other hand, we know that $g(u_n)\to f$ in $H^{-1,-2}(\mathbb{R}^2)$ as $n \to \infty$. As a consequence, we obtain that $f=g(u)$. Thus the condition \textbf{(G5)} is verified. Hence \cite[Theorem~2.2]{OSY12} readily implies the conclusion and the proof is completed.
%We have just finished the verification of \textbf{(G1)}--\textbf{(G5)}. Therefore, \cite[Theorem~2.2]{OSY12} implies the conclusion.
\end{proof}

%Next, we establish the uniqueness. We apply the same argument in the proof of \cite[Lemma 3.1]{S15}. 

\begin{lem}\label{lem:uniquenesssol}
Let $p>2$ and $\psi_0 \in H^{1,2}(\mathbb{R}^2)$. If $\psi_1, \psi_2\in L^\infty([0, T); H^{1,2}(\mathbb{R}^2))$ are two weak solutions \eqref{equt} with $\psi_1(0)=\psi_2(0)=u_0$, then $\psi_1=\psi_2$.
\end{lem}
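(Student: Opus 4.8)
The plan is to reduce the statement to a closed self-improving estimate for the difference $w:=\psi_1-\psi_2$, which satisfies the equation with vanishing initial datum. First I would upgrade the two weak solutions to mild solutions. Since $\psi_j\in L^\infty([0,T);H^{1,2}(\R^2))$ and the anisotropic embedding $H^{1,2}(\R^2)\hookrightarrow L^\infty(\R^2)$ holds, the nonlinearity $|\psi_j|^{p-2}\psi_j$ belongs to $L^\infty([0,T);L^2(\R^2))$; together with $\partial_t\psi_j\in L^\infty([0,T);H^{-1,-2}(\R^2))$ and the Gelfand triple $H^{1,2}\hookrightarrow L^2\hookrightarrow H^{-1,-2}$, a Lions--Magenes-type lemma gives $\psi_j\in C([0,T);L^2(\R^2))$, so that the variation-of-constants formula
$$
\psi_j(t)=e^{-\textnormal{i}tS}\psi_0-\textnormal{i}\int_0^t e^{-\textnormal{i}(t-\tau)S}g(\psi_j(\tau))\,d\tau,\qquad S:=-\partial_{xx}+\partial_{yyyy},\ g(\psi):=-|\psi|^{p-2}\psi,
$$
is legitimate. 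Subtracting the two representations yields $w(t)=-\textnormal{i}\int_0^t e^{-\textnormal{i}(t-\tau)S}\big(g(\psi_1)-g(\psi_2)\big)(\tau)\,d\tau$.

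Next I would run the Strichartz scheme on this Duhamel formula. Applying the inhomogeneous estimate \eqref{eq.NLS1211} with an admissible pair $(q,r)$ for $w$ and a dual admissible pair $(\widetilde q,\widetilde r)$ for the source, and using the elementary bound
$$
\big||\psi_1|^{p-2}\psi_1-|\psi_2|^{p-2}\psi_2\big|\lesssim\big(|\psi_1|^{p-2}+|\psi_2|^{p-2}\big)\,|w|,
$$
a H\"older inequality in space (with an exponent $a$ for which $H^{1,2}\hookrightarrow L^a$, matched through $\tfrac1{\widetilde r'}=\tfrac{p-2}{a}+\tfrac1r$) absorbs the factors $|\psi_j|^{p-2}$ against the uniform bound $\sup_t\|\psi_j\|_{H^{1,2}}\le R$, while a H\"older inequality in time produces a genuinely positive power of $T$. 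The outcome is an estimate of the form $\|w\|_{L^q_{(0,T)}L^r}\le C(R)\,T^{\alpha}\|w\|_{L^q_{(0,T)}L^r}$ with $\alpha>0$. Since $w\in L^\infty([0,T);H^{1,2})\hookrightarrow L^q_{(0,T)}L^r$ has finite Strichartz norm on the finite interval, choosing $T$ so small that $C(R)T^{\alpha}\le\tfrac12$ forces $w\equiv0$ there; as the threshold depends only on $R$, partitioning $[0,T)$ into finitely many such subintervals and iterating gives $\psi_1=\psi_2$ on all of $[0,T)$.

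The step I expect to be the main obstacle is the matching of exponents: one must exhibit admissible pairs $(q,r)$ and $(\widetilde q,\widetilde r)$ together with an embedding exponent $a$ satisfying the space relation above and, crucially, the strict inequality $\widetilde q'<q$ needed to gain $T^\alpha$; the admissibility constraint \eqref{eq.NLs1212} makes this balance delicate, and the naive choice $a=\infty$ (absorbing the nonlinearity in $L^\infty$) in fact forces the dual pair outside the admissible range, so a genuinely interior exponent must be chosen. This is precisely the low-power regime $2<p<3$ in which the classical contraction-mapping scheme for existence breaks down, which is why uniqueness must be treated separately. I note that, thanks to $H^{1,2}(\R^2)\hookrightarrow L^\infty(\R^2)$, the whole scheme can be bypassed by a direct energy identity: the Lions--Magenes lemma gives $\tfrac{d}{dt}\|w\|_2^2=2\,\mathrm{Im}\langle g(\psi_1)-g(\psi_2),w\rangle$, and since $\|g(\psi_1)-g(\psi_2)\|_2\le C(R)\|w\|_2$ one concludes $w\equiv0$ by Gr\"onwall's inequality from $w(0)=0$, valid for every $p>2$.
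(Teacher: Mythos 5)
Your main argument is essentially the paper's own proof. The paper likewise writes both solutions in Duhamel form, applies the inhomogeneous Strichartz estimate \eqref{eq.NLS1211} with the admissible pair whose spatial exponent is $p$ (hence time exponent $r=\tfrac{8p}{3(p-2)}\geq\tfrac83>2$) and its dual pair on the source, uses the pointwise bound $\left||\psi_1|^{p-2}\psi_1-|\psi_2|^{p-2}\psi_2\right|\lesssim(|\psi_1|^{p-2}+|\psi_2|^{p-2})|w|$ for the difference $w=\psi_1-\psi_2$, absorbs $\|\psi_j\|_p^{p-2}$ through $H^{1,2}(\R^2)\hookrightarrow L^p(\R^2)$ by H\"older in space, and gains the factor $T^{1-2/r}$ by H\"older in time before closing by absorption; your general exponent $a$ is a flexible version of the paper's choice $a=p$, and your preliminary Lions--Magenes step legitimizing the Duhamel representation for weak solutions is a point the paper passes over silently. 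Your closing remark, however, is a genuinely different and more elementary route that the paper does not take: since $(1+\xi_1^2+\xi_2^4)^{-1}$ is integrable on $\R^2$, the embedding $H^{1,2}(\R^2)\hookrightarrow L^\infty(\R^2)$ does hold (the paper itself invokes it elsewhere), so $\|g(\psi_1)-g(\psi_2)\|_2\le C(R)\|w\|_2$ and Gr\"onwall finishes the proof with no Strichartz input at all, uniformly for every $p>2$; this is arguably cleaner than the published argument. One inaccuracy in your commentary: the ``naive choice'' $a=\infty$ does not push the dual pair outside the admissible range --- it forces both pairs to the endpoint $(\infty,2)$, which \emph{is} admissible by \eqref{eq.NLs1212}, and the resulting bound
\begin{equation}
\|w\|_{L^\infty_{(0,T)}L^2}\lesssim\left\|\left(|\psi_1|^{p-2}+|\psi_2|^{p-2}\right)w\right\|_{L^1_{(0,T)}L^2}\le C(R)\,T\,\|w\|_{L^\infty_{(0,T)}L^2}
\end{equation}
is just Minkowski's inequality plus unitarity of the propagator on $L^2$, i.e.\ the integrated form of your Gr\"onwall argument. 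So the exponent balance is less delicate than you feared; none of this affects the correctness of your proof.
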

\begin{proof} 
Observe that
\begin{equation}
\psi_j(t):= e^{\textnormal{i} \left(-\partial_{xx} +\partial_{yyyy}\right) t}\psi_0  -\textnormal{i} \int_0^t e^{\textnormal{i}\left(-\partial_{xx} +\partial_{yyyy}\right)(t-\tau)} |\psi_j(\tau)|^{p-2} \psi_j(\tau) \,d \tau, \quad j=1,2.
\end{equation}
Using the Strichartz estimate \eqref{eq.NLS1211} by properly choosing $r>2$ implies that
$$ 
\left\|\psi_1-\psi_2\right\|_{L^r_{(0,T)} L^{p}(\mathbb{R}^{2})} \lesssim  \||\psi_1|^{p-2}\psi_1-|\psi_2|^{p-2}\psi_2\|_{L^{r'}_{(0, T)} L^{p'}(\mathbb{R}^{2})}.
$$
Note that
$$
\||\psi_1|^{p-2}\psi_1-|\psi_2|^{p-2}\psi_2\|_{L^{p'}(\mathbb{R}^{2})} \lesssim \left(\|\psi_1\|_{L^p(\mathbb{R}^{2})}^{p-2}+\|\psi_2\|_{L^p(\mathbb{R}^{2})}^{p-2}\right)\|\psi_1-\psi_2\|_{L^p(\mathbb{R}^{2})} \lesssim \|\psi_1-\psi_2\|_{L^p(\mathbb{R}^{2})}.
$$
It then follows that
$$
\left\|\psi_1-\psi_2\right\|_{L^r_{(0,T)} L^{p}(\mathbb{R}^{2})} \leq CT^{1-\frac 2 r} \left\|\psi_1-\psi_2\right\|_{L^r_{(0,T)} L^{p}(\mathbb{R}^{2})}.
$$
This then infers the desired conclusion and the proof is completed.
%To prove the uniqueness of the solutions, we shall apply the same arguments as in the proof of \cite[Lemma 3.1]{S15}.  Taking into account the Strichartz estimates \eqref{eq.NLS1210} and \eqref{eq.NLS1211} as in the proof of Lemma \ref{lwpl2}, we can get that
%$$
%\|\psi_1-\psi_2\|_{L^q_{(0,T)} L^r(\mathbb{R}^{2})}  \lesssim T^{\alpha} \|\psi_1-\psi_2\|_{L^q_{(0,T)} L^r(\mathbb{R}^{2})}.
%$$
\end{proof}

\begin{proof}[Proof of Theorem \ref{lw}]
The existence and uniqueness of solutions and the conservation laws follow directly from Lemmas \ref{lem:constructsol} and \ref{lem:uniquenesssol} together with \cite[Theorem~2.3]{OSY12}. The remaining points can be proved by standard ways.
%When $p \in (2,3]$, the conclusion follows from the application of Lemmas \ref{lem:constructsol} and \ref{lem:uniquenesssol} together with \cite[Theorem~2.3]{OSY12}. While $p>3$, the conclusion follows from Lemma \ref{lwpl38}. 
This completes the proof.
\end{proof}

\section{Existence and quantitative properties of ground states} \label{properties}

In the section, we shall discuss the existence, axial symmetry, decay and orbital stability/instability of solutions to \eqref{equ} and present the proofs of Theorems \ref{groundstate}-\ref{instability}.

\begin{proof} [Proof of Theorem \ref{groundstate}]
If $u \in N$, then $I_{\omega}(u)=0$. It follows that
\begin{align} \label{bdd}
J_{\omega}(u)=J_{\omega}(u)-\frac 1p I_{\omega}(u)=\frac{p-2}{2p} \left(\int_{\R^2} |\partial_x u|^2 \,dxdy +\int_{\R^2} |\partial_{yy} u|^2 \,dxdy + \omega \int_{\R^2} |u|^2 \,dxdy \right).
\end{align}
Moreover, by \eqref{gn}, then 
\begin{align*}
&\int_{\R^2} |\partial_x u|^2 \,dxdy +\int_{\R^2} |\partial_{yy} u|^2 \,dxdy + \omega \int_{\R^2} |u|^2 \,dxdy \leq C_{opt} \|\partial_x u\|_2^{\frac{p-2}{2}} \|\partial_{yy} u\|_2^{\frac{p-2}{4}} \|u\|_2^{\frac{p+6}{4}} \\
& \leq C_{opt} \left(\int_{\R^2} |\partial_x u|^2 \,dxdy +\int_{\R^2} |\partial_{yy} u|^2 \,dxdy + \omega \int_{\R^2} |u|^2 \,dxdy\right)^{\frac p 2}.
\end{align*}
Due to $p>2$, we then have that
$$
\int_{\R^2} |\partial_x u|^2 \,dxdy +\int_{\R^2} |\partial_{yy} u|^2 \,dxdy + \omega \int_{\R^2} |u|^2 \,dxdy \geq C_{opt}^{\frac{2}{2-p}}.
$$
It then yields from \eqref{bdd} that $m_{\omega}>0$. Observe that $N$ is a natural constraint. As a consequence, there exists a Palais-Smale sequence $\{u_n\} \subset N$ for $J_{\omega}$ at the level $m_{\omega}$, i.e. $J'_{\omega}(u_n)=o_n(1)$ and $J_{\omega}(u_n)=m+o_n(1)$. Clearly, by using \eqref{bdd}, then $\{u_n\}$ is bounded in $H^{1,2}(\R^2)$. From Lemma \ref{ccl}, then there exist a sequence $\{z_n\}\subset \R^2$ and a nontrivial $u \in H^{1,2}(\R^2)$ such that $u_n(\cdot+z_n) \wto u$ in $H^{1,2}(\R^2)$ as $n \to \infty$. In addition, we see that $u$ is a solution to \eqref{equ}. It then follows that $I_{\omega}(u)=0$ and $J_{\omega}(u) \geq m_{\omega}$. 
%Define $w_n:=u_n(\cdot +z_n)$, then
%$$
%E(w_n-u) +E(u)=E(w_n) +o_n(1), \quad I(w_n-u) +I(u)=I(w_n) +o_n(1).
%$$
%If $I(u)>0$, then $I(w_n-u)<0$ for any $n \in \N^+$ large enough. It is simple to see that there exists $0<t_n<1$ such that %$I(t_n(w_n-u))=0$ for any $n \in \N^+$ large enough. Observe that
%\begin{align*}
%m \leq E(t_n(w_n-u))&=\frac{(p-2)t_n}{2p} \left(\int_{\R^2} |\partial_x u|^2 \,dxdy +\int_{\R^2} |\partial_{yy} u|^2 \,dxdy + \omega \int_{\R^2} |u|^2 \,dxdy \right)\\
%&<\frac{p-2}{2p} \left(\int_{\R^2} |\partial_x u|^2 \,dxdy +\int_{\R^2} |\partial_{yy} u|^2 \,dxdy + \omega \int_{\R^2} |u|^2 \,dxdy \right) \\
%&=E(w_n-u)-\frac 1p I(w_n-u).
%\end{align*}
By Fatou's Lemma, there holds that
\begin{align*}
m_{\omega}+o_n(1)=J_{\omega}(u_n)&=J_{\omega}(u_n) - \frac 12 I_{\omega}(u_n) \\
&=\frac{p-2}{2p} \int_{\R^2} |u_n|^p \,dxdy \\
&\geq \frac{p-2}{2p} \int_{\R^2} |u|^p\,dxdy+o_n(1) =J_{\omega}(u)-\frac 1p I_{\omega}(u)=J_{\omega}(u).
\end{align*}
Then we conclude that $m_{\omega}=J_{\omega}(u)$ and $u \in H^{1,2}(\R^2)$ is a ground state to \eqref{equ}. The proof is completed.
\end{proof}

\begin{proof}[Proof of Theorem \ref{symmetry}]
First we prove axial symmetry of ground states to \eqref{equ} with respect to $x$-axis. Here we are going to make use of arguments from \cite{L}. Let $u \in H^{1,2}(\R^2)$ be a ground state to \eqref{equ}. Let $x_0 \in \R$ be such that 
$$
\int_{x_0}^{+\infty} \int_{\R}|u|^p\,dxdy=\int_{-\infty}^{x_0} \int_{\R} |u|^p \,dxdy.
$$
For simplicity, we shall assume that $x_0=0$ by making a translation, i.e.
\begin{align} \label{lp}
\int_{\R^2_+}|u|^p\,dxdy=\int_{\R^2_-} |u|^p \,dxdy,
\end{align}
where $\R^2_+:=(0, +\infty) \times \R$ and $\R^2_-:=\R^2 \backslash \R^2_+$. . Define
\begin{align*}
\widetilde{u}(x, y):=
\left\{
\begin{aligned}
&u(x, y), &\mbox{if} \,\,\, x>0,\\
& u(-x, y), &\mbox{if} \,\,\, x \leq 0.
\end{aligned}
\right.
\end{align*}
Then there holds that
\begin{align} \label{pne} 
\begin{split}
&\frac 12 \int_{\R^2_+} |\partial_x u|^2 \, dxdy+ \frac 12 \int_{\R^2_+} |\partial_{yy} u|^2 \, dxdy + \frac 12 \int_{\R^2_+} |\partial_x u|^2 \, dxdy - \frac 1 p\int_{\R^2_+} |u|^p \, dxdy \\
&=\frac 12 \int_{\R^2_-} |\partial_x u|^2 \, dxdy+ \frac 12 \int_{\R^2_-} |\partial_{yy} u|^2 \, dxdy + \frac 12 \int_{\R^2_-} |\partial_x u|^2 \, dxdy - \frac 1 p\int_{\R^2_-} |u|^p \, dxdy,
\end{split}
\end{align}
Indeed, to prove \eqref{pne}, we shall argue by contradiction. Let us denote $A$ and $B$ by the left hand side and the right side hand of \eqref{pne}, respectively. Then we may assume that $A<B$.
Observe that
$$%\begin{align} \label{gx1}
%\int_{\R^2} |\partial_x \widetilde{u}|^2 \, dxdy=\int_0^{+\infty} \int_{\R} |\partial_x \widetilde{u}|^2 \, dxdy+\int_{-\infty}^{0} \int_{\R} |\partial_x \widetilde{u}|^2 \, dxdy=2\int_0^{+\infty} \int_{\R} |\partial_x u|^2 \, dxdy,
\int_{\R^2_+} |\partial_x \widetilde{u}|^2 \, dxdy=\int_{\R^2_-} |\partial_x \widetilde{u}|^2 \, dxdy=\int_{\R^2_+} |\partial_x u|^2 \, dxdy,
$$%\end{align}
$$%\begin{align} \label{gy2}
\int_{\R^2_+} |\partial_{yy} \widetilde{u}|^2 \, dxdy=\int_{\R^2_-} |\partial_{yy} \widetilde{u}|^2 \, dxdy=\int_{\R^2_+} |\partial_{yy} u|^2 \, dxdy, 
$$%\end{align}
$$%\begin{align} \label{l2}
\int_{\R^2_+} |\widetilde{u}|^2 \, dxdy=\int_{\R^2_-} | \widetilde{u}|^2 \, dxdy=\int_{\R^2_+} |u|^2 \, dxdy.
$$%\end{align}
Applying \eqref{lp} and the assumption that $A<B$, we then get that $J_{\omega}(\widetilde{u})<J_{\omega}(u)$ and $I_{\omega}(\widetilde{u})<I_{\omega}(u)=0$. Hence there exists $0<t_{\widetilde{u}}<1$ such that $I_{\omega}(t_{\widetilde{u}}\widetilde{u})=0$. Therefore, there holds that 
\begin{align*}
m_{\omega} \leq J_{\omega}(t_{\widetilde{u}}\widetilde{u})<J_{\omega}(t_{\widetilde{u}}u) < J_{\omega}(u)=m_{\omega}.
\end{align*}
Then we reach a contradiction. This implies that $A=B$ and \eqref{pne} holds true. At this point, using  \eqref{pne}, we can conclude that $J_{\omega}(\widetilde{u})=m_{\omega}$ and $I_{\omega}(\widetilde{u})=0$. It then follows $\widetilde{u}$ is also a ground state to \eqref{equ}. Invoking unique continuation principle \cite[Theorem A.1]{dS}, we then have that $u=\widetilde{u}$ and the desired conclusion follows.

Next we demonstrate axial symmetry of ground states to \eqref{equ} with respect to $x$-axis and $y$-axis for $p \in \mathbb{N}$, which is inspired by \cite{BL, LS}. Define Fourier rearrangement of $u$ with respect to the vector {\bf{e}}  with $|{\bf{e}}|=1$ by
$$
u^{\# {\bf{e}}}:=\mathcal{F}^{-1} \left\{(\mathcal{F} u)^{\ast {\bf{e}}}\right\},
$$
where $f^{\ast {\bf{e}}}$ denotes the Steiner rearrangement of $f$ with respect to the vector ${\bf {e}}$ in $\R^2$, $\mathcal{F} f$ denotes the Fourier transform of $f$ given by
$$
(\mathcal{F} f)(\xi):=\int_{\R^2} f(z) e^{-2\pi \textnormal{i} z \cdot \xi}\, dz.
$$
And $\mathcal{F}^{-1} f$ denotes the inverse Fourier transform of $f$. It follows from the definition of Fourier rearrangement that $u^{\ast {\bf{e}}}$ is symmetric with respect to ${\bf{e}}$. Reasoning as the proof of \cite[Lemma A.1]{BL}, we can get that
\begin{align} \label{rea1}
\left\|\partial_x u^{\# {\bf{e}}}\right\|_2 \leq \left\|\partial_x u\right\|_2, \quad \left\|\partial_{yy} u^{\# {\bf{e}}}\right\|_2 \leq \left\|\partial_{yy} u\right\|_2, %\quad \left\|u\right\|_{2m} \leq \left\|u^{\# {\bf{e}}}\right\|_{2m}, \quad m \in \mathbb{N}.
\end{align}
and
\begin{align} \label{rea2}
%\left\|\partial_x u^{\# {\bf{e_2}}}\right\|_2 \leq \left\|\partial_x u\right\|_2, \quad \left\|\partial_{yy} u^{\# {\bf{e_2}}}\right\|_2 \leq \left\|\partial_{yy} u\right\|_2, \quad 
\left\|u\right\|_{2m} \leq \left\|u^{\# {\bf{e}}}\right\|_{2m}, \quad m \in \mathbb{N}.
\end{align}
%where ${\bf{e_1}}:=(0,1)$ and ${\bf{e_2}}:=(1,0)$.
Following the proof of \cite[Theorem 1]{LS}, we can derive that the inequalities in \eqref{rea1} and \eqref{rea2} occur if and only if $u$ equals to $u^{\# \bf{e}}$ up to a constant phase and translation, i.e.
$$
u(z)=e^{\textnormal{i} \alpha} u^{\# \bf{e}}(z-z_0), \quad \alpha \in \R,  z_0 \in \R^2.
$$
Let $u \in H^{1,2}(\R^2)$ be a ground state to \eqref{equ}. Define 
$$
w:=\left(u^{\# {\bf{e_1}}}\right)^{\# {\bf {e_2}}}.
$$
Using \eqref{rea1} and \eqref{rea2}, we then obtain that $J_{\omega}(w) \leq J_{\omega}(u)$ and $I_{\omega}(w) \leq I_{\omega}(u)=0$. Then we see that there exists $0<t_w \leq 1$ such that $I_{\omega}(t_ww)=0$. Therefore, from \eqref{rea1} and \eqref{rea2}, we have that
\begin{align*}
m_{\omega} \leq J_{\omega}(t_w w) &= J_{\omega}(t_w w)-\frac 1p I_{\omega}(t_w w)\\
&=\frac{t_w(p-2)}{2p} \left(\int_{\R^2} |\partial_x w|^2 \,dxdy +\int_{\R^2} |\partial_{yy} w|^2 \,dxdy + \omega \int_{\R^2} |w|^2 \,dxdy \right) \\
& \leq \frac{p-2}{2p} \left(\int_{\R^2} |\partial_x u|^2 \,dxdy +\int_{\R^2} |\partial_{yy} u|^2 \,dxdy + \omega \int_{\R^2} |u|^2 \,dxdy \right)\\
&=J_{\omega}(u) -\frac 1p I_{\omega}(u)=J_{\omega}(u)=m_{\omega},
\end{align*}
from which we derive that $m_{\omega}=J_{\omega}(t_w w)$ and  $t_{w}=1$. Moreover, we have that
$$
\left\|\partial_x u\right\|_2 = \left\|\partial_x w\right\|_2, \quad \left\|\partial_{yy} w\right\|_2 = \left\|\partial_{yy} u\right\|_2, \quad \left\|u\right\|_{p} = \left\|w\right\|_p.
$$ 
It then follows that
$$
u(z)=e^{\textnormal{i} \alpha} \left(u^{\# {\bf{e_1}}}\right)^{\# {\bf {e_2}}}(z-z_0), \quad \alpha \in \R, z_0 \in \R^2.
$$
This completes the proof.
\end{proof}

\begin{proof}[Proof of Theorem \ref{decay}]
Observe that $u \in H^{1,2}(\R^2)$ is a solution to \eqref{equ}, then $v:=\omega^{\frac 1p} u(\omega^{\frac 12} \cdot, \omega^{\frac 14} \cdot) \in H^{1,2}(\R^2)$ is a solution to \eqref{equ} with $\omega=1$. For simplicity, we shall assume that $\omega=1$. Let $\mathcal{K}$ denote the fundamental solution to the equation
$$
-\partial_{xx} u+ \partial_{yyyy} u + u=0 \quad \mbox{in} \,\, \R^2.
$$
It is characterized by
$$
\mathcal{K}(x ,y)=\int_0^{+\infty} e^{-t} \mathcal{H}(x, y, t) \, dt,
$$
where
$$
\mathcal{H}(x ,y, t):=\int_{\R^2} e^{-2\pi (x, y) \cdot(\xi_1, \xi_2)-t (|\xi_1|^2 +|\xi_2|^4)} \,d\xi_1d\xi_2.
$$
Indeed, for any $h \in \mathcal{S}$, we see that
\begin{align*}
\langle \mathcal{K}, h \rangle&=\int_{\R^2} \int_0^{+\infty} \int_{\R^2} e^{-2\pi \textnormal{i} (x, y) \cdot(\xi_1, \xi_2)-t (1+|\xi_1|^2 +|\xi_2|^4)} h(x ,y)\,d\xi_1d\xi_2 \, dt dxdy \\
&=\int_{\R^2}  \int_{\R^2} e^{-2\pi \textnormal{i} (x, y) \cdot(\xi_1, \xi_2)} h(x ,y) \int_0^{+\infty} e^{-t (1+|\xi_1|^2 +|\xi_2|^4)} \,dt \,d\xi_1d\xi_2 \, dxdy \\
&=\int_{\R^2} \frac{1}{1+|\xi_1|^2 +|\xi_2|^4} \int_{\R^2} e^{-2\pi \textnormal{i} (x, y) \cdot(\xi_1, \xi_2)} h(x ,y) \,dxdy d\xi_1d\xi_2\\
&=\left\langle \frac{1}{1+|\xi_1|^2 +|\xi_2|^4}, \mathcal{F} h \right\rangle.
\end{align*}
Observe that
\begin{align*}
\mathcal{H}(x,y, t)&=\int_{\R^2} e^{-2\pi \textnormal{i}(x, y) \cdot(\xi_1, \xi_2)-t(|\xi_1|^2 +|\xi_2|^4)} \,d\xi_1d\xi_2= \int_{\R}  e^{-2\pi \textnormal{i}x\xi_1-t|\xi_1|^2} \,d\xi_1 \int_{\R} e^{-2\pi \textnormal{i} y \xi_2-t|\xi_2|^4} \,d\xi_2.
\end{align*}
Moreover, there holds that
\begin{align} \label{h1}
\mathcal{H}_1(x ,t):=\int_{\R}  e^{-2\pi \textnormal{i}x\xi_1-t|\xi_1|^2} \,d\xi_1= \sqrt{\frac{\pi}{t}} e^{-\frac{\pi^2 |x|^2}{t}},
\end{align}
\begin{align} \label{h2}
\mathcal{H}_2(y,t):=\int_{\R} e^{-2\pi \textnormal{i} y \xi_2-t|\xi_2|^4} \,d\xi_2=t^{-\frac 14}\int_{\R} e^{-2\pi \textnormal{i} t^{-\frac 14} y\xi_2-|\xi_2|^4} \,d\xi_2=t^{-\frac 14} \mathcal{H}_2\left(t^{-\frac 14} y, 1\right).
\end{align}
It follows from \cite{Bo} that there exist $c_0, c_1, c_2>0$ such that, for $y>0$ large enough,
$$
\mathcal{H}_2(y ,1)=\int_{\R} e^{-2\pi \textnormal{i} y \xi_2-|\xi_2|^4} \,d\xi_2 \sim  c_0 y^{-\frac 13}  e^{-c_1 y^{\frac 43}} \cos\left(c_2 y^{\frac 43}-\frac{\pi}{6}\right).
$$
%\begin{align*}
%\frac{d^3}{dy^3} F(y)=8\pi^3\textnormal{i}\int_{\R} e^{-2\pi \textnormal{i} y \xi_2-|\xi_2|^4} \xi_2^3 \,d\xi_2&=-2\pi^3\textnormal{i}\int_{\R} e^{-2\pi \textnormal{i} y \xi_2} \,d (e^{-|\xi_2|^4})\\
%&=-4\pi^4 y\int_{\R} e^{-2\pi \textnormal{i} y \xi_2-|\xi_2|^4}\,d\xi_2=-4\pi^4 y F(y).
%\end{align*}
%This leads to 
%$$
%F(y)=e^{-C y^{\frac 4 3}}.
%$$
In light of \eqref{h2}, we then find that, for any $y \in \R$ and $t>0$,
\begin{align} \label{eh2}
\left|\mathcal{H}_2(y, t) \right|\leq  c_3 \min \left\{t^{-\frac 14}, t^{-\frac 16} |y|^{-\frac 13}e^{-c_1 t^{-\frac 13} |y|^{\frac 43}}\right\}.
\end{align}
As a consequence, by \eqref{h1} and \eqref{eh2}, we are able to compute that, for any $q \geq 1$,
\begin{align*}
\left\|\mathcal{H}\right\|_q^q =\left\|\mathcal{H}_1\mathcal{H}_2\right\|_q^q &\leq \pi^{\frac q 2} \int_{\R}\int_{|y| \leq y_t} t^{-\frac{3q}{4}} e^{-\frac{q \pi |x|^2}{t}} \,dxdy + \pi^{\frac q 2}  \int_{\R}\int_{|y| > y_t} t^{-\frac{2q}{3}} e^{-\frac{q \pi |x|^2}{t}} |y|^{-\frac q 3}e^{-c_1^q t^{-\frac q3} |y|^{\frac {4q}{3}}}\,dxdy \\
& \leq c_4 t^{-\frac{3}{4} \left(q-1\right)},
\end{align*}
where $y_t>0$ satisfies the identity
$$
\frac {1}{12} \ln t=\frac 13 \ln y_t + c_1 t^{-\frac 13} y_t^{\frac 4 3}.
$$
Therefore, by Minkowski's inequality, we conclude that
\begin{align*}
\left\|\mathcal{K}\right\|_q \leq \int_0^{+\infty} e^{-t} \left\|\mathcal{H}\right\|_q \, dt \leq c_4^{\frac 1q} \int_0^{+\infty} e^{-t} t^{-\frac{3}{4} + \frac{3}{4q}}\, dt<+\infty.
\end{align*}
It then implies that $\mathcal{K} \in L^q(\R^2)$ for any $q \geq 1$. Note that we assumed that $u \in H^{1,2}(\R^2)$ is a solution to \eqref{equ} with $\omega=1$, then
$$
u=\mathcal{K} \ast (|u|^{p-2} u) =\int_{\R^2} \mathcal{K}(z-\xi) |u|^{p-2} u(\xi) \, d\xi, \quad z=(x, y), \,\,\, \xi=(\xi_1, \xi_2) \in \R^2.
$$
Using H\"older's inequality and the embedding theorem in $H^{1,2}(\R^2)$, we then have that $u \in L^{\infty}(\R^2)$. Further, by standard bootstrap arguments, we then derive that $u \in C^{\infty}(\R^2)$. Combining \eqref{h1} and \eqref{eh2}, we have that, for any $(x, y) \in \R^2$ satisfying $|x| +|y|>0$ large,
\begin{align*}
|\mathcal{K}(x ,y)| &\leq C |y|^{-\frac 13}\int_{0}^{+\infty} t^{-\frac 23}e^{-t-\frac{\pi^2 |x|^2}{t}-c_1 t^{-\frac 13} |y|^{\frac 43}} \,dt \\
& \leq C |y|^{-\frac 13} \int_{0}^{1}  t^{-\frac 23}e^{-\left(t+t^{-\frac 13}\left(\pi^2 |x|^2+c_1 |y|^{\frac 43}\right)\right) }\,dt + C |y|^{-\frac 13} \int_{1}^{+\infty}  t^{-\frac 23}e^{-\left(t+t^{-1}\left(\pi^2 |x|^2+c_1 |y|^{\frac 43}\right)\right)} \,dt \\
& \leq C |y|^{-\frac 13} \int_{0}^{1}  t^{-\frac 23} e^{-\left(1+\pi^2 |x|^2+c_1 |y|^{\frac 43}\right)} \, dt + C |y|^{-\frac 13}\int_{1}^{+\infty}  t^{-\frac 23} e^{-\frac t 2}e^{-\left(2\pi^2 |x|^2+2c_1 |y|^{\frac 43}\right)^{\frac 12}} \, dt \\
& \leq C |y|^{-\frac 13} e^{-\left(2\pi^2 |x|^2+2c_1 |y|^{\frac 43}\right)^{\frac 12}},
\end{align*}
where we used the fact that the function $t \mapsto t+at^{-\frac 13}$ is decreasing on $(0, 1)$ for $a>0$ large and the inequality
$$
\frac t2 + t^{-1}b \geq \sqrt{2b}, \quad b>0.
$$
Then we get that there exist $C_1, C_2>0$ such that, for any $x, y \in \R$,
$$
|\mathcal{K}(x,y)| \leq C_1 |y|^{-\frac 13} e^{-C_2\left(|x|+|y|^{\frac 23}\right)}.
$$
At this point, reasoning as the proof of \cite[Corollary 3.1.4]{BoLi}, we then have the desired result. This completes the proof.
\end{proof}

\begin{proof}[Proof of Theorem \ref{stability}]
Let $u_{\omega} \in H^{1,2}(\R^2)$ be a ground state to \eqref{equ} at the energy level $m_{\omega}>0$. Then there holds that
$$
m_{\omega}=\frac 12 \int_{\R^2} |\partial_x u_{\omega}|^2 \,dxdy +\frac 12 \int_{\R^2} |\partial_{yy} u_{\omega}|^2 \,dxdy + \frac{\omega}{2} \int_{\R^2} |u_{\omega}|^2 \,dxdy -\frac 1p \int_{\R^2} |u_{\omega}|^p \, dxdy,
$$
$$
\int_{\R^2} |\partial_x u_{\omega}|^2 \,dxdy +\int_{\R^2} |\partial_{yy} u_{\omega}|^2 \,dxdy + \omega \int_{\R^2} |u_{\omega}|^2 \,dxdy =\int_{\R^2} |u_{\omega}|^p \, dxdy
$$
and
$$
\int_{\R^2} |\partial_x u_{\omega}|^2 \,dxdy +\int_{\R^2} |\partial_{yy} u_{\omega}|^2 \,dxdy =\frac{3(p-2)}{4p}\int_{\R^2} |u_{\omega}|^p \, dxdy,
$$
where the last identity comes from Lemma \ref{ph}. It is not hard to obtain from the identities above that
$$
m_{\omega}=\frac{2(p-2)\omega}{p+6} \int_{\R^2} |u_{\omega}|^2 \,dx.
$$
This indicates that all ground states to \eqref{equ} admit the same $L^2$-norm. Let us now define the following minimization problem,
\begin{align} \label{min2}
m(c):=\inf_{v \in S(c)} E(v), 
\end{align}
where
$$
S(c):=\left\{ v \in H^{1,2}(\R^2) : \|v\|_2^2=c>0\right\}.
$$
Next we shall claim that $u_{\omega} \in H^{1,2}(\R^2)$ is a ground state to \eqref{equ} if and only if it is a minimizer to \eqref{min2} with $c=\|u_{\omega}\|_2^2$. To prove this,  we first define $v_t:=t^{\frac 38} v(t^{\frac 12} \cdot, t^{\frac 14} \cdot)$ for any $v \in H^{1,2}(\R^2)$ and $t>0$. Direct computations lead to $\|v_t\|_2=\|v\|$ and
\begin{align} \label{scaling1}
E(v_t)=\frac{t}{2} \int_{\R^2} |\partial_x v|^2 \, dxdy + \frac{t}{2} \int_{\R^2} |\partial_{yy} v|^2 \, dxdy -\frac{t^{\frac 38 (p-2)}}{p} \int_{\R^2} |v|^p \, dxdy.
\end{align}
Thanks to $2<p<\frac{14}{3}$, from \eqref{scaling1}, then $E(v_t)<0$ for any $t>0$ small. This clearly shows that $m(c)<0$ for any $c>0$. On the other hand, from \eqref{gn}, then
\begin{align*}
E(v) \geq \frac{1}{2} \int_{\R^2} |\partial_x v|^2 \, dxdy + \frac{1}{2} \int_{\R^2} |\partial_{yy} v|^2 \, dxdy -\frac{C_{opt}}{p} \left(\int_{\R^2} |\partial_x v|^2 \, dxdy +\int_{\R^2} |\partial_{yy} v|^2 \, dxdy \right)^{\frac{3p-6}{8}} c^{\frac{p+6}{8}}.
\end{align*}
It gives that $m(c)>-\infty$, because of $2<p<\frac{14}{3}$. Consequently, we have that $-\infty<m(c)<0$ for any $c>0$. In addition, by scaling technique, it is standard to obtain that
$$
m(c_1+c_2)<m(c_1)+m(c_2),
$$
where $c_1, c_2>0$. In the spirit of the Lions concentration compactness principle in \cite{Li1, Li2}, with the help of Lemma \ref{ccl}, we then derive that any minimizing sequence to \eqref{min2} is compact in $H^{1,2}(\R^2)$ up to translations for any $c>0$. In particular, there exists a minimizer to \eqref{min2} for any $c>0$. Observe that
\begin{align}\label{mp}
m_{\omega}=\inf_{u \in P} J_{\omega}(u),
\end{align}
where
$$
P:=\{ u \in H^{1,2}(\R^2) \backslash \{0\} : Q(u)=0\}.
$$
%where 
%$$
%Q(u):=\int_{\R^2} |\partial_x u|^2 \, dxdy + \int_{\R^2} |\partial_{yy} u|^2 \, dxdy -\frac{3(p-2)}{4p}\int_{\R^2} |u|^p \, dxdy.
%$$
Let $u_{\omega} \in H^{1,2}(\R^2)$ be a ground state to \eqref{equ}. Define $c:=\|u_{\omega}\|_2^2$. Let $v \in S(c)$ be a minimizer to \eqref{min1}, then $E(v)=m(c)$ and $v$ solves the equation
$$
-\partial_{xx} v+ \partial_{yyyy} v +\omega_c v=|v|^{p-2} v \quad \mbox{in} \,\, \R^2,
$$
where $\omega_c \in \R$ is Lagrange multiplier related to the constraint defined by
$$
\omega_c=\frac 1 c \left(\int_{\R^2} |u|^p \, dxdy-\int_{\R^2} |\partial_x v|^2 \, dxdy -\int_{\R^2} |\partial_{yy} v|^2 \, dxdy\right).
$$  
It then follows from Lemma \ref{ph} that $Q(v)=0$. Therefore, applying \eqref{mp}, we derive that
$$
m(c) \leq E(u_{\omega})=J_{\omega}(u_{\omega})-\frac{\omega}{2} \|u_{\omega}\|_2^2 \leq J_{\omega}(v)-\frac{\omega}{2} \|v\|_2^2=E(v)=m(c).
$$
Thus the claim follows. Arguing by contradiction and following the strategies in \cite{CL}, we have that the set of minimizers to \eqref{min1} is orbitally stable for any $c>0$. Making use of the claim, we then get the desired conclusion. This completes the proof.
\end{proof}

Next we are going to discuss orbital instability of standing waves associated to ground states to \eqref{equ} for $p>\frac{14}{3}$. For this, we shall make use of arguments from \cite{SS}. The following study is also inspired by \cite{BIK}. In the sequel, we denote by $u_{\omega}$ a ground state to \eqref{equ}. Define
$$
\mathcal{U}_{\eps}(u_{\omega}):=\left\{ u \in H^{1,2}(\R^2) : \inf_{\theta \in \R} \|u-e^{\textnormal{i}\theta}u_{\omega}\|_{H^{1,2}} <\eps\right\},
$$
$$
\psi_{\omega}:=\frac 3 8 u_{\omega} + \frac 12 x \partial_x u_{\omega} + \frac 14 y \partial_y u_{\omega}.
$$
For $u \in \mathcal{U}_{\eps}(u_{\omega})$, we define
\begin{align} \label{defau}
A(u):=-\langle e^{\textnormal{i} \alpha(u)} u, \textnormal{i} \psi_{\omega} \rangle, \quad \alpha(u):=-\tan^{-1} \frac{\langle I(u), \textnormal{i} u_{\omega}\rangle}{\langle I(u), u_{\omega} \rangle},
\end{align}
where $I$ is a natural isomorphism from $H^{1,2}(\R^2)$ to its dual space defined by $\langle I(u), v \rangle:=(u, v)$ and $(u, v)$ stands for the inner product in $H^{1,2}(\R^2)$ for $u, v \in H^{1,2}(\R^2)$.

\begin{lem} \label{propau}
For any $\eps>0$, there holds that $A: \mathcal{U}_{\eps}(u_{\omega}) \to \R$ is a $C^1$ functional such that
\begin{itemize}
\item [$(\textnormal{i})$] $A(e^{\textnormal{i} \theta} u)=A(u)$,
\item [$(\textnormal{ii})$] $A'(u_{\omega})= -\textnormal{i} \psi_{\omega}$,
\item [$(\textnormal{iii})$] $\mathcal{R} (A'(u)) \subset X$,
\item [$(\textnormal{iv})$] $\langle u, \textnormal{i} A'(u) \rangle=0$.
\end{itemize}
\end{lem}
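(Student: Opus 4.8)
The plan is to first pin down the phase $\alpha$ and its regularity, and then read off the four properties: all of them are consequences of the way $\alpha$ is constructed together with a single scaling identity. Throughout, $u_{\omega}$ may be taken real, so $\psi_{\omega}$ is real, and $\langle\cdot,\cdot\rangle$ in the definition of $A$ is the real duality pairing $H^{1,2}\times H^{-1,-2}$, which coincides with the real $L^2$ pairing because $\psi_{\omega}\in L^2(\R^2)$ by the decay of $u_{\omega}$ and its derivatives (Theorem \ref{decay}).

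\textbf{Step 1 (the phase $\alpha$).} I would first observe that the formula for $\alpha$ is equivalent to the orthogonality condition $\langle I(e^{\textnormal{i}\alpha(u)}u),\textnormal{i}u_{\omega}\rangle=0$, i.e. $e^{\textnormal{i}\alpha(u)}u\perp \textnormal{i}u_{\omega}$ in $H^{1,2}(\R^2)$, singled out among branches by $\langle I(e^{\textnormal{i}\alpha(u)}u),u_{\omega}\rangle>0$. Indeed, writing $e^{\textnormal{i}\alpha}u=\cos\alpha\,u+\sin\alpha\,\textnormal{i}u$ and expanding gives $\tan\alpha=-\langle I(u),\textnormal{i}u_{\omega}\rangle/\langle I(u),u_{\omega}\rangle$, which is the stated relation. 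To get a $C^1$ selection on $\mathcal{U}_{\eps}(u_{\omega})$ I would apply the implicit function theorem to $F(u,\alpha):=\langle I(e^{\textnormal{i}\alpha}u),\textnormal{i}u_{\omega}\rangle$: since $u_{\omega}$ is real one has $F(u_{\omega},0)=\langle I(u_{\omega}),\textnormal{i}u_{\omega}\rangle=0$, while $\partial_{\alpha}F(u_{\omega},0)=\langle I(u_{\omega}),u_{\omega}\rangle=\|u_{\omega}\|_{H^{1,2}}^2>0$; shrinking $\eps$ keeps $\partial_{\alpha}F$ bounded away from $0$ on the whole tube. Hence $\alpha\in C^1(\mathcal{U}_{\eps}(u_{\omega}),\R)$, and $A$ is $C^1$ as the composition of $\alpha$, the smooth map $u\mapsto e^{\textnormal{i}\alpha(u)}u$ and the bounded pairing against $\textnormal{i}\psi_{\omega}$.

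\textbf{Step 2 ((i) and (iv)).} The defining orthogonality is gauge covariant, $\alpha(e^{\textnormal{i}\theta}u)=\alpha(u)-\theta$, whence $e^{\textnormal{i}\alpha(e^{\textnormal{i}\theta}u)}(e^{\textnormal{i}\theta}u)=e^{\textnormal{i}\alpha(u)}u$ and therefore $A(e^{\textnormal{i}\theta}u)=A(u)$, which is (i). Differentiating this identity in $\theta$ at $\theta=0$ gives $\langle A'(u),\textnormal{i}u\rangle=0$; since multiplication by $\textnormal{i}$ is antisymmetric for the real pairing, $\langle u,\textnormal{i}A'(u)\rangle=-\langle \textnormal{i}u,A'(u)\rangle=0$, which is (iv).

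\textbf{Step 3 ((ii)).} Using $\alpha(u_{\omega})=0$, I would differentiate $t\mapsto e^{\textnormal{i}\alpha(u_{\omega}+th)}(u_{\omega}+th)$ at $t=0$ to get $\alpha'(u_{\omega})[h]\,\textnormal{i}u_{\omega}+h$, so that
$$
A'(u_{\omega})[h]=-\alpha'(u_{\omega})[h]\,\langle \textnormal{i}u_{\omega},\textnormal{i}\psi_{\omega}\rangle-\langle h,\textnormal{i}\psi_{\omega}\rangle.
$$
The key cancellation is $\langle \textnormal{i}u_{\omega},\textnormal{i}\psi_{\omega}\rangle=(u_{\omega},\psi_{\omega})_{L^2}=0$: since $\psi_{\omega}=\frac{d}{d\lambda}(u_{\omega})_{\lambda}\big|_{\lambda=1}$ is the generator of the mass-preserving scaling $(u_{\omega})_{\lambda}=\lambda^{3/8}u_{\omega}(\lambda^{1/2}\cdot,\lambda^{1/4}\cdot)$ with $\|(u_{\omega})_{\lambda}\|_2=\|u_{\omega}\|_2$, differentiating $\|(u_{\omega})_{\lambda}\|_2^2$ at $\lambda=1$ yields $2(u_{\omega},\psi_{\omega})_{L^2}=0$. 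Thus the $\alpha'$-term drops out and $A'(u_{\omega})[h]=-\langle h,\textnormal{i}\psi_{\omega}\rangle=\langle -\textnormal{i}\psi_{\omega},h\rangle$, i.e. $A'(u_{\omega})=-\textnormal{i}\psi_{\omega}$, which is (ii).

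\textbf{Step 4 ((iii)) and the main difficulty.} For general $u$, differentiating the explicit expression shows $A'(u)$ is the sum of the $L^2$-type functional $h\mapsto-\langle h,\textnormal{i}e^{-\textnormal{i}\alpha(u)}\psi_{\omega}\rangle$ and a scalar multiple of $\alpha'(u)$, whose representative through $I$ is the explicit $H^{1,2}$ element $e^{-\textnormal{i}\alpha(u)}\textnormal{i}u_{\omega}/\langle I(e^{\textnormal{i}\alpha(u)}u),u_{\omega}\rangle$; by the regularity and decay of $u_{\omega}$ and $\psi_{\omega}$ both pieces lie in $X$, giving (iii). I expect the main obstacle to be Step 1, namely producing a single-valued $C^1$ branch of $\alpha$ over the whole tube $\mathcal{U}_{\eps}(u_{\omega})$ (resolving the branch ambiguity of $\tan^{-1}$ via the normalization $\langle I(e^{\textnormal{i}\alpha(u)}u),u_{\omega}\rangle>0$), together with the scaling identity $(u_{\omega},\psi_{\omega})_{L^2}=0$ that powers the cancellation in Step 3; once these are in hand, everything else is bookkeeping with the real bilinear pairing.
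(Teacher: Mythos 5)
Your proposal is correct and follows essentially the same route as the paper: the same gauge-covariance argument $\alpha(e^{\textnormal{i}\theta}u)=\alpha(u)-\theta$ for (i) and (iv), the same differentiation of $A$ with the key cancellation $\langle u_{\omega},\psi_{\omega}\rangle=0$ for (ii), and the same identification of the representative of $\alpha'(u)$ for (iii) -- with the added value that you actually justify the two points the paper treats as obvious, namely the single-valued $C^1$ branch of $\alpha$ (via the implicit function theorem applied to $F(u,\alpha)=\langle I(e^{\textnormal{i}\alpha}u),\textnormal{i}u_{\omega}\rangle$) and the orthogonality $(u_{\omega},\psi_{\omega})_{L^2}=0$ (via the mass-preserving scaling generating $\psi_{\omega}$). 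The only blemish is the opening claim that ``$u_{\omega}$ may be taken real,'' which is not established in the paper for general $p>2$; fortunately it is also unnecessary, since every place you invoke it (e.g. $F(u_{\omega},0)=0$) holds for an arbitrary complex $u_{\omega}$ once all pairings are read as real parts of the complex inner products, exactly as in your own convention.
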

\begin{proof}
In view of the definition of $A(u)$, we first know that $A: \mathcal{U}_{\eps}(u_{\omega}) \to \R$ is of class $C^1$. Note that, for any $\theta \in \R$,
$$
e^{\textnormal{i} \alpha(e^{\textnormal{i} \theta} u)}e^{\textnormal{i} \theta} u=e^{\textnormal{i} \alpha(u)} u.
$$ 
Then $A(e^{\textnormal{i} \theta} u)=A(u)$ for any $u \in \mathcal{U}_{\eps}(u_{\omega})$. Thus the assertion $(\textnormal{i})$ follows. For any $h \in H^{1,2}(\R^2)$, we see that
$$
A'(u) h=-\langle e^{\textnormal{i} \alpha(u)} h, \textnormal{i} \psi_{\omega} \rangle-\langle e^{\textnormal{i} \alpha(u)} u, \textnormal{i} \psi_{\omega} \rangle \langle \textnormal{i} \alpha'(u), h \rangle.
$$
It results in
\begin{align} \label{dau}
A'(u)=-\textnormal{i}e^{-\textnormal{i} \alpha(u)}\psi_{\omega}- \langle e^{\textnormal{i} \alpha(u)} u, \psi_{\omega} \rangle\alpha'(u).
\end{align}
It is obvious that $\alpha(u_{\omega})=0$ and $\langle u_{\omega}, \psi_{\omega} \rangle=0$. It then follows from \eqref{dau} that
$$
A'(u_{\omega})=-\textnormal{i}\psi_{\omega}-\langle u_{\omega}, \psi_{\omega} \rangle \alpha'(u)=-\textnormal{i}\psi_{\omega}.
$$
This proves the assertion $(\textnormal{ii})$. Observe that $\alpha'(u) \in H^{1,2}(\R^2)$ for any $u \in \mathcal{U}_{\eps}(u_{\omega})$. Then the assertion $(\textnormal{iii})$ follows by \eqref{dau}. Using the assertion $(\textnormal{i})$, we have that
$$
0=\frac{d}{d \theta} A(e^{\textnormal{i} \theta} u)\mid_{\theta=0}=\langle A'(u), \textnormal{i} u\rangle.
$$
Then the assertion $(\textnormal{iv})$ follows. Thus the proof is completed.
\end{proof}

\begin{lem}\label{ode}
Let $\mathcal{O}$ be an open set satisfying $\mathcal{O} \subset \mathcal{U}_{\eps}(u_{\omega})$ for $\eps>0$ small. Then there exist $s_0>0$ and a smoothing function $R : (-s_0, s_0) \times \mathcal{O}\to \mathcal{U}_{\eps}(u_{\omega})$ such that
\begin{itemize}
\item [$(\textnormal{i})$] $R(0, u)=u$,
%\item [$(\textnormal{ii})$] $e^{\textnormal{i} \theta} R(s, u)=R(s, e^{\textnormal{i} \theta} u)$,
\item [$(\textnormal{ii})$] $\frac{d}{ds} R(s, u) \mid_{s=0}=-\textnormal{i}A'(u)$,
\item [$(\textnormal{iii})$] $\|R(s, u)\|_2=\|R(0, u)\|_2$.
\end{itemize} 
\end{lem}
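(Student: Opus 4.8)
The plan is to realize $R$ as the local flow generated by the vector field $V(u) := -\textnormal{i}A'(u)$ on $\mathcal{U}_{\eps}(u_{\omega})$. Concretely, for each $u \in \mathcal{O}$ I would take $R(\cdot, u)$ to be the unique solution of the Cauchy problem
\begin{align*}
\frac{d}{ds} R(s, u) = -\textnormal{i} A'(R(s, u)), \quad R(0, u) = u.
\end{align*}
With this definition, property $(\textnormal{i})$ is just the initial condition, and property $(\textnormal{ii})$ is the equation evaluated at $s = 0$, i.e. $\frac{d}{ds}R(s,u)\mid_{s=0} = -\textnormal{i}A'(u)$. Hence the whole content of the lemma reduces to two tasks: making sense of this ODE in the energy space and solving it with smooth dependence on the data, and verifying the mass conservation $(\textnormal{iii})$.

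For the first task I would first note that $A$ is in fact smooth on $\mathcal{U}_{\eps}(u_{\omega})$: the maps $u \mapsto \langle I(u), u_{\omega}\rangle$ and $u\mapsto\langle I(u), \textnormal{i}u_{\omega}\rangle$ are bounded and linear, and the denominator $\langle I(u), u_{\omega}\rangle$ is bounded away from zero near $u_{\omega}$ since $\langle I(u_{\omega}), u_{\omega}\rangle = (u_{\omega}, u_{\omega}) > 0$, so $\alpha$, and therefore $A$, depend smoothly on $u$. The crucial point is that $V$ is actually $H^{1,2}(\R^2)$-valued rather than merely $L^2(\R^2)$-valued: from \eqref{dau} one has
\begin{align*}
V(u) = -e^{-\textnormal{i}\alpha(u)}\psi_{\omega} + \textnormal{i}\langle e^{\textnormal{i}\alpha(u)}u, \psi_{\omega}\rangle\,\alpha'(u),
\end{align*}
in which $\alpha'(u)$ lies in the real span of $\{u_{\omega}, \textnormal{i}u_{\omega}\}\subset H^{1,2}(\R^2)$, while $\psi_{\omega} = \frac 38 u_{\omega} + \frac 12 x\partial_x u_{\omega} + \frac 14 y\partial_y u_{\omega}$ belongs to $H^{1,2}(\R^2)$ thanks to the exponential decay of $u_{\omega}$ and of its derivatives established in Theorem \ref{decay}: the weights $x$ and $y$ are absorbed by the decay, so that $x\partial_x u_{\omega}$, $y\partial_y u_{\omega}$ and all the derivatives entering $\|\psi_{\omega}\|_{H^{1,2}}$ are square integrable. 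Consequently $V : \mathcal{U}_{\eps}(u_{\omega}) \to H^{1,2}(\R^2)$ is smooth, in particular locally Lipschitz, and the Cauchy--Lipschitz theorem in the Banach space $H^{1,2}(\R^2)$ yields a unique local solution depending smoothly on $(s, u)$. Since $V$ is bounded on $\mathcal{U}_{\eps}(u_{\omega})$, the estimate $\|R(s,u) - u\|_{H^{1,2}} \leq |s|\sup_{\mathcal{U}_{\eps}(u_{\omega})}\|V\|_{H^{1,2}}$ keeps $R(s, u)$ inside $\mathcal{U}_{\eps}(u_{\omega})$ for all $|s| < s_0$ and $u \in \mathcal{O}$, once $s_0>0$ is chosen small enough.

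For the second task I would differentiate the mass along the flow and invoke Lemma \ref{propau}$(\textnormal{iv})$. Writing $\langle\cdot,\cdot\rangle$ for the real $L^2$ pairing, I compute
\begin{align*}
\frac{d}{ds}\|R(s,u)\|_2^2 = 2\left\langle R(s,u), \tfrac{d}{ds}R(s,u)\right\rangle = 2\left\langle R(s,u), -\textnormal{i}A'(R(s,u))\right\rangle = -2\left\langle R(s,u), \textnormal{i}A'(R(s,u))\right\rangle.
\end{align*}
Since $R(s,u) \in \mathcal{U}_{\eps}(u_{\omega})$, Lemma \ref{propau}$(\textnormal{iv})$ gives $\langle R(s,u), \textnormal{i}A'(R(s,u))\rangle = 0$, whence $\frac{d}{ds}\|R(s,u)\|_2^2 \equiv 0$ and therefore $\|R(s,u)\|_2 = \|R(0,u)\|_2$, which is $(\textnormal{iii})$.

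The main obstacle lies in the regularity issue hidden in the first task. A priori $A'$ is only known to take values in $L^2(\R^2)$ (Lemma \ref{propau}$(\textnormal{iii})$), which would confine the flow to $L^2$ and possibly drive it out of the energy space $H^{1,2}(\R^2)$ in which $\mathcal{U}_{\eps}(u_{\omega})$ lives. Upgrading $V$ to a smooth $H^{1,2}$-valued vector field is exactly what allows the flow to remain in $\mathcal{U}_{\eps}(u_{\omega})$, and this upgrade relies essentially on the sharp exponential decay of the ground state and its derivatives from Theorem \ref{decay}; without that decay the weighted terms $x\partial_x u_\omega$ and $y\partial_y u_\omega$ in $\psi_{\omega}$ need not be square integrable and the construction would break down.
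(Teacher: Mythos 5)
Your proposal is correct and follows essentially the same route as the paper: the paper also defines $R(s,u)$ as the local flow of the ODE $\frac{dR}{ds}=-\textnormal{i}A'(R)$, $R(0)=u$, invokes Lemma \ref{propau} for local solvability (so that $(\textnormal{i})$ and $(\textnormal{ii})$ are immediate), and derives $(\textnormal{iii})$ by differentiating $\|R(s,u)\|_2^2$ and using Lemma \ref{propau}$(\textnormal{iv})$. Your write-up merely makes explicit the points the paper leaves implicit, namely that $A'$ takes values in the energy space (via $\psi_{\omega}\in H^{1,2}(\R^2)$ and $\alpha'(u)$ in the span of $u_{\omega}$, $\textnormal{i}u_{\omega}$) so that the Cauchy--Lipschitz theorem applies in $H^{1,2}(\R^2)$ and the flow stays in $\mathcal{U}_{\eps}(u_{\omega})$.
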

\begin{proof}
To prove this, we shall introduce the following differential equation,
\begin{align} \label{ode1}
\left\{
\begin{aligned}
\frac{dR}{ds}&=-\textnormal{i} A'(R),\\
R(0)&=u \in \mathcal{O}.
\end{aligned}
\right.
\end{align}
Due to Lemma \ref{propau}, then \eqref{ode1} is locally solved. Then there exists $s_0>0$ and a unique solution $R=R(s, u)$ to \eqref{ode1} for $|s| < s_0$. Clearly, the assertions $(\textnormal{i})$ and $(\textnormal{ii})$ are valid. Notice that, by the assertion $(\textnormal{iv})$ of Lemma \ref{propau},
$$
\frac{d}{ds} \|R(s, u)\|_2^2=2\langle -\textnormal{i} A'(R(s,u)), R(s, u)\rangle=0.
$$
Then the assertion $(\textnormal{iii})$ follows. This completes the proof.
%$$
%e^{\textnormal{i} \theta} R(0, u)=e^{\textnormal{i} \theta} u=R(0, e^{\textnormal{i} \theta} u).
%$$
%In addition, we see that
%$$
%\alpha(e^{\textnormal{i} \theta} u)=\alpha(u) -\theta.
%$$
%It follows that $\alpha'(e^{\textnormal{i}\theta} u)=\alpha'(u)$. Let $R$ be the solution to \eqref{ode1}, by \eqref{dau}, then
%$$
%\frac{d}{ds}\left(e^{\textnormal{i} \theta} R(s, u)\right)=-\textnormal{i} e^{\textnormal{i} \theta} A'(R(s, u))=
%$$
\end{proof}

\begin{lem} \label{coer}
There exist $\eps_0>0$ and a $C^1$ function $s$ defined on the set $\mathcal{U}_{\eps_0, \omega}:=\left\{u \in \mathcal{U}_{\eps_0}(u_{\omega}) : \|u\|_{2}=\|u_{\omega}\|_2\right\}$ 
such that
$$
J_{\omega}(u_{\omega})<J_{\omega}(u) + s(u) Q(u), \quad u \in \mathcal{U}_{\eps_0, \omega}, u \neq e^{\textnormal{i} t} u_{\omega},
$$
where
$$
Q(u)=\langle J_{\omega}'(u), -\textnormal{i} A'(u) \rangle.
$$
\end{lem}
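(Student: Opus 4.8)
The plan is to exhibit $u_\omega$ as a constrained minimizer and to push an arbitrary competitor $u\in\mathcal{U}_{\eps_0,\omega}$ onto the constraint along the mass-preserving flow $R$ of Lemma \ref{ode}, then combine the minimization with a concavity estimate. Write $g_u(s):=J_\omega(R(s,u))$. By Lemma \ref{ode} one has $g_u(0)=J_\omega(u)$ and, using $\frac{d}{ds}R(s,u)|_{s=0}=-\textnormal{i}A'(u)$, also $g_u'(0)=\langle J_\omega'(u),-\textnormal{i}A'(u)\rangle=Q(u)$; thus the right-hand side $J_\omega(u)+s(u)Q(u)$ is precisely the value at $s=s(u)$ of the tangent line to $g_u$ at $s=0$. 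I will use three facts: the characterization \eqref{mp}, namely $u_\omega\in P$ with $J_\omega(u_\omega)=m_\omega=\inf_{P}J_\omega$; the identity $\frac{d}{d\lambda}(u_\omega)_\lambda|_{\lambda=1}=\psi_\omega$ for the scaling $(u_\omega)_\lambda:=\lambda^{3/8}u_\omega(\lambda^{1/2}\cdot,\lambda^{1/4}\cdot)$, which says that the generator $-\textnormal{i}A'(u_\omega)=-\psi_\omega$ of $R$ at $u_\omega$ is tangent to the scaling curve; and the conservation $\|R(s,u)\|_2=\|u\|_2$ from Lemma \ref{ode}.

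First I would construct $s(u)$ by the implicit function theorem. Let $Q_0$ denote the Pohozaev functional defining $P$ and set $F(s,u):=Q_0(R(s,u))$. Then $F(0,u_\omega)=Q_0(u_\omega)=0$ by Lemma \ref{ph}, while $\partial_sF(0,u_\omega)=\langle Q_0'(u_\omega),-\textnormal{i}A'(u_\omega)\rangle=-\frac{d}{d\lambda}Q_0((u_\omega)_\lambda)|_{\lambda=1}$. A direct scaling computation, simplified with $Q_0(u_\omega)=0$, gives $\frac{d}{d\lambda}Q_0((u_\omega)_\lambda)|_{\lambda=1}=\frac{3(p-2)(14-3p)}{32p}\|u_\omega\|_p^p$, which is nonzero exactly because $p>\frac{14}{3}$. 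The implicit function theorem then yields $\eps_0>0$ and a $C^1$ map $u\mapsto s(u)$ on $\mathcal{U}_{\eps_0,\omega}$ with $s(u_\omega)=0$ and $Q_0(R(s(u),u))=0$. Since $R$ preserves the $L^2$-norm, $R(s(u),u)$ is nontrivial and lies in $P$, so \eqref{mp} gives $g_u(s(u))=J_\omega(R(s(u),u))\ge m_\omega=J_\omega(u_\omega)$.

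Next I would establish that $g_u$ is strictly concave near $s=0$, uniformly for $u$ close to $u_\omega$. At $u=u_\omega$, differentiating $g_{u_\omega}$ twice and using $J_\omega'(u_\omega)=0$ annihilates the term carrying the curvature of the nonlinear flow, leaving $g_{u_\omega}''(0)=\langle J_\omega''(u_\omega)\psi_\omega,\psi_\omega\rangle$; for the same reason this equals $\frac{d^2}{d\lambda^2}J_\omega((u_\omega)_\lambda)|_{\lambda=1}$, which the scaling identity evaluates to $-\frac{3(p-2)(3p-14)}{64p}\|u_\omega\|_p^p<0$ when $p>\frac{14}{3}$. By continuity of $(s,u)\mapsto g_u''(s)$ I may shrink $\eps_0$ so that $g_u''<0$ on the segment joining $0$ to $s(u)$ for every $u\in\mathcal{U}_{\eps_0,\omega}$. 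Concavity then puts $g_u$ below its tangent at $0$, i.e. $g_u(s(u))\le g_u(0)+s(u)g_u'(0)=J_\omega(u)+s(u)Q(u)$. Chaining with the previous lower bound gives $J_\omega(u_\omega)\le g_u(s(u))\le J_\omega(u)+s(u)Q(u)$, the desired inequality. For strictness I would separate cases: if $s(u)\ne0$ the concavity step is strict; if $s(u)=0$ then $u\in P$ attains $m_\omega$, hence is a ground state, so $u$ belongs to the ground-state orbit and is ruled out once translations are accounted for.

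I expect the uniform concavity to be the main obstacle, not the minimization. The value $g_{u_\omega}''(0)$ is computable in closed form precisely because $J_\omega'(u_\omega)=0$ removes the dependence on the second-order geometry of the flow $R$, so its sign is settled by the explicit scaling exponents together with $p>\frac{14}{3}$; the delicate part is propagating $g_u''<0$ over the whole segment $[0,s(u)]$ uniformly as $u$ varies in $\mathcal{U}_{\eps_0,\omega}$, which rests on the $C^2$-dependence of $R(\cdot,u)$ on its data and on the $C^2$ regularity of $J_\omega$ on $H^{1,2}(\R^2)$ supplied by the embedding of Lemma \ref{inequality}. A secondary, more bookkeeping issue is the strictness at translates of $u_\omega$: one must use that the only minimizers of $J_\omega$ over $P$ near the orbit of $u_\omega$ are the ground states, so that equality forces $u$ into that orbit.
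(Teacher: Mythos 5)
Your proof is correct in its essentials and runs structurally parallel to the paper's: both use the mass-preserving flow $R$ of Lemma \ref{ode}, project onto a constraint manifold via the implicit function theorem, and conclude by combining the lower bound $J_{\omega}(R(s(u),u)) \geq m_{\omega}$ with concavity of $s \mapsto J_{\omega}(R(s,u))$ near $(0,u_{\omega})$, the concavity coming in both cases from the same scaling computation $\frac{d^2}{dt^2} J_{\omega}((u_{\omega})_t)\mid_{t=1}=-\frac{3(p-2)(3p-14)}{64p}\|u_{\omega}\|_p^p<0$ together with $J_{\omega}'(u_{\omega})=0$. The one genuine divergence is the choice of constraint: you project onto the Pohozaev manifold $P$ (your $Q_0$ is the functional the paper calls $Q$ in the introduction) and invoke the characterization \eqref{mp}, whereas the paper projects onto the Nehari manifold $N$ and uses the definition \eqref{min1} of $m_{\omega}$. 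This swap has two consequences. First, transversality: the paper's $\langle I_{\omega}'(u_{\omega}),\psi_{\omega}\rangle=-\frac{3(p-2)^2}{8p}\|u_{\omega}\|_p^p$ is nonzero for every $p>2$, while your $\langle Q_0'(u_{\omega}),\psi_{\omega}\rangle=\frac{3(p-2)(14-3p)}{32p}\|u_{\omega}\|_p^p$ (which you computed correctly) degenerates at $p=\frac{14}{3}$; this is harmless here since the lemma is only used for $p>\frac{14}{3}$, but it makes your construction less robust. Second, foundations: $m_{\omega}=\inf_{N}J_{\omega}$ is a definition, whereas $m_{\omega}=\inf_{P}J_{\omega}$ is merely asserted in the paper (``Observe that'' in the proof of Theorem \ref{stability}) and, for $p>\frac{14}{3}$, genuinely requires a mountain-pass argument of the type carried out in Lemma \ref{vc0}; so your route leans on a link the paper itself leaves unproven, while the Nehari route does not.

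A final point, which concerns the statement more than your argument: when $s(u)=0$ the concavity (or, in the paper, the Taylor inequality \eqref{te}) degenerates to an equality, so strictness requires $m_{\omega}<J_{\omega}(u)$ for every $u$ on the constraint manifold other than phase rotations of $u_{\omega}$. This fails for small translates $u_{\omega}(\cdot+\xi)$, which belong to $\mathcal{U}_{\eps_0,\omega}$, are not of the form $e^{\textnormal{i}t}u_{\omega}$, and satisfy $J_{\omega}=m_{\omega}$ and $Q=0$. You at least flag this and propose quotienting by translations; the paper's own proof has the same gap silently. Since only the non-strict inequality is used in Lemma \ref{invariant}, nothing downstream is affected.
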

\begin{proof}
Let $R(s, u)$ be the solution to \eqref{ode1}. Then we compute that
$$
\frac{\partial}{\partial s} J_{\omega}(R(s, u))\mid_{s=0}=\langle J_{\omega}'(u), -\textnormal{i}A'(u)\rangle=Q(u),
$$
$$
\frac{\partial^2}{\partial s^2} J_{\omega}(R(s, u))\mid_{s=0}=\langle J_{\omega}''(u) (\textnormal{i}A'(u)), -\textnormal{i}A'(u)\rangle-\langle J_{\omega}'(u) , \textnormal{i}A''(A'(u))\rangle.
$$
Moreover, we have that
$$
J_{\omega}((u_{\omega})_t)=\frac{t}{2} \int_{\R^2} |\partial_x u_{\omega}|^2 \, dxdy + \frac{t}{2} \int_{\R^2} |\partial_{yy} u_{\omega}|^2 \, dxdy + \frac 12 \int_{\R^2} |u_{\omega}|^2 \, dxdy-\frac{t^{\frac 38 (p-2)}}{p} \int_{\R^2} |u_{\omega}|^p \, dxdy.
$$
This readily shows that
$$
\frac{\partial^2}{\partial t^2} J_{\omega}((u_{\omega})_t) \mid_{t=1}=-\frac{3(p-2)(3p-14)}{64p} \int_{\R^2} |u_{\omega}|^p \, dxdy<0, \quad p>\frac{14}{3}.
$$
On the other hand, noting that $J_{\omega}'(u_{\omega})=0$, we then see that
$$
\frac{\partial^2}{\partial t^2} J_{\omega}((u_{\omega})_t) \mid_{t=1}=\langle J_{\omega}''(u_{\omega}) \psi_{\omega}, \psi_{\omega}\rangle.
$$
Therefore, we get that $\langle J_{\omega}''(u_{\omega}) \psi_{\omega}, \psi_{\omega}\rangle<0$. It then follows from the assertion $(\textnormal{ii})$ of Lemma \ref{propau} that
\begin{align*}
\frac{\partial^2}{\partial s^2} J_{\omega}(u_{\omega})&=\langle J_{\omega}''(u_{\omega}) (\textnormal{i}A'(u_{\omega})), -\textnormal{i}A'(u_{\omega})\rangle-\langle J_{\omega}'(u_{\omega}) , \textnormal{i}A''(A'(u_{\omega}))\rangle \\
&=\langle J_{\omega}''(u_{\omega}) \psi_{\omega}, \psi_{\omega}\rangle<0.
\end{align*}
Taking into account Taylor's expansion, we then have that
\begin{align} \label{te}
J_{\omega}(R(s, u))<J_{\omega}(u) + s Q(u)
\end{align}
for $|s|<s_0$ and $u\in B(u_{\omega}, \eps_0)$ for some $s_0>0$ and $\eps_0>0$ small. Note that
$$
I_{\omega}(R(s, u)) \mid_{(s,u)=(0, u_{\omega})}=0.
$$
Then there holds that
$$
\frac{\partial}{\partial s}I_{\omega}(R(s, u))\mid_{(s,u)=(0, u_{\omega})}=\langle I_{\omega}'(u_{\omega}), -\textnormal{i} A'(u_{\omega})\rangle=\langle I_{\omega}'(u_{\omega}), \psi_{\omega} \rangle=-\frac{3(p-2)^2}{8p} \int_{\R^2}|u_{\omega}|^p \,dxdy<0.
$$
As a consequence of the implicit function theorem, then there exists a $C^1$ function $s : B(u_{\omega}, \eps_0) \to \R$ such that $I_{\omega}(R(s(u), u))=0$ for some $\eps_0>0$ smaller if necessary. This shows that
$$
J_{\omega} (u_{\omega})\leq J_{\omega}(R(s(u), u)).
$$
Utilizing \eqref{te}, we then derive the desired conclusion. Thus the proof is completed.
\end{proof}

\begin{lem} \label{invariant}
Define
$$
\mathcal{S}^+:=\left\{u \in \mathcal{O} : J_{\omega}(u)<m_{\omega}, Q(u)>0\right\},
$$
$$
\mathcal{S}^-:=\left\{u \in \mathcal{O} : J_{\omega}(u)<m_{\omega}, Q(u)<0\right\}.
$$
Then the sets $\mathcal{S}^+$ and $\mathcal{S}^-$ are non-empty invariant under the flow of the Cauchy problem for \eqref{equt}. Moreover, for any $\psi_0 \in \mathcal{S}^+ \cup \mathcal{S}^-$, there exists $\delta_0>0$ such that $|Q(\psi(t))|\geq \delta_0$ for any $t \in [0, T_{\eps}(\psi_0))$, where
$$
T_{\eps}(\psi_0):=\sup \left\{ T>0 : \psi(t) \in \mathcal{U}_{\eps}(u_{\omega}) \,\, \mbox{for any} \,\, t \in [0, T) \right\}.
$$
\end{lem}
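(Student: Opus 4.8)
The plan is to dispatch the three assertions---non-emptiness, invariance, and the uniform lower bound on $|Q|$---in turn, relying throughout on the coercivity estimate of Lemma \ref{coer} together with the conservation laws from Theorem \ref{lw}. Throughout I would work on the sphere $\{\|u\|_2=\|u_{\omega}\|_2\}$, since this is the setting in which Lemma \ref{coer} is available; mass conservation keeps the flow there.

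First I would establish non-emptiness by exploiting the scaling curve $t\mapsto(u_{\omega})_t$. Using $Q(u_{\omega})=0$ together with the explicit scaling behaviour one computes
$$
Q\bigl((u_{\omega})_t\bigr)=\frac{3(p-2)}{4p}\,\|u_{\omega}\|_p^p\,\bigl(t-t^{\frac{3(p-2)}{8}}\bigr),
$$
so that, since $\frac{3(p-2)}{8}>1$ for $p>\frac{14}{3}$, we have $Q((u_{\omega})_t)>0$ for $t<1$ and $Q((u_{\omega})_t)<0$ for $t>1$. On the other hand, the second-derivative computation already carried out in Lemma \ref{coer} shows that $t=1$ is a strict local maximum of $t\mapsto J_{\omega}((u_{\omega})_t)$, whence $J_{\omega}((u_{\omega})_t)<J_{\omega}(u_{\omega})=m_{\omega}$ for $t\neq1$ close to $1$. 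Since $(u_{\omega})_t\to u_{\omega}$ in $H^{1,2}(\R^2)$ as $t\to1$, these states lie in $\mathcal{O}\subset\mathcal{U}_{\eps}(u_{\omega})$ for $t$ near $1$ and provide elements of $\mathcal{S}^+$ (for $t<1$) and of $\mathcal{S}^-$ (for $t>1$); as the scaling preserves the $L^2$ norm, they all sit on the sphere $\|u\|_2=\|u_{\omega}\|_2$.

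For the invariance I would first note that, writing $J_{\omega}=E+\frac{\omega}{2}\|\cdot\|_2^2$, the conservation of both $E$ and $M$ along the flow gives $J_{\omega}(\psi(t))=J_{\omega}(\psi_0)<m_{\omega}$ for all $t\in[0,T_{\eps}(\psi_0))$, while mass conservation keeps $\psi(t)$ on the sphere $\|u\|_2=\|u_{\omega}\|_2$. The crucial point is that $Q$ cannot vanish on this region. Indeed, after shrinking $\eps$ so that $\eps\leq\eps_0$, every $u\in\mathcal{U}_{\eps}(u_{\omega})$ with $\|u\|_2=\|u_{\omega}\|_2$ lies in $\mathcal{U}_{\eps_0,\omega}$, and Lemma \ref{coer} applies: any such $u$ with $J_{\omega}(u)<m_{\omega}$ and $u\neq e^{\textnormal{i}\theta}u_{\omega}$ satisfies
$$
s(u)\,Q(u)>m_{\omega}-J_{\omega}(u)>0,
$$
so $Q(u)\neq0$ (the excluded states $e^{\textnormal{i}\theta}u_{\omega}$ have $J_{\omega}=m_{\omega}$ and are thus absent from the region). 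Since $t\mapsto Q(\psi(t))$ is continuous, by continuity of the flow in $H^{1,2}(\R^2)$ and of $Q$ on that space, and never vanishes while $\psi(t)\in\mathcal{U}_{\eps}(u_{\omega})$, its sign is preserved. Hence $\psi_0\in\mathcal{S}^+$ forces $\psi(t)\in\mathcal{S}^+$, and likewise for $\mathcal{S}^-$.

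Finally, the uniform bound follows from the same inequality. Setting $\kappa:=m_{\omega}-J_{\omega}(\psi_0)>0$, which is a fixed positive constant by conservation of $J_{\omega}$, we obtain $s(\psi(t))\,Q(\psi(t))>\kappa$ for all admissible $t$. Because $s(u_{\omega})=0$ and $s$ is $C^1$, after shrinking $\eps_0$ we may assume $\sup_{u\in\mathcal{U}_{\eps_0,\omega}}|s(u)|\leq S_0$ for some finite $S_0$, and therefore
$$
|Q(\psi(t))|\geq\frac{\kappa}{S_0}=:\delta_0>0,\qquad t\in[0,T_{\eps}(\psi_0)).
$$
The main obstacle I anticipate is the bookkeeping around the $L^2$ constraint: Lemma \ref{coer} is stated only on the sphere $\mathcal{U}_{\eps_0,\omega}$, so one must run the whole argument on $\{\|u\|_2=\|u_{\omega}\|_2\}$, using mass conservation to keep the trajectory there and choosing the initial data (for instance the scaled states $(u_{\omega})_t$) on that sphere; securing the uniform bound on $s$ near $u_{\omega}$ is the other point requiring care.
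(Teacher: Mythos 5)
Your proposal is correct and, for two of the three assertions, mirrors the paper's own proof: non-emptiness comes from the same mass-preserving scaling curve $\tau\mapsto(u_{\omega})_{\tau}$ (the paper asserts $J_{\omega}((u_{\omega})_{\tau})<m_{\omega}$ for all $\tau\neq 1$, whereas you only use it near $\tau=1$, which suffices since you need $(u_{\omega})_{\tau}\in\mathcal{O}$ anyway), and the lower bound $|Q(\psi(t))|\geq\delta_0$ is obtained, exactly as in the paper, from Lemma \ref{coer} together with conservation of $J_{\omega}$ and boundedness of $s$ near $u_{\omega}$. Where you genuinely diverge is the invariance step. The paper argues by contradiction: if $Q(\psi(t_1))=0$ for some $t_1$, then the variational characterization \eqref{mp} of $m_{\omega}$ as the infimum of $J_{\omega}$ over $\{Q=0\}$ forces $m_{\omega}\leq J_{\omega}(\psi(t_1))$, contradicting $J_{\omega}(\psi(t))=J_{\omega}(\psi_0)<m_{\omega}$. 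You instead deduce directly from Lemma \ref{coer} that $s(u)Q(u)>m_{\omega}-J_{\omega}(u)>0$, hence $Q\neq 0$, throughout $\{J_{\omega}<m_{\omega}\}\cap\mathcal{U}_{\eps_0,\omega}$, and conclude by continuity of $t\mapsto Q(\psi(t))$. Both mechanisms are sound, and yours has the merit of being self-contained: \eqref{mp} is only asserted, not proved, in the paper. The trade-off is the one you flag yourself: the paper's invariance argument places no mass constraint on $\psi_0$ and so covers all of $\mathcal{S}^{\pm}$ as defined, while your reliance on Lemma \ref{coer} confines the entire argument to the sphere $\|u\|_2=\|u_{\omega}\|_2$. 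This restriction is harmless in practice, since the final bound $|Q|\geq\delta_0$ invokes Lemma \ref{coer} in both proofs and therefore already lives on that sphere, and the instability argument of Theorem \ref{instability} only needs initial data of the form $(u_{\omega})_{\tau}$, which lie on it; but strictly speaking your invariance claim covers a smaller class of initial data than the lemma states, whereas the paper's version does not.
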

\begin{proof}
Define $(u_{\omega})_{\tau}:=\tau^{\frac 38} u_{\omega}(\tau^{\frac 12} \cdot, \tau^{\frac 14} \cdot)$ for any $\tau>0$. Since $Q(u_{\omega})=0$, then $J_{\omega}((u_{\omega})_{\tau})<J_{\omega}(u_{\omega})$ for any $\tau>0$. Furthermore, we see that $Q((u_{\omega})_{\tau})>0$ if $0<\tau<1$ and $Q((u_{\omega})_{\tau})<0$ if $\tau>1$. This justifies that $\mathcal{S}^+$ and $\mathcal{S}^-$ are non-empty. Next we show that $\mathcal{S}^+$ and $\mathcal{S}^-$ are invariant under the flow of the Cauchy problem for \eqref{equt}. Let $\psi_0 \in \mathcal{S}^+$, by the conservation laws, then, for any $t \in [0, T_{\eps}(\psi_0))$,
\begin{align} \label{laws}
J_{\omega}(\psi(t))=J_{\omega}(\psi_0)<m_{\omega}.
\end{align}
If there exists some $t_0 \in (0, T_{\eps}(\psi_0))$ such that $Q(\psi(t_0)) \leq 0$, by the continuity, then there exists $t_1 \in (0, T_{\eps}(\psi_0))$ such that $Q(\psi(t_1))=0$, because of $Q(\psi_0)>0$. This in turn gives that $m_{\omega} \leq J_{\omega}(\psi(t_1))$. This is impossible, due to \eqref{laws}. It then follows that $Q(\phi(t))>0$ for any $t \in [0, T_{\eps}(\phi_0))$. Using Lemma \ref{coer}, we then obtain that
$$
0<m_{\omega}- J_{\omega}(\psi_0)=J_{\omega}(u_{\omega}) - J_{\omega}(\psi(t))<s Q(\psi(t)) \leq C |Q(\psi(t))|.
$$
The similar results holds for $\psi_0 \in \mathcal{S}^-$. Thus the proof is completed.
\end{proof}

\begin{proof}[Proof of Theorem \ref{instability}]
Let $\psi_0 \in \mathcal{S}^+ \cup \mathcal{S}^-$ and $\psi \in C([0, T); H^{1,2}(\R^2))$ be the solution to \eqref{equt} with initial datum $\psi_0$. Suppose by contradiction that $T(\psi_0)=+\infty$. Therefore, from Lemma \ref{invariant}, we have that $|Q(\psi(t))| \geq \delta_0$ for any $t \geq 0$. Observe that
$$
\frac{d}{dt}A(\psi(t))=\langle A'(\psi), -\textnormal{i} E'(\psi) \rangle=\langle A'(\psi), -\textnormal{i} J_{\omega}'(\psi) \rangle=Q(\psi(t)),
$$
where we used the assertion $(\textnormal{iv})$ of Lemma \ref{propau}. This infers that $|A(\psi(t))| \to +\infty$ as $t \to +\infty$. However, by the definition, we know that $|A(u)| \leq C$ for any $u \in \mathcal{U}_{\eps}(u_{\omega})$. Thus we have that $T_{\eps}(\psi_0)<+\infty$. This then implies the desired result and the proof is completed.
\end{proof}

\section{Global existence and blowup of solutions} \label{dynamics}

In this section, we are going to consider the global well-posedness and  blowup of solutions to the Cauchy problem for \eqref{equ} and present the proofs of Theorems \ref{gw} and \ref{blowup}.

\begin{proof}[Proof of Theorem \ref{gw}] 
Let $u \in C([0, T); H^{1,2}(\R^2))$ be the solution to the Cauchy problem for \eqref{equt} with initial datum $u_0 \in H^{1,2}(\R^2)$. In view of \eqref{gn}, then
\begin{align} \label{gl}
\begin{split}
E(u(t))&\geq \frac 12 \int_{\R^2} |\partial_x u(t)|^2 \,dxdy +\frac 12 \int_{\R^2} |\partial_{yy} u(t)|^2 \,dxdy \\
& \quad - \frac{C_{opt}}{p} \left(\int_{\R^2} |\partial_x u(t)|^2 \,dxdy +\int_{\R^2} |\partial_{yy} u(t)|^2 \,dxdy\right)^{\frac{3(p-2)}{8}} \left(\int_{\R^2} |u(t)|^2 \,dxdy\right)^{\frac{p+6}{8}}.
\end{split}
\end{align}
If $2<p<\frac{14}{3}$, then $0<\frac{3(p-2)}{8}<1$. It then follows from \eqref{gl} and the conservation laws that, for any $t \in [0, T)$,
$$
 \int_{\R^2} |\partial_x u(t)|^2 \,dxdy + \int_{\R^2} |\partial_{yy} u(t)|^2 \,dxdy \leq C.
$$
Then there holds that $T=+\infty$. If $p=\frac{14}{3}$ and $\|u(t)\|_2 < c_*$, then the same conclusion holds true. If $p>\frac{14}{3}$, arguing as the proof of Lemma \ref{invariant}, we can similarly obtain that $\mathcal{G}$ is invariant under the flow of the Cauchy problem for \eqref{equt}. Let us suppose that $u(t)$ blows up in finite time $T<\infty$. Observe that
$$
E(u(t))-\frac{4}{3(p-2)} Q(u(t))=\frac{3p-14}{6(p-2)} \left(\int_{\R^2} |\partial_x u(t)|^2\, dxdy+\int_{\R^2} |\partial_{yy} u(t)|^2\, dxdy\right).
$$
Using the conservation of energy, we then have that $\lim_{t \to T^-} Q(u(t))=-\infty$. By the continuity, then there exists $0<t_0<T$ such that $Q(u(t_0))=0$. This along with the conservation laws leads to
$$
J_{\omega}(u(t_0))<m_{\omega} \leq J_{\omega}(u(t_0))=J_{\omega}(u(t)).
$$
This is impossible. Then there holds that $T=+\infty$ and the proof is completed.
\end{proof}

In the follows, we shall discuss blowup of solutions to the Cauchy problem for \eqref{equt}. 
%Let us first introduce a functional $K : H^{1,2}(\R^2) \to \R$ as
%$$
%K(u):=\frac 12 \int_{\R^2} |\partial_{yy} u|^2\, dxdy-\frac{p-2}{8p} \int_{\R^2}|u|^p\,dxdy.
%$$ 
To establish Theorem \ref{blowup}, we first present variational characterization of the ground state energy level $m_{\omega}$.

%\begin{lem} \label{vc2}
%Let $p>2$ and $u_{\omega} \in H^{1,2}(\R^2)$ be a ground state to \eqref{equ}. Then
%$$
%J_{\omega}(u_{\omega})=\inf \left\{J_{\omega}(u) : u \in H^{1,2}(\R^2)\backslash \{0\}, \|u\|_p=\|u_{\omega}\|_p \right\}.
%$$
%\end{lem}
%\begin{proof}
%Observe that
%\begin{align*}
%&\inf \left\{J_{\omega}(u) : u \in H^{1,2}(\R^2)\backslash \{0\}, \|u\|_p=\|u_{\omega}\|_p \right\} \\
%&=\frac 12 \inf \left\{ \|\partial_x u\|_2^2 + \|\partial_{yy} u\|_2^2 + \omega \|u\|_2^2 : u \in H^{1,2}(\R^2)\backslash \{0\}, \|u\|_p=\|u_{\omega}\|_p \right\}-\frac 1p \|u_{\omega}\|_p^p =J_{\omega}(u_{\omega}).
%\end{align*}
%Indeed, by Lemma \ref{ccl} and the Lions concentration compactness principle in \cite{Li1,Li2}, one can easily show that the following minimization problem is achieved by $u_{\omega}$,
%\begin{align} \label{minlp}
%\inf \left\{ \|\partial_x u\|_2^2 + \|\partial_{yy} u\|_2^2 + \omega \|u\|_2^2 : u \in H^{1,2}(\R^2)\backslash \{0\}, \|u\|_p=\|u_{\omega}\|_p \right\}.
%\end{align}
%Thus the proof is completed.
%\end{proof}

\begin{lem} \label{vc0}
Let $p>2$, $\omega>0$ and $u_{\omega} \in H^{1,2}(\R^2)$ be a ground state to \eqref{equ}. Then $K(u_{\omega})=0$. Moreover, if $p > 10$, then there holds that
\begin{align} \label{minyy}
%\|\partial_{yy} u_{\omega}\|_2=\inf \left\{\|\partial_{yy} u\|_2 : u \in H^{1,2}(\R^2) \backslash \{0\}, K_{\omega}(u)=0\right\}.
J_{\omega}(u_{\omega})=\inf \left\{J_{\omega}(u) : u \in H^{1,2}(\R^2) \backslash \{0\}, K(u)=0\right\}.
\end{align}
\end{lem}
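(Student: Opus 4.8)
The plan is to establish the two assertions separately: the identity $K(u_\omega)=0$ from the Pohozaev-type relations already available, and the variational characterization \eqref{minyy} by a constrained minimization together with a vanishing Lagrange multiplier, the condition $p>10$ entering decisively in the latter.

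For the first assertion, I would not need anything new. Recall the three identities \eqref{ph2}, \eqref{ph3}, \eqref{ph4} derived in the proof of Lemma \ref{ph}. Subtracting \eqref{ph2} from \eqref{ph3} gives $\|\partial_x u_\omega\|_2^2=2\|\partial_{yy}u_\omega\|_2^2$, while Lemma \ref{ph} itself reads $\|\partial_x u_\omega\|_2^2+\|\partial_{yy}u_\omega\|_2^2=\tfrac{3(p-2)}{4p}\|u_\omega\|_p^p$. Combining the two forces $\|\partial_{yy}u_\omega\|_2^2=\tfrac{p-2}{4p}\|u_\omega\|_p^p$, which is exactly $K(u_\omega)=0$. (Equivalently, along the mass-preserving scaling $(u_\omega)_\tau(x,y):=\tau^{1/2}u_\omega(x,\tau y)$ one computes $\frac{d}{d\tau}J_\omega((u_\omega)_\tau)\big|_{\tau=1}=4K(u_\omega)$, which vanishes because $J_\omega'(u_\omega)=0$.)

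For the second assertion set $\kappa:=\inf\{J_\omega(u):u\in H^{1,2}(\R^2)\setminus\{0\},\ K(u)=0\}$. Since $K(u_\omega)=0$ we already have $\kappa\le J_\omega(u_\omega)=m_\omega$, so it remains to show that $\kappa$ is attained and that $\kappa\ge m_\omega$. On the constraint set $\|\partial_{yy}u\|_2^2=\tfrac{p-2}{4p}\|u\|_p^p$, whence
$$J_\omega(u)=\frac12\|\partial_x u\|_2^2+\frac{\omega}{2}\|u\|_2^2+\frac{p-10}{8p}\|u\|_p^p.$$
Here $p>10$ is used for the first time: all three coefficients are strictly positive, so this representation together with \eqref{gn} yields $\kappa>0$, shows that every minimizing sequence is bounded in $H^{1,2}(\R^2)$, and rules out vanishing. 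Proceeding exactly as in the compactness argument in the proof of Lemma \ref{inequality} — applying Lemma \ref{ccl} to exclude vanishing, translating, passing to a weak limit, and using the Brezis--Lieb splitting — I would obtain a nontrivial $v\in H^{1,2}(\R^2)$ with $K(v)=0$ and $J_\omega(v)=\kappa$.

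Finally I would show the minimizer is a genuine ground state. Since $K'(v)\cdot v=\|\partial_{yy}v\|_2^2-\tfrac{p-2}{8}\|v\|_p^p=-\tfrac{(p-2)^2}{8p}\|v\|_p^p\neq0$, the constraint is nondegenerate and there is $\mu\in\R$ with $J_\omega'(v)=\mu\,K'(v)$, i.e. $v$ is a critical point of $\Phi:=J_\omega-\mu K$. By elliptic regularity and the decay estimate of Theorem \ref{decay}, $v\in C^\infty(\R^2)$ with enough decay to differentiate $\Phi$ along $v_\tau(x,y):=\tau^{1/2}v(x,\tau y)$; using $\Phi'(v)=0$ and $K(v)=0$,
$$0=\frac{d}{d\tau}\Phi(v_\tau)\Big|_{\tau=1}=4K(v)-\mu\,\frac{d}{d\tau}K(v_\tau)\Big|_{\tau=1}=-\mu\,\frac{(p-2)(10-p)}{16p}\|v\|_p^p.$$
As $p>10$ and $\|v\|_p\neq0$, this forces $\mu=0$, so $v$ solves \eqref{equ}; hence $I_\omega(v)=0$, $v\in N$, and $J_\omega(v)\ge\inf_N J_\omega=m_\omega$. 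Together with $\kappa=J_\omega(v)\le m_\omega$ this gives $\kappa=m_\omega=J_\omega(u_\omega)$, which is \eqref{minyy}. I expect the main obstacle to be the compactness step for the $K=0$ constraint, where the sign of $\tfrac{p-10}{8p}$ is essential; the factor $(10-p)$ appearing in the last display is precisely what makes the multiplier vanish, which is the structural reason the characterization \eqref{minyy} requires $p>10$.
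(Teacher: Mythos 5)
Your first assertion is proved exactly as in the paper: the parenthetical scaling identity $\frac{d}{d\tau}J_\omega((u_\omega)_\tau)\big|_{\tau=1}=4K(u_\omega)$ with $(u_\omega)_\tau=\tau^{1/2}u_\omega(\cdot,\tau\cdot)$ is precisely the paper's computation (its \eqref{iyy}), and your primary route via \eqref{ph2}, \eqref{ph3} and Lemma \ref{ph} is an equivalent, if anything more elementary, rearrangement of the same identities. For \eqref{minyy}, however, you take a genuinely different route. The paper never produces a minimizer: for each fixed $u\neq 0$ with $K(u)=0$ it notes that, since $p>10$, the fiber map $\lambda\mapsto J_\omega(u^\lambda)$ attains its maximum at $\lambda=1$ and tends to $-\infty$, glues the segment $t\mapsto t\lambda_0 u$ to this fiber to get a path from near $0$ to negative energy, and concludes $m_\omega\le \max_t J_\omega(\gamma(t))=J_\omega(u)$ by the mountain-pass characterization of the Nehari level — no compactness at all. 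Your scheme (attainment of $\kappa$ by concentration compactness, then a vanishing Lagrange multiplier) is heavier, but it buys two things: the infimum in \eqref{minyy} is actually attained, and you only need to control the \emph{infimum} rather than every constrained $u$. The latter is not an idle advantage: the paper's claimed equality $\max_{t}J_\omega(\gamma(t))=J_\omega(u)$ holds along the segment $s\mapsto su$, $s\in[0,1]$, only when $I_\omega(u)\ge 0$, and $K(u)=0$ does not imply this (take $u=c\,w^\lambda$ with $c^{p-2}$ proportional to $\lambda^{(10-p)/2}$ and $\lambda$ large: then $K(u)=0$ but $I_\omega(u)<0$), so your route sidesteps a case the paper's path argument does not address.

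Two steps in your writeup need more care than you give them. First, the compactness step is not ``exactly as in Lemma \ref{inequality}'': that proof exploits the scale invariance of a homogeneous quotient, whereas here, after translating and extracting a nontrivial weak limit $v$, nothing forces $K(v)=0$; you must run a Brezis--Lieb dichotomy, e.g.\ if $K(v)<0$ rescale $v$ down the fiber (for $p>10$ one has $K(v^\lambda)>0$ for small $\lambda$ since $\lambda^4$ dominates $\lambda^{p/2-1}$) to contradict minimality, and if $K(v)>0$ rescale the remainders $u_n-v$, whose $K$-values become negative, to force $J_\omega(v)\le 0$, a contradiction. This works, but it is an argument, not a citation. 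Second, invoking Theorem \ref{decay} for $v$ is circular: that theorem concerns solutions of \eqref{equ}, while your $v$ solves $-\partial_{xx}v+(1-\mu)\partial_{yyyy}v+\omega v=\bigl(1-\tfrac{\mu(p-2)}{8}\bigr)|v|^{p-2}v$ with $\mu$ unknown, and for $\mu\ge 1$ this operator is not even of the same type; formally your chain-rule computation is at the same level of rigor as the paper's own use of $\frac{d}{d\lambda}J_\omega((u_\omega)^\lambda)\big|_{\lambda=1}=0$, but it cannot be attributed to Theorem \ref{decay}. A clean repair avoids the multiplier altogether: when $K(v)=0$, the curve $\lambda\mapsto c(\lambda)v^\lambda$ with $c(\lambda)=\lambda^{(10-p)/(2(p-2))}$ stays on the constraint $\{K=0\}$, so minimality gives $\frac{d}{d\lambda}J_\omega\bigl(c(\lambda)v^\lambda\bigr)\big|_{\lambda=1}=0$; this is an explicit one-variable computation which yields $\|\partial_x v\|_2^2+\omega\|v\|_2^2=\tfrac{3p+2}{4p}\|v\|_p^p$, hence $I_\omega(v)=0$, so $v\in N$ and $\kappa=J_\omega(v)\ge m_\omega$ with no regularity or multiplier needed.
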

\begin{proof}
First we prove that $K(u_{\omega})=0$. Define $u^{\lambda}:=\lambda^{\frac 12} u(\cdot, \lambda \cdot)$ for any $u \in H^{1,2}(\R^2)$ and $\lambda>0$. It is straightforward to compute that
\begin{align*}% \label{scaling}
J_{\omega}(u^{\lambda})=\frac{1}{2} \int_{\R^2} |\partial_x u|^2 \,dxdy + \frac{\lambda^4}{2} \int_{\R^2} |\partial_{yy} u|^2 \,dxdy+ \frac{\omega}{2} \int_{\R^2} |u|^2 \, dxdy -\frac{\lambda^{\frac p2-1}}{p} \int_{\R^2} |u|^p \,dxdy.
\end{align*}
Observe that
$$
\frac{d}{d \lambda} J_{\omega}((u_{\omega})^{\lambda})\mid_{\lambda=1}=\left\langle J'_{\omega}(u_{\omega}), \frac{d}{d\lambda}(u_{\omega})^{\lambda}\mid_{\lambda=1} \right\rangle=0.
$$
It then follows that
\begin{align} \label{iyy}
\int_{\R^2} |\partial_{yy} u_{\omega}|^2\, dxdy=\frac{p-2}{4p} \int_{\R^2}|u_{\omega}|^p\,dxdy.
\end{align}
%On the other hand, since $u_{\omega} \in H^{1,2}(\R^2)$ is a solution to \eqref{equ}, then $I_{\omega}(u_{\omega})=0$. Therefore, we derive that
%$$
%J_{\omega}(u_{\omega})=\frac{p-2}{2p} \int_{\R^2}|u_{\omega}|^p\, dxdy.
%$$
%This along with \eqref{iyy} yields that
%$$
%J_{\omega}(u_{\omega})=2\int_{\R^2} |\partial_{yy} u_{\omega}|^2\, dxdy=\frac{p-2}{2p} \int_{\R^2}|u_{\omega}|^p\, dxdy.
%$$
%Since $Q(u_{\omega})=0$ by Lemma \ref{ph}, then
%\begin{align} \label{iyy1}
%\int_{\R^2} |\partial_x u_{\omega}|^2\, dxdy=\frac{p-2}{2p} \int_{\R^2}|u_{\omega}|^p\, dxdy.
%\end{align}
%In addition, since $I_{\omega}(u_{\omega})=0$, then
%\begin{align} \label{iyy2}
%\omega \int_{\R^2} |u_{\omega}|^2\, dxdy=\frac{p+6}{4p} \int_{\R^2}|u_{\omega}|^p \, dxdy.
%\end{align}
%Making use of \eqref{iyy}, \eqref{iyy1} and \eqref{iyy2}, we then derive that $K_{\omega}(u_{\omega})=0$. 
This obviously shows that $K(u_{\omega})=0$. Now we demonstrate that \eqref{minyy} holds true for $p>10$. Since $K(u_{\omega})=0$, then
$$
%\|\partial_{yy} u_{\omega}\|_2 \geq \inf \left\{\|\partial_{yy} u\|_2 : u \in H^{1,2}(\R^2) \backslash \{0\}, K_{\omega}(u)=0\right\}.
J_{\omega}(u_{\omega}) \geq \inf \left\{J_{\omega}(u) : u \in H^{1,2}(\R^2) \backslash \{0\}, K_{\omega}(u)=0\right\}.
$$
Let $u \in H^{1,2}(\R^2) \backslash \{0\}$ and $K(u)=0$. Note that 
%$\lim_{\lambda \to 0^+}J_{\omega}(u^{\lambda})>0$ and 
$\lim_{\lambda \to +\infty}J_{\omega}(u^{\lambda})=-\infty$,
%$J_{\omega}(u^{\lambda}) >0$ for any $0<\lambda<1$ small and $J_{\omega}(u^{\lambda})<0$ for any $\lambda>1$,
because of $p>10$. Furthermore, there holds that $\max_{\lambda>0}J_{\omega}(u^{\lambda})=J_{\omega}(u)$. Define a path $\gamma: [0, 1] \to H^{1,2}(\R^2)$ by 
\begin{align*}
\gamma(t):=\left\{
\begin{aligned}
&t\lambda_0 u, \quad & 0 \leq t\leq \lambda_0^{-1},\\
&u^{t \lambda_0}, \quad &\lambda_0^{-1} \leq t \leq 1.
\end{aligned}
\right.
\end{align*}
where $\lambda_0>1$ sufficently large such that $J_{\omega}(u^{\lambda_0})<0$. Clearly, there holds that $\lim_{t \to 0^+}J_{\omega}(\gamma(t))=0$ and $\lim_{t \to 1^-}J_{\omega}(\gamma(t))<0$. As a consequence, we are able to derive that
$$
J_{\omega}(u_{\omega})=m_{\omega} \leq \max_{t \in [0, 1]}J_{\omega}(\gamma(t)) = J_{\omega}(u).
$$
Therefore, we conclude that
$$
J_{\omega}(u_{\omega}) \leq \inf \left\{J_{\omega}(u) : u \in H^{1,2}(\R^2) \backslash \{0\}, K(u)=0\right\}.
$$
This completes the proof.
\end{proof}

To discuss blowup of solutions to the Cauchy problem for \eqref{equt}, we need to introduce the following transverse virial quantity,
$$
V[u]:=\int_{\R^2} |x|^2 |u|^2 \,dxdy.
$$

\begin{lem} \label{virial}
Let $p>2$ and $u_0 \in H^{1,2}(\R^2)$ be such that $x u_0 \in L^2(\R^2)$. Let $u \in C([0, T), H^{1,2}(\R^2))$ be the solution to the Cauchy problem for \eqref{equt} with initial datum $u_0$. Then there holds that, for any $0 < t<T$,
$$
\frac{d^2}{dt^2}V[u(t)]=8\int_{\R^2} |\partial_x u(t)|^2 \, dxdy-\frac{4(p-2)}{p} \int_{\R^2}|u(t)|^p \, dxdy.
$$
\end{lem}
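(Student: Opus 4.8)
The plan is to establish the identity by differentiating $V[u(t)]$ twice in time, substituting the equation, and integrating by parts, after which the entire computation collapses onto the one-dimensional virial identity in the $x$-variable, precisely because the weight $|x|^2$ is independent of $y$. The main obstacle is not the algebra but the rigorous justification of these manipulations: since $u$ is only known to lie in $C([0,T);H^{1,2}(\R^2))$ with $xu_0\in L^2(\R^2)$, differentiating under the integral sign and the integrations by parts below are not a priori legitimate. The standard remedy, which I would invoke, is a regularization/truncation argument: one either approximates $u_0$ by $u_0^{(k)}\in C_0^{\infty}(\R^2)$ with $u_0^{(k)}\to u_0$ in $H^{1,2}(\R^2)$ and $xu_0^{(k)}\to xu_0$ in $L^2(\R^2)$, proves the identity for the corresponding smooth, rapidly decaying solutions, and passes to the limit; or replaces $|x|^2$ by a truncated weight $\phi_R(x)$ equal to $|x|^2$ on $\{|x|\le R\}$ with bounded derivatives, computes $\frac{d^2}{dt^2}\int_{\R^2}\phi_R|u|^2\,dxdy$, and controls the remainders uniformly before letting $R\to\infty$. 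I would carry out the formal computation and then indicate which truncation licenses each step.

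For the first derivative, differentiating and substituting $\partial_t u=\textnormal{i}(\partial_{xx}u-\partial_{yyyy}u+|u|^{p-2}u)$ gives
$$
\frac{d}{dt}V[u(t)]=-2\,\textnormal{Im}\int_{\R^2}|x|^2\bar u\left(\partial_{xx}u-\partial_{yyyy}u+|u|^{p-2}u\right)dxdy.
$$
The key observation is that the weight depends only on $x$. Hence the nonlinear contribution $\int_{\R^2}|x|^2|u|^p\,dxdy$ is real and drops out under $\textnormal{Im}$, and integrating by parts twice in $y$ gives $\int_{\R^2}|x|^2\bar u\,\partial_{yyyy}u\,dxdy=\int_{\R^2}|x|^2|\partial_{yy}u|^2\,dxdy$, which is also real and drops out. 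Only the $\partial_{xx}$ term survives, and one integration by parts in $x$ yields
$$
\frac{d}{dt}V[u(t)]=4\,\textnormal{Im}\int_{\R^2}x\,\bar u\,\partial_x u\,dxdy.
$$

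For the second derivative I would differentiate this once more and substitute the equation for both $\partial_t u$ and $\partial_t\bar u$. The crucial point, which I would verify explicitly, is that the fourth-order dispersion again contributes nothing: the terms produced by $\partial_{yyyy}$ occur in pairs such as $\int_{\R^2}x\,\partial_{yyyy}\bar u\,\partial_x u\,dxdy$ and $-\int_{\R^2}x\,\bar u\,\partial_{yyyy}\partial_x u\,dxdy$, and since $x$ is independent of $y$, four integrations by parts in $y$ (with vanishing boundary terms) convert the first into the second, so they cancel; the conjugate pair cancels in the same way. What remains is exactly the classical one-dimensional NLS virial computation in $x$, namely the contributions of $\partial_{xx}u$ and $|u|^{p-2}u$, which after the routine integrations by parts in $x$ produce
$$
\frac{d^2}{dt^2}V[u(t)]=8\int_{\R^2}|\partial_x u(t)|^2\,dxdy-\frac{4(p-2)}{p}\int_{\R^2}|u(t)|^p\,dxdy,
$$
as claimed. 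Thus the difficulty concentrates entirely in the decay and integrability bookkeeping of the regularization step that validates every integration by parts, while the dispersive and nonlinear algebra reduces to the well-known one-dimensional identity.
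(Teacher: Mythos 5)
Your proposal is correct and takes essentially the same route as the paper: differentiate $V$ twice, substitute the equation, and integrate by parts, with the nonlinear and fourth-order terms dropping out of the first derivative (both of you arrive at $\frac{d}{dt}V=4\,\mathrm{Im}\int_{\R^2}x\,\bar u\,\partial_x u\,dxdy$) and the $\partial_{yyyy}$ contributions cancelling in the second derivative — the paper lets them surface as $\pm 4\int_{\R^2}|\partial_{yy}u|^2\,dxdy$ before cancelling, while you cancel them pairwise by moving the $y$-derivatives across, which is the same computation organized slightly differently. Your added discussion of regularization/truncation to justify the formal differentiation and integrations by parts is a point of rigor that the paper's proof silently omits.
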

\begin{proof}
First it is simple to compute that
\begin{align*}
\frac{d}{dt} V[u(t)]&=2\Re \int_{\R^2} |x|^2u_t \overline{u} \, dxdy=2 \Im \int_{\R^2} |x|^2 \textnormal{i}u_t \overline{u} \, dxdy\\
&=2 \Im \int_{\R^2} |x|^2 \left(-\partial_{xx} u + \partial_{yyyy} u-|u|^{p-2} u\right)\overline{u} \, dxdy\\
&=4 \Im \int_{\R^2} x \left(\partial_x u \right) \overline{u} \, dxdy.
\end{align*}
Further, we see that
\begin{align*}
\frac{d^2}{dt^2} V[u(t)]&=4 \Im \int_{\R^2} x \left(\partial_x u_t \right) \overline{u} \, dxdy +4 \Im \int_{\R^2} x \left(\partial_x u \right) \overline{u}_t \, dxdy \\
&=-4\Re \int_{\R^2} x \left(\partial_x \textnormal{i} u_t \right) \overline{u} \, dxdy+4 \Re \int_{\R^2} x \left(\partial_x \overline{u} \right) \textnormal{i} u_t \, dxdy \\
&=-4 \Re \int_{\R^2} x \left(\partial_x \left(-\partial_{xx} u + \partial_{yyyy} u-|u|^{p-2} u\right) \right) \overline{u} \, dxdy \\
& \quad +4 \Re \int_{\R^2} x \left(\partial_x \overline{u} \right)\left(-\partial_{xx} u + \partial_{yyyy} u-|u|^{p-2} u\right)  \, dxdy \\
&= 4\int_{\R^2} |\partial_x u|^2 +|\partial_{yy} u|^2 -|u|^{p} \, dxdy + 8 \Re \int_{\R^2} x \left(\partial_x \overline{u} \right)\left(-\partial_{xx} u + \partial_{yyyy} u-|u|^{p-2} u\right)  \, dxdy  \\
&=8\int_{\R^2} |\partial_x u|^2 \, dxdy-\frac{4(p-2)}{p} \int_{\R^2}|u|^p \, dxdy.
\end{align*}
Thus the proof is completed.
\end{proof}
 
\begin{proof} [Proof of Theorem \ref{blowup}]
For simplicity, we shall write $u=u(t)$ as the solution to the Cauchy problem for \eqref{equt} with initial datum $u_0$. Let us first deal with the case that $E(u_0) < 0$ and $p \geq 6$. In this case, by Lemma \ref{virial}, then
\begin{align} \label{sdv0}
\frac{d^2}{dt^2} V[u]=16 E(u)-8 \int_{\R^2} |\partial_{yy} u|^2 \, dxdy-\frac{4(p-6)}{p} \int_{\R^2}|u|^p \, dxdy.
\end{align}
Due to $p \geq 6$, by \eqref{sdv0} and the conservation of energy, then
$$
\frac{d^2}{dt^2} V[u] \leq 16 E(u) =16E(u_0)<0.
$$
This implies that $u$ cannot exist globally in time. Next we consider the case that $u_0 \in \mathcal{B}$ and $p>10$. In the case, with the help of the conservation laws, arguing as the proof in Lemma \ref{invariant}, we are able to show that $J_{\omega}(u)<m_{\omega}$ and $Q(u)<0$. In addition, using Lemma \ref{vc0}, we can also prove that $J_{\omega}(u)<m_{\omega}$ and $K(u)>0$. Consequently, we get that $\mathcal{B}$ is invariant under the flow of the Cauchy problem for \eqref{equt}. In view of Lemma \ref{virial}, then
\begin{align} \label{sdv}
\begin{split}
\frac{d^2}{dt^2} V[u]&=8 Q(u) -8\int_{\R^2} |\partial_{yy} u|^2 \, dxdy +\frac{2(p-2)}{p} \int_{\R^2}|u|^p \, dxdy<8Q(u).
\end{split}
\end{align}
Since $Q(u)<0$, then there exists $0<\tau_u<1$ such that $Q(u_{\tau_u})=0$, where $u_{\tau}:=\tau^{\frac 38} u(\tau^{\frac 12} \cdot, \tau^{\frac 14} \cdot)$ for $\tau>0$. It is easy to calculate that the function $\tau \mapsto J_{\omega}(u_{\tau})$ is concave on $[\tau_u, +\infty)$. Then we have that
$$
J_{\omega} (u_0)=J_{\omega}(u) \geq J_{\omega} (u_{\tau_{u}}) + (1-\tau_{u}) \frac{d}{d\tau} J_{\omega}(u_{\tau}) \mid_{\tau=1} \geq J_{\omega}(u_{\omega}) +(1-\tau_{u}) Q(u) >J_{\omega}(u_{\omega}) +Q(u),
$$
from which there holds that $Q(u) <J_{\omega}(u_0) -m_{\omega}<0$. Going back to \eqref{sdv}, we then have that $u$ cannot exist globally in time. This completes the proof.
\end{proof}

\subsection*{Data Availability}
 Data sharing is not applicable to this article as no data sets were generated or analysed during the current study.

\end{document}